
\documentclass{amsart}

\usepackage{amsmath,amssymb,amsthm}

\usepackage{pinlabel}

\hyphenation{mani-fold mani-folds sub-mani-fold sub-mani-folds topo-logy
Topo-logy geo-metry Geo-metry ana-lo-gous ana-lo-gous-ly An-omaly
se-parate mathe-ma-ti-schen}

\newtheorem{prop}{Proposition}[section]
\newtheorem{thm}[prop]{Theorem}
\newtheorem{lem}[prop]{Lemma}

\theoremstyle{definition}

\newtheorem{defn}[prop]{Definition}
\newtheorem{ex}[prop]{Example}

\newtheorem{rem}[prop]{Remark}

\newtheorem*{ack}{Acknowledgements}


\def\co{\colon\thinspace}

\newcommand{\C}{\mathbb{C}}
\newcommand{\CP}{\mathbb{C}\mathrm{P}}

\newcommand{\rmd}{\mathrm{d}}

\newcommand{\rme}{\mathrm{e}}

\newcommand{\bbH}{\mathbb{H}}

\newcommand{\rmi}{\mathrm{i}}

\newcommand{\rmj}{\mathrm{j}}

\newcommand{\rmk}{\mathrm{k}}
\newcommand{\calK}{\mathcal{K}}

\newcommand{\omu}{\overline{\mu}}

\newcommand{\N}{\mathbb{N}}

\newcommand{\PD}{\mathrm{PD}}

\newcommand{\R}{\mathbb{R}}
\newcommand{\RP}{\mathbb{R}\mathrm{P}}

\newcommand{\SL}{\mathrm{SL}}

\newcommand{\ttt}{\mathtt{t}}

\newcommand{\oz}{\overline{z}}
\newcommand{\Z}{\mathbb{Z}}

\newcommand{\xist}{\xi_{\mathrm{st}}}

\DeclareMathOperator{\Int}{Int}
\DeclareMathOperator{\Crit}{Crit}
\DeclareMathOperator{\sign}{sign}


\begin{document}

\author[H.~Geiges]{Hansj\"org Geiges}
\address{Mathematisches Institut, Universit\"at zu K\"oln,
Weyertal 86--90, 50931 K\"oln, Germany}
\email{geiges@math.uni-koeln.de}
\author[J.~Hedicke]{Jakob Hedicke}
\address{Centre de Recherches Math\'ematiques, Universit\'e de Montr\'eal,
Pavillon Andr\'e-Aisenstadt, 2920 Chemin de la tour, Montr\'eal H3T 1J4,
Canada}
\email{jakob.hedicke@gmail.com}
\author[M.~Sa\u{g}lam]{Murat Sa\u{g}lam}
\address{Mathematisches Institut, Universit\"at zu K\"oln,
Weyertal 86--90, 50931 K\"oln, Germany}
\email{msaglam@math.uni-koeln.de}

\title[Bott-integrable Reeb flows]{Bott-integrable Reeb flows on $3$-manifolds}

\date{}

\begin{abstract}
This paper is devoted to studying a notion of Bott integrability for Reeb
flows on contact $3$-manifolds. We show, in analogy with work of
Fomenko--Zieschang on Hamiltonian flows in dimension~$4$, that Bott-integrable
Reeb flows exist precisely on graph manifolds. We also show that
all $S^1$-invariant contact structures on Seifert manifolds, as well
as all contact structures on the $3$-sphere, on the $3$-torus, and
on $S^1\times S^2$, admit
Bott-integrable Reeb flows. Along the way, we establish some
general Liouville-type theorems for Bott-integrable Reeb flows,
and a number of topological constructions (connected sum,
open books, Dehn surgery) that may be expected to have wider
applications.
\end{abstract}

\subjclass[2020]{37J35, 37J55, 53D35, 57K33}

\keywords{Reeb dynamics, Bott integrability, graph manifold, contact structure}

\maketitle


\section{Introduction}
It has been said that `a precise definition of the classical concept
of a completely integrable system is often elusive.'~\cite{webs03}.
This dictum applies to the symplectic setting, but it may be even more
apposite in the context of various notions of `contact integrability'
that have been proposed and studied. We provide an overview of
the literature at the end of this introduction.
\subsection{Bott integrability}
In dimension~$3$, fortunately, a favourite definition
of contact integrability suggests itself.
Let $\alpha$ be a positive contact form on a closed, oriented
$3$-manifold, that is, $\alpha\wedge\rmd\alpha>0$. We write
$R=R_{\alpha}$ for its Reeb vector field, defined by
$i_R\rmd\alpha=0$ and $\alpha(R)=1$. The
$\R$-invariant extension of $R$ to $\R\times M$ coincides with
the Hamiltonian vector field $X_H$ of the Hamiltonian
function $H\co \R\times M\rightarrow\R$, $(t,p)\mapsto \rme^t$
on the symplectisation $\bigl(\R\times M,\omega:=\rmd(\rme^t\alpha)\bigr)$.

It is therefore natural to define integrability (in the sense
of Arnold--Liouville) of the Reeb flow on $M$ in terms of the integrability
of the flow of $X_H$ on $\R\times M$. In the present paper we
restrict attention to the situation where the integrals of the
Hamiltonian flow are Morse--Bott functions. This leads,
as we shall see in Section~\ref{section:bott-int},
to the following definition.

\begin{defn}
\label{defn:reeb-bott-int}
The Reeb flow of $(M,\alpha)$ is called \textbf{Bott integrable}
if there is a Morse--Bott function $f\co M\rightarrow\R$
invariant under the Reeb flow, that is, $\rmd f(R)=0$.
The function $f$ is called a \textbf{Bott integral} of~$R$.
\end{defn}

Recall that a Morse--Bott function is a smooth function $f$ whose set
$\Crit(f):=\{p\in M\co\rmd_pf=0\}$ of critical points is a
submanifold (with components of various positive codimensions), and such
that the Hessian of $f$ is non-degenerate
in transverse directions along $\Crit(f)$.

\begin{rem}
We follow the well-established hyphenation rules as
in well-defined vs.\ well defined, i.e.\ we say `the flow
is Bott integrable', but speak of `a Bott-integrable flow'.
\end{rem}
\subsection{The main results}
Our first question about Bott-integrable Reeb flows is of topological nature:
which $3$-manifolds admit such a flow? (Without the
integrability condition, every $3$-manifold can arise.)
The answer is provided by the
following theorem. The notion of graph manifolds is due to
Waldhausen~\cite{wald67I,wald67II};
this is the class of $3$-manifolds that can be cut along tori
into Seifert fibred pieces. See Section~\ref{subsection:graph-manifold}
for more details.

\begin{thm}
\label{thm:bott-graph}
A closed, oriented $3$-manifold admits a Bott-integrable Reeb flow
if and only if it is a graph manifold.
\end{thm}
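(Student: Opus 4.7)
The plan is to follow the analogy with Fomenko--Zieschang's classification of Bott-integrable Hamiltonian systems in dimension four. Suppose $(M,\alpha)$ admits a Bott integral $f$. Since $R$ is nowhere zero and $\rmd f(R)=0$, each connected component of $\Crit(f)$ is a closed, Reeb-invariant submanifold of positive dimension, and the Morse--Bott hypothesis forces each such component to have dimension $1$ or~$2$. A $1$-dimensional component is a closed Reeb orbit, while a $2$-dimensional component $\Sigma$ carries a nowhere vanishing tangent vector field~$R$, so $\chi(\Sigma)=0$, and $\Sigma$ is therefore a torus or a Klein bottle. The regular level sets of $f$ are likewise Reeb-invariant closed surfaces of Euler characteristic zero, and being cooriented by~$\rmd f$ they are orientable, hence tori. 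Applying the Morse--Bott normal form transverse to each critical component then produces the local models familiar from Fomenko--Zieschang: solid tori around elliptic critical orbits, saddle pieces of the form pair-of-pants $\times\, S^1$ (or a twisted variant) around hyperbolic critical orbits, and (possibly twisted) disk bundles over tori or Klein bottles around the $2$-dimensional critical components. Each of these pieces is Seifert fibred, they are glued along regular level tori, and $M$ is thereby exhibited as a graph manifold.

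\textbf{Sufficiency.} For the converse I invoke the paper's other main result: every $S^1$-invariant contact structure on a Seifert manifold admits a Bott-integrable Reeb flow. This provides Bott-integrable Reeb flows on the Seifert-fibred building blocks of any graph manifold. The paper's topological constructions (contact connected sum, open books, contact Dehn surgery) should then allow us to realise the Waldhausen gluings along boundary tori. Concretely, one arranges the contact form on each block to be $T^2$-invariant in a collar of each boundary torus, with the Bott integral depending only on the collar coordinate, and then glues two such collars via a $T^2\times[0,1]$ cobordism in which both the contact form and the Bott integral interpolate between the boundary data.

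\textbf{Main obstacle.} The principal difficulty is the sufficiency direction: one must perform the torus gluings in such a way that a globally defined Morse--Bott integral of the Reeb flow survives, not only the contact structure. While the contact-geometric gluing on its own is manageable, the additional constraint of maintaining a Bott integral severely restricts the permissible Reeb dynamics near each collar, and the bulk of the work lies in showing that these constraints can be satisfied for arbitrary Waldhausen gluing data in $\SL(2,\Z)$.
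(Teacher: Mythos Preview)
Your necessity sketch is fine and matches the paper, which simply observes that a Bott-integrable Reeb flow is \emph{a fortiori} a Bott-integrable Hamiltonian flow on an energy hypersurface, so the Fomenko--Zieschang result applies directly.

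Your sufficiency argument, however, has a genuine gap and also diverges from the paper's route. You invoke Theorem~\ref{thm:seifert}, but that result concerns \emph{closed} Seifert manifolds equipped with a \emph{given} $S^1$-invariant contact structure; it says nothing about Seifert pieces with torus boundary, nor does it produce contact forms with prescribed behaviour near~$\partial$. So as stated it cannot be used to furnish the building blocks. Likewise, the connected sum, open book, and Dehn surgery constructions you cite are developed in the paper for Theorems~\ref{thm:seifert} and~\ref{thm:S3T3}, not for Theorem~\ref{thm:bott-graph}; none of them is the right tool for realising an arbitrary Waldhausen torus gluing.

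The paper's actual argument is both simpler and more direct. Rather than the full Seifert decomposition, it uses the $(A,B)$-decomposition (Proposition~\ref{prop:graph-AB}): every graph manifold is assembled from copies of $A=S^1\times D^2$ and $B=S^1\times(\text{pair of pants})$ along torus boundaries. On each of these two model pieces one writes down an explicit contact form of the shape $\rmd\theta+\lambda$ (with $\lambda$ an exact area primitive on the base surface), which is automatically a Lutz form $h_1(r)\,\rmd x_1+h_2(r)\,\rmd x_2$ near every boundary torus, with Reeb vector field $\partial_\theta$ and Bott integral the height function on the base. The gluing step you flag as the ``main obstacle'' is then handled by the sewing lemma (Lemma~\ref{lem:sewing}): any two Lutz forms on the two collars of a thickened torus can be interpolated by a Lutz form on the whole of $[-1,1]\times T^2$, simply because a planar curve satisfying $\Delta<0$ can be extended to connect any two such arcs. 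The Bott integral extends as a function of $r$ alone, possibly acquiring a single critical torus. So the step you identify as the heart of the difficulty is in fact almost trivial once one commits to Lutz forms; the real content lies in choosing the right decomposition and the right model contact forms on the pieces.
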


For Hamiltonian flows in dimension~$4$, this characterisation of the closed,
orientable $3$-manifolds that can arise as energy hypersurfaces of
a Bott-integrable Hamiltonian flow has been established by
Fomenko and Zieschang~\cite{fozi87}. For a comprehensive account
see the monograph by Bolsinov and Fomenko~\cite{bofo04}.
An alternative characterisation
in terms of manifolds admitting a decomposition along tori
into only two simple types of building blocks (see
Proposition~\ref{prop:graph-AB}) was given by Brailov and
Fomenko~\cite{brfo89}. \emph{A fortiori}, the `only if'
part of the theorem follows.
Our task, then, will be to establish the `if' statement;
this will be completed in Section~\ref{subsection:thm1}.

The next step is to ask if the subclass of integrable Reeb flows
is `large' within the class of integrable Hamiltonian
flows. Here it is reasonable to talk about Hamiltonian
flows on an energy hypersurface up to \emph{Liouville equivalence},
i.e.\ up to a diffeomorphism that preserves the (singular)
\emph{Liouville foliation} made up of the level sets of the Bott integral.
We restrict our attention to topologically stable integrable systems,
i.e.\ flows whose Liouville foliation does not change up to
diffeomorphism under small perturbations of the system.
In particular, such systems only have isolated periodic orbits as critical
levels of the Bott integral. For these topologically stable
flows the classification up to Liouville equivalence is described
in~\cite{bofo04}. The next theorem, which will be proved
in Section~\ref{subsection:liouville-eq}, says that each equivalence class
actually contains an integrable Reeb flow.

\begin{thm}
\label{thm:liouville-eq}
Any topologically stable flow on a closed $3$-dimensional energy hypersurface
of a $4$-dimensional Hamiltonian system admitting a Bott integral
is Liouville equivalent to a Bott-integrable Reeb flow.
\end{thm}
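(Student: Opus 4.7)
The plan is to leverage the Fomenko--Zieschang classification of topologically stable Bott-integrable Hamiltonian flows on closed $3$-dimensional energy hypersurfaces. To each such flow one associates a \emph{marked molecule}: a graph whose vertices are the \emph{atoms} (neighbourhoods of the singular level sets of the Bott integral, each a regular neighbourhood of one or several critical circles), whose edges correspond to one-parameter families of regular Liouville tori, and which carries rational rotation marks $(r,\varepsilon)$ on each edge together with integer $n$-marks on certain subgraphs; see~\cite{bofo04}. Two topologically stable flows are Liouville equivalent precisely when their marked molecules coincide, so it suffices to realise every marked molecule that arises from a smooth Hamiltonian system by a Bott-integrable Reeb flow.

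First, for each atom appearing in the Fomenko--Zieschang list I would produce a local contact model: a compact contact $3$-manifold with toroidal boundary, equipped with a Reeb flow that admits a Bott integral whose singular level realises the given atom. For the elliptic atom this is the standard neighbourhood of a non-degenerate elliptic closed Reeb orbit, with the squared transverse radius as Bott integral. For the saddle atoms the models are assembled from the open-book and Seifert-type building blocks that already appear in the proof of Theorem~\ref{thm:bott-graph}, with the Bott integral inherited from the $S^1$-action on each Seifert piece. In each model one has explicit control over the Reeb flow on every boundary torus.

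Second, I would glue these local pieces along the edges of the molecule by $T^2$-invariant contact collars. On such a collar a Bott-integrable Reeb flow is specified by a pair of angular frequencies, and varying them allows one to realise any prescribed rotation mark $r\in\mathbb{Q}/\mathbb{Z}$ and orientation mark $\varepsilon=\pm 1$ on the corresponding edge. The integer $n$-mark on a saddle subgraph is produced by inserting the appropriate number of Dehn twists into the gluing, which at the contact level is carried out by the contact Dehn-surgery construction developed earlier in the paper. The result is a closed contact $3$-manifold carrying a Bott-integrable Reeb flow whose marked molecule is, by construction, the given one.

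The main obstacle lies in this last step: one must check that contact-geometric rigidity on $T^2\times I$ does not obstruct the realisation of arbitrary Fomenko--Zieschang marks. Concretely, this requires a careful bookkeeping of the Reeb rotation numbers on the boundaries of the local models, together with a verification that every combination of $(r,\varepsilon,n)$ admitted by the combinatorial classification is attainable via admissible contact gluings of the form above. Once this compatibility is established, the Fomenko--Zieschang theorem identifies the Liouville foliation of the resulting Reeb flow with that of the original Hamiltonian system, proving Liouville equivalence.
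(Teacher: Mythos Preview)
Your approach routes through the Fomenko--Zieschang classification by marked molecules, whereas the paper's proof is more direct: it works with the building-block decomposition of the Liouville foliation itself rather than with its invariants. The paper observes (Section~\ref{subsubsection:general}) that any topologically stable Bott-integrable system decomposes along regular tori into Seifert-fibred pieces of the form $S^1\times\Sigma$ or mapping tori $M_{\tau}$ of involutions, each carrying the given Liouville foliation as the level sets of a Morse function on the base surface. One equips each such piece with a contact form $\rmd\theta+\lambda$ via Lemma~\ref{lem:exact-area}, whose Reeb vector field is $\partial_{\theta}$ and hence tangent to the prescribed Liouville tori, and then glues the pieces back together using the \emph{same} torus identifications as in the original system. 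The sewing lemma (Lemma~\ref{lem:sewing}) guarantees that any two Lutz forms on the respective collars can be interpolated over a thickened torus, so there is no obstruction to the gluing, and the resulting contact manifold carries the original Liouville foliation by construction.

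This is exactly where your proposal stalls. You correctly flag the gluing as the crux, but you frame it as a problem of realising the $(r,\varepsilon,n)$-marks by tuning ``Reeb rotation numbers'' and ``angular frequencies'', and you worry about ``contact-geometric rigidity on $T^2\times I$''. Two things go wrong here. First, there is no such rigidity: the sewing lemma is entirely soft and accepts arbitrary boundary data, so the obstacle you anticipate does not exist. Second, the Fomenko--Zieschang marks encode the \emph{topological} gluing between admissible cycle bases on adjacent boundary tori, not dynamical rotation numbers; Liouville equivalence concerns only the singular foliation, so the Reeb frequencies on the regular tori are irrelevant and need not be matched. Once one sees this, your detour through the marked-molecule invariants becomes unnecessary: one simply reproduces the original gluing maps verbatim, interpolates the contact forms via Lutz curves, and reads off Liouville equivalence from the fact that the underlying foliated $3$-manifold has not changed.
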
 

\begin{rem}
For Euler flows and Reeb flows of stable Hamiltonian structures,
results analogous to our Theorems \ref{thm:bott-graph}
and~\ref{thm:liouville-eq} have been established by Cardona~\cite{card22}.
\end{rem}

Another sense in which integrable Reeb flows might or might not
be abundant concerns the class of contact structures that can
be realised. It is understood throughout
that we are dealing with \emph{positive}
contact structures on \emph{oriented} $3$-manifolds; observe that
in dimension $3$ the sign of the volume form $\alpha\wedge\rmd\alpha$
of any contact form $\alpha$ defining a given contact structure
$\xi=\ker\alpha$ is independent of the choice of~$\alpha$.

\begin{defn}
A positive contact structure $\xi$ on a closed, oriented $3$-manifold
is said to \textbf{admit a Bott-integrable Reeb flow} if there is a pair
$(\alpha,f)$ consisting of a contact form $\alpha$ defining
$\xi=\ker\alpha$, and a Bott integral $f$ of $R_{\alpha}$.
\end{defn}

Here we meet our first non-existence statement. By the work
of Macarini and Schlenk~\cite{masc11}, the canonical contact structure
on the unit cotangent bundle of a closed, oriented surface of genus at
least~$2$ admits Reeb flows of positive topological entropy only.
On the other hand, Paternain~\cite{pate91} has shown that if
the Hamiltonian flow on a $3$-dimensional energy hypersurface
of a $4$-dimensional integrable system has the property that
the critical levels of the integral constitute submanifolds --- which is
certainly satisfied in the Bott-integrable case ---, then the topological
entropy of this $3$-dimensional flow is zero. In fact, this connection with
systems of zero entropy is one of the motivations for looking at
Bott-integrable Reeb flows. By combining \cite{masc11} and \cite{pate91},
one obtains the following result.

\begin{prop}
\label{prop:no-bott}
The canonical contact structure on the unit cotangent bundle of a
closed, oriented surface of
genus at least~$2$ does not admit a Bott-integrable Reeb flow.\qed
\end{prop}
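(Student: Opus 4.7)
The plan is to argue by contradiction and chain together the two results cited in the paragraph preceding the proposition. Let $\Sigma_g$ be a closed, oriented surface of genus $g\geq 2$, and let $\xist$ denote the canonical contact structure on its unit cotangent bundle. Suppose, for the sake of contradiction, that $\xist$ admits a Bott-integrable Reeb flow, i.e.\ there exists a contact form $\alpha$ with $\ker\alpha=\xist$ together with a Bott integral $f\co T_1^*\Sigma_g\rightarrow\R$ for the Reeb field $R_{\alpha}$.

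First, I would verify that this Reeb flow fits into the framework of Paternain's theorem. The Reeb flow of $\alpha$ is conjugate to the Hamiltonian flow of $H(t,p)=\rme^t$ on the symplectisation $(\R\times T_1^*\Sigma_g,\rmd(\rme^t\alpha))$ restricted to the energy hypersurface $\{t=0\}$, and the $\R$-invariant extension of $f$ produces an independent first integral for this Hamiltonian system. Since $f$ is Morse--Bott, the critical set $\Crit(f)$ is a submanifold of $T_1^*\Sigma_g$, so the levels of $f$ (in particular the critical ones) are unions of submanifolds. This is precisely the hypothesis under which Paternain's result from~\cite{pate91} applies, and it yields that the topological entropy of the Reeb flow of $\alpha$ vanishes.

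On the other hand, by Macarini and Schlenk~\cite{masc11}, every Reeb flow for the canonical contact structure $\xist$ on $T_1^*\Sigma_g$ with $g\geq 2$ has strictly positive topological entropy. Applied to the particular contact form $\alpha$, this contradicts the conclusion drawn from Paternain's theorem, completing the proof.

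The argument is conceptually straightforward once the two external inputs are in place; no real obstacle arises since the definition of Bott integrability (Definition~\ref{defn:reeb-bott-int}) immediately guarantees the submanifold hypothesis needed for~\cite{pate91}, and the conclusion of~\cite{masc11} is contact-invariant, so it applies to any contact form defining $\xist$. The only point requiring minor care is the verification that the Bott integral on $M$ gives rise to a genuine additional integral on the symplectisation, independent of $H$, so that the hypothesis of~\cite{pate91} on the integrability of the four-dimensional Hamiltonian system is literally satisfied; this is immediate from the $\R$-invariant extension described in the introduction.
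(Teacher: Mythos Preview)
Your proposal is correct and follows exactly the argument the paper gives in the paragraph preceding the proposition: combine Paternain's zero-entropy result for Bott-integrable systems with the positive-entropy result of Macarini--Schlenk for the canonical contact structure on $ST^*\Sigma_g$, $g\geq 2$. The only addition is that you spell out why the Reeb flow fits into Paternain's Hamiltonian framework via the symplectisation, which the paper leaves implicit.
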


We collect other non-existence statements of this kind in
Section~\ref{subsection:non-exist}.

By contrast, we can prove several results showing that
Bott-integrable Reeb flows are far from scarce. Here is a simple
statement concerning a whole class of contact structures, which
will be proved in Section~\ref{section:S1invariant}. For classification
results concerning the contact structures
in question see \cite{lutz77,gego95,giro01,kela21}.

\begin{thm}
\label{thm:seifert}
Let $M$ be a closed, oriented $3$-manifold with a fixed-point
free $S^1$-action (in other words, $M$ is a Seifert manifold
with oriented fibres and base)
and a contact structure $\xi$ invariant under the
$S^1$-action. Then $\xi$ admits a Bott-integrable Reeb flow.
\end{thm}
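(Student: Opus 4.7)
The plan is to use the contact Hamiltonian of the $S^1$-action as the candidate Bott integral. Starting from any contact form defining $\xi$, I average over the $S^1$-action to obtain an $S^1$-invariant contact form $\alpha$, and let $K$ denote the infinitesimal generator of the action. I then set $f := \alpha(K)$. That $f$ is automatically a first integral of $R_\alpha$ follows in one line: since $L_K\alpha = 0$, Cartan's formula gives $df = -i_K\, \rmd\alpha$, and hence $df(R_\alpha) = -\rmd\alpha(K, R_\alpha) = 0$ because $i_{R_\alpha}\rmd\alpha = 0$. Being $S^1$-invariant, $f$ descends to a smooth function $\bar f$ on the base orbifold $\Sigma = M/S^1$, and $f$ is Morse--Bott on $M$ (necessarily with critical set a union of fibres) exactly when $\bar f$ is Morse in the orbifold sense on $\Sigma$.

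To arrange the Morse--Bott condition I would exploit the freedom $\alpha \mapsto \lambda\alpha$ for any positive $S^1$-invariant $\lambda$: this is again a contact form for $\xi$, and $\lambda f$ is a first integral of its Reeb field. The task thus reduces to finding $\bar\lambda > 0$ on $\Sigma$ such that $\bar\lambda\bar f$ is orbifold Morse. I first analyse the zero locus $\bar Z := \{\bar f = 0\}$, which corresponds to $K \in \xi$: using $df = -i_K\rmd\alpha$ together with the non-degeneracy of $\rmd\alpha|_\xi$ and $K_p \neq 0$, one sees that $d\bar f$ is nowhere zero on $\bar Z$. Consequently $\bar Z$ is a smooth curve in $\Sigma$ that avoids the orbifold singularities (at which any smooth cotangent vector must vanish by $\Z_q$-invariance for $q \ge 2$), and $\bar\lambda\bar f$ has no critical points in a neighbourhood of $\bar Z$ regardless of $\bar\lambda > 0$. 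In the special case $\bar Z = \emptyset$ (where $K$ is everywhere transverse to $\xi$) one can bypass all further work by taking $\bar\lambda = \bar g/\bar f$ for any positive orbifold Morse function $\bar g$ on $\Sigma$, for then $\bar\lambda\bar f = \bar g$.

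The main obstacle is the general case: showing that a generic positive $\bar\lambda$ makes $\bar\lambda\bar f$ Morse on $\Sigma\setminus\bar Z$ and non-degenerate at each orbifold singularity. At a would-be critical point of $\bar\lambda\bar f$ on the smooth part of $\Sigma$, the relation $\rmd\bar\lambda = -(\bar\lambda/\bar f)\,\rmd\bar f$ pins down only the first-order jet of $\bar\lambda$, so its 2-jet remains free and a transversality argument shows the Hessian can be made non-degenerate by a $C^2$-small perturbation. At an orbifold singularity the situation is opposite but easier: such a point is automatically a critical point of $\bar\lambda\bar f$, and the Hessian is forced to be a scalar multiple of the flat metric, so one only has to ensure that scalar is non-zero, which is again an open dense condition on $\bar\lambda$. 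Patching these local perturbations into a single positive $\bar\lambda$ on $\Sigma$ is routine, and produces the required Bott-integrable Reeb flow.
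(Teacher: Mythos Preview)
Your proof is correct and follows the same strategy as the paper: both average to an $S^1$-invariant form $\alpha$, take the moment map $u=\alpha(K)$ as the candidate integral, show that $\{u=0\}$ is a regular level set missing the singular fibres, and then rescale $\alpha$ by a positive invariant function so that the new moment map becomes Morse--Bott. The only real difference is in that last step --- the paper builds the target Morse function on the base explicitly (prescribing $\pm c\mp r^2$ near each singular fibre and extending by hand over each component of $\Sigma\setminus\bar Z$, then setting $\lambda=f/u$), whereas you appeal to genericity of~$\bar\lambda$; both routes work equally well, the paper's being slightly more constructive. One small slip: your claim that at an orbifold point of order $q$ the Hessian of an invariant function is forced to be a scalar multiple of the flat metric holds only for $q\geq 3$; for $q=2$ the isotropy acts by $-\mathrm{id}$ on the transverse plane and every quadratic form is invariant --- but this only makes non-degeneracy easier to arrange, so your conclusion stands.
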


Another approach is to study the existence of Bott-integrable
Reeb flows on $3$-manifolds
for which the classification of contact structures is known.
Here we consider the $3$-sphere, the $3$-torus,
and $S^1\times S^2$.

\begin{thm}
\label{thm:S3T3}
Every contact structure on the $3$-sphere $S^3$, on the $3$-torus $T^3$,
and on $S^1\times S^2$, admits a Bott-integrable Reeb flow.
\end{thm}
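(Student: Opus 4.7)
The plan is to reduce every case to an $S^1$-invariant contact structure and then apply Theorem~\ref{thm:seifert}. Each of $S^3$, $T^3$, and $S^1\times S^2$ carries a fixed-point free $S^1$-action with oriented base and fibres: $S^3$ via the Hopf fibration over $S^2$, $S^1\times S^2$ as a trivial $S^1$-bundle over $S^2$, and $T^3$ as a trivial $S^1$-bundle over~$T^2$.

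First I would dispose of the tight case. On $S^3$ the unique (up to isotopy) tight contact structure $\xist$ is invariant under the Hopf $S^1$-action. On $T^3$ the tight contact structures, classified by Kanda, are represented by the $T^2$-invariant forms $\xi_n = \ker\bigl(\cos(nz)\,\rmd x + \sin(nz)\,\rmd y\bigr)$ and are in particular $S^1$-invariant. On $S^1\times S^2$ the unique tight contact structure (Eliashberg) is invariant under rotation of the $S^1$-factor. Theorem~\ref{thm:seifert} thus yields Bott-integrable Reeb flows in all tight cases simultaneously.

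For overtwisted contact structures I would invoke Eliashberg's classification: any two overtwisted contact structures in the same homotopy class of $2$-plane fields are isotopic. So it suffices to realise every homotopy class of plane field on each of the three manifolds by an $S^1$-invariant contact structure, and then apply Theorem~\ref{thm:seifert}. I would do this by performing Lutz twists on one of the tight $S^1$-invariant structures above along a transverse link consisting of a disjoint union of regular Seifert fibres. Such a link is $S^1$-invariant as a set, and the Lutz twist can be carried out equivariantly in an $S^1$-invariant tubular neighbourhood, so the resulting contact structure remains $S^1$-invariant (and is overtwisted once enough twisting has been performed).

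The main obstacle is the homotopy-theoretic surjectivity step: showing that Lutz twists along $S^1$-invariant transverse links realise every homotopy class of $2$-plane field on each of the three manifolds. For $S^3$, where these classes form a $\Z$-torsor (the Hopf-type invariant), this reduces to the standard calculation that a single Lutz twist along a Hopf fibre shifts the invariant by a fixed nonzero amount. For $S^1\times S^2$ and $T^3$ the homotopy classification involves additional data related to the Euler class of $\xi$, and one needs to combine twists along fibres sitting over different points of the base while keeping careful track of their cumulative effect. Alternatively, one may appeal directly to Lutz's construction of $S^1$-invariant contact structures on Seifert manifolds, which already produces representatives of every homotopy class in each of the three cases considered here, bypassing the case-by-case computation.
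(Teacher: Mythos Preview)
Your reduction to $S^1$-invariant contact structures has a genuine gap in the case of~$S^3$: not every homotopy class of tangent $2$-plane field on $S^3$ contains an $S^1$-invariant representative. The paper proves exactly this obstruction in Proposition~\ref{prop:lutz}: if $\sigma$ is a nowhere vanishing $S^1$-invariant $1$-form on $S^3$ (with respect to the Hopf action), then its Hopf invariant $H(\sigma)$ is a perfect square, because the map $\mu\co S^3\to S^2$ describing $\sigma$ in an invariant frame factors through the Hopf projection and satisfies $H(\mu)=(\deg\omu)^2$. Since every free $S^1$-action on $S^3$ is conjugate to the Hopf action, there is no way around this by changing the action. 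Your fallback appeal to Lutz is also unsafe: the paper explicitly remarks that the passages in \cite{lutz77} suggesting that all homotopy classes are realised $S^1$-invariantly are in error.

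This is precisely why the paper develops the contact connected sum machinery (Theorem~\ref{thm:cont-connect}): starting from the $S^1$-invariant structures with $H'=k(k-2)$, one uses connected sums --- which are \emph{not} $S^1$-equivariant --- to fill in the missing values of the Hopf invariant via the additivity in Lemma~\ref{lem:connectH'}. The same connected sum trick is then reused for $T^3$ and $S^1\times S^2$ to adjust the $d_3$-invariant freely once the Euler class has been realised. So even if your approach could be pushed through for $T^3$ and $S^1\times S^2$ (where no square-type obstruction is stated), you would still need an argument that Lutz twists along fibres alone reach every value of the $d_3$-invariant, and you have not supplied one; the paper sidesteps this by importing the full $\Z$ worth of overtwisted structures on $S^3$ via connected sum.
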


For the $3$-sphere, the proof of this theorem will be completed in
Section~\ref{subsection:S3}; for the $3$-torus, in Section~\ref{section:T3};
for $S^1\times S^2$, in Section~\ref{section:S1S2}.
Along the way, we develop topological constructions for integrable
Reeb flows, such as connected sums, gluing along torus boundaries,
or methods related to open books, all of which should prove useful in
wider contexts.

The arguments we use to prove Theorem~\ref{thm:S3T3} can be applied
to other manifolds. For instance, $\RP^3$
admits a unique tight contact structure, and then one
argues as in the proof for $S^1\times S^2$ that every contact structure
admits a Bott-integrable Reeb flow. This reasoning also yields partial
results about the contact structures on lens spaces admitting
Bott-integrable Reeb flows. We plan to address this
systematically in a future publication.

Together with the result of Paternain mentioned before
Proposition~\ref{prop:no-bott}, Theorem~\ref{thm:S3T3} shows that every
contact structure on $S^3$ admits a contact form whose Reeb flow
has zero topological entropy; this answers a question raised
by C\^ot\'e~\cite{cote20}.

Finally, in Section~\ref{section:klein} we present in some detail
examples of Bott-integrable Reeb flows where the critical set
of the Bott integral contains a Klein bottle. This is a rare and
non-generic phenomenon, and we describe how, by a small perturbation
of the Morse--Bott function, one can obtain a function having
only isolated critical Reeb orbits. In the $4$-dimensional
Hamiltonian setting, such genericity and perturbation
results have been obtained by Kalashnikov~\cite{kala95}.
\subsection{Further non-existence statements}
\label{subsection:non-exist}
In this section we collect further examples of graph manifolds carrying
contact structures that do not admit Bott-integrable Reeb flows,
and we comment on the situation in higher dimensions.

In \cite{foha13}, Foulon and Hasselblatt describe Anosov Reeb flows
on the Handel--Thurston manifolds~\cite{hath80}. These are
graph manifolds, and `most' of them are non-trivial, in the sense
that they are not finitely covered
by a Seifert fibred manifold.
The contact structures supporting these Anosov Reeb flows do not
admit Bott-integrable Reeb flows. This follows from
the work of Alves~\cite[Corollary~3]{alve16-pos}
(in combination with the result
of Paternain~\cite{pate91} cited earlier);
Alves shows that if a contact structure on a closed $3$-manifold admits
an Anosov Reeb flow, then all Reeb flows of this contact structure
have positive topological entropy.

The Handel--Thurston manifolds are obtained by a surgery construction,
and Foulon--Hasselblatt showed that these surgeries can be
performed as contact Dehn surgeries.
These ideas have been expanded in \cite{alve16-cyl} and
\cite{fhv21}, where contact homology is used to analyse the
complexity of the Reeb flows resulting from the surgery.

Finally, a few words about the situation in higher dimensions.
There is a notion of non-degeneracy of completely integrable
Hamiltonian flows in all dimensions \cite[Definition~2.1]{pate94},
which in the $3$-dimensional Reeb case translates into
the existence of a Bott integral whose critical set
consists exclusively of periodic orbits. For instance, the examples
of Bott-integrable Reeb flows we construct on the $3$-sphere
when we prove Theorem~\ref{thm:S3T3} are of this type.

Reeb flows in higher
dimensions that are completely integrable with non-degen\-erate
first integrals in the sense just mentioned have zero topological
entropy \cite[Theorem~2.2]{pate94}. On the other hand,
on all spheres of dimension $2n+1\geq 5$ there are contact structures 
all of whose Reeb flows have positive topological entropy;
in dimensions $\geq 7$ this was shown by Alves and Meiwes~\cite{alme19},
and in dimension $5$ by C\^ot\'e~\cite{cote20}. Thus, at least under
this non-degeneracy assumption, an analogue
of Theorem~\ref{thm:S3T3} does not hold for higher-dimensional
spheres.
\subsection{The literature on contact integrability}
A comprehensive survey of the literature on contact
integrability can be found in \cite[Section~3.5]{khta10}.
Most of these studies are concerned with more restrictive
notions of contact integrability, or with higher-dimensional
phenomena.

For instance, some of the earliest work in the field,
by Banyaga and Molino~\cite{bamo93}, deals with
completely integrable contact forms of toric type.
In dimension~$3$, such contact toric manifolds have
been classified by Lerman~\cite{lerm03}. See
also~\cite{boye11} for a general discussion.

Miranda~\cite{mira14} gives a nice unified approach to
integrable systems in symplectic, Poisson and contact manifolds;
in the contact case she assumes that the Reeb vector field generates an
$S^1$-action.

Some other recent papers on the subject of contact integrability
are \cite{jova12,jojo15,visi17}.
\section{Bott integrability}
\label{section:bott-int}
We begin by motivating the definition of integrability for
Reeb flows. We then establish some basic
properties of Bott-integrable Reeb flows, including
the analogue of Liouville's theorem in Hamiltonian dynamics
(for which we give a direct $3$-dimensional proof),
and a neighbourhood theorem for critical Reeb orbits.
\subsection{Motivating the definition}
Consider the symplectisation
\[ \bigl(\R\times M,\omega:=\rmd(\rme^t\alpha)\bigr) \]
of $(M,\alpha)$, where $M$ is a closed, oriented $3$-manifold
and $\alpha$ a positive contact form on~$M$. Our sign convention
for defining the Hamiltonian vector field $X_H$ of a smooth function
$H\co \R\times M\rightarrow\R$ is to require
\[ -\rmd H=\omega(X_H,\,.\,).\]
Thus, for $H(t,p)=\rme^t$ we have $X_H=R_{\alpha}=:R$, since
\[ -\rme^t\,\rmd t=i_R\rme^t(\rmd t\wedge\alpha+\rmd\alpha).\]
We identify vector fields and differential forms on $M$ with
their $\R$-invariant extensions to $\R\times M$.

Given a function $f\co M\rightarrow\R$, we define its extension $F$
to $\R\times M$ by
\[ \begin{array}{rccc}
F\co & \R\times M & \longrightarrow & \R \\
     & (t,p)      & \longmapsto     & \rme^tf(p).
\end{array} \]
Then $\rmd F=\rme^t(\rmd f+f\,\rmd t)$, and with the ansatz
$X_F=a\partial_t+bR+Y$, where $Y(t,p)\in\ker\alpha\subset
TM\equiv T_{t,p}(\{t\}\times M)$, one finds
\[ X_F= -\rmd f(R)\partial_t+fR+Y,\]
with $Y\in\ker\alpha$ defined by
\[ i_Y\rmd\alpha=-\rmd f + \rmd f(R)\alpha.\]
Notice that $X_F$ is $\R$-invariant, and
its projection to $M$ equals the contact Hamiltonian
vector field $X_f=fR+Y$ with respect to the contact form $\alpha$;
see \cite[Theorem~2.3.1]{geig08}.

We compute (with $H(t,p)=\rme^t$ and $X_H=R$)
\[ \omega(R,X_F)=-\rmd H(X_F)=-\rme^t\,\rmd t(X_F)
=-\rme^t\,\rmd f(R).\]
For an arbitrary function $F$ on $(\R\times M,\omega)$,
the key condition for the pair of functions $H,F$
to turn $X_H=R$ into a Liouville integrable system in the sense of
\cite[Definition~1.10]{bofo04} is that $F$ Poisson commute with~$H$,
which means that $\rmd F(X_H)=0$ or, equivalently $\omega(X_H,X_F)=0$.
So for a function $F$ of the form $F=\rme^tf$ this condition becomes
$\rmd f(R)=0$. Then the defining equations for $Y$ simplify to
\begin{equation}
\label{eqn:Y}
\alpha(Y)=0,\;\;\;i_Y\rmd\alpha=-\rmd f,
\end{equation}
and $X_F=fR+Y$ coincides with $X_f$. Notice that in this
situation both $R$ and $Y$ are tangent to the level sets
of~$f$, and $Y$ is non-zero along regular level sets. This implies
that the components of regular level sets are $2$-tori, and the
components of critical level sets (under the Morse--Bott assumption on~$f$)
are periodic orbits of~$R$, tori or Klein bottles.

One also needs to ensure the functional independence
of $H$ and $F$: this translates into the requirement that $\rmd f$
be non-zero almost everywhere. Completeness
of the vector fields $X_H,X_F$ is guaranteed by $M$ being closed.
This justifies our Definition~\ref{defn:reeb-bott-int}.

We now show that the vector fields $R$ and $X_f=fR+Y$ commute.
Since $\rmd f(R)=0$, this is the same as saying that $R$ and
$Y$ commute.

\begin{lem}
In the integrable situation, i.e.\ when $\rmd f(R)=0$,
the vector field $Y$ defined by \eqref{eqn:Y} commutes with the Reeb
vector field: $[R,Y]=0$.
\end{lem}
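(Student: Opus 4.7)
The plan is to verify that $[R,Y]$ satisfies the homogeneous version of the defining equations~\eqref{eqn:Y}, and then invoke uniqueness. That is, I will show $\alpha([R,Y])=0$ and $i_{[R,Y]}\rmd\alpha=0$, and then deduce $[R,Y]=0$ from the fact that $\rmd\alpha$ restricts to a symplectic form on $\ker\alpha$.

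For the first identity, I would start from $\alpha(Y)=0$, apply $\mathcal{L}_R$, and use the Leibniz rule together with $\mathcal{L}_R\alpha=0$ (which is immediate from Cartan's formula, since $i_R\alpha=1$ and $i_R\rmd\alpha=0$). This gives $\alpha([R,Y])=\mathcal{L}_R(\alpha(Y))-(\mathcal{L}_R\alpha)(Y)=0$. For the second identity, I would similarly apply $\mathcal{L}_R$ to the defining equation $i_Y\rmd\alpha=-\rmd f$. Using $\mathcal{L}_R\rmd\alpha=0$ on the left and the commutation $\mathcal{L}_R\rmd=\rmd\mathcal{L}_R$ on the right, this reduces to
\[ i_{[R,Y]}\rmd\alpha = -\rmd\bigl(\mathcal{L}_Rf\bigr) = -\rmd\bigl(\rmd f(R)\bigr) = 0, \]
where the last equality is precisely the integrability hypothesis.

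Finally, I would observe that any vector field $Z$ on $M$ satisfying $\alpha(Z)=0$ and $i_Z\rmd\alpha=0$ must vanish: the first condition places $Z$ in the contact distribution $\ker\alpha$, and on that $2$-plane field $\rmd\alpha$ is non-degenerate, so the second condition forces $Z=0$. Applying this to $Z=[R,Y]$ completes the proof.

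I do not anticipate a genuine obstacle here; this is a clean Cartan-calculus computation, and the only subtlety is making sure the uniqueness step invokes the correct non-degeneracy of $\rmd\alpha|_{\ker\alpha}$ rather than non-degeneracy on all of $TM$ (which $\rmd\alpha$ does not have, since $R$ lies in its kernel).
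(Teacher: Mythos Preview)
Your proposal is correct and follows essentially the same approach as the paper: both arguments show $\alpha([R,Y])=0$ and $i_{[R,Y]}\rmd\alpha=0$, then conclude via non-degeneracy of $\rmd\alpha|_{\ker\alpha}$. The only cosmetic difference is that for the first identity the paper expands $\rmd\alpha(R,Y)$ via the Koszul formula rather than applying the Leibniz rule for $L_R$, and the paper leaves the final uniqueness step implicit whereas you spell it out.
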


\begin{proof}
(i) We first show that $[R,Y]$ is tangent to $\ker\alpha$:
\begin{eqnarray*}
0 & = & \rmd\alpha(R,Y)\\
  & = & R\bigl(\alpha(Y)\bigr)-Y\bigl(\alpha(R)\bigr)
        -\alpha([R,Y])\\
  & = & -\alpha([R,Y]).
\end{eqnarray*}

(ii) It remains to show that $\rmd\alpha([R,Y],\,.\,)$
vanishes identically. Using the fact that $i_{[R,Y]}=
[L_R,i_Y]$ as operators on differential forms, we find
\begin{eqnarray*}
\rmd\alpha([R,Y],\,.\,)
  & = & [L_R,i_Y]\rmd\alpha\\
  & = & L_Ri_Y\rmd\alpha\\
  & = & -L_R\rmd f = 0.\qed
\end{eqnarray*}
\renewcommand{\qed}{}
\end{proof}
\subsection{Liouville theorems}
Here is the analogue of the Liouville theorem \cite[Theorem~1.2]{bofo04}
for integrable Reeb flows. For this theorem one only needs
the (Liouville) integrability condition $\rmd f(R)=0$, not that
$f$ is a Morse--Bott function.

Recall that a \emph{pre-Lagrangian} surface in a closed $3$-dimensional
contact manifold $(M,\xi)$ is an embedded surface $\Sigma\subset M$ such that
\begin{itemize}
\item[(i)] $\Sigma$ is transverse to~$\xi$;
\item[(ii)] the line distribution $\xi\cap T\Sigma$
can be defined by a closed $1$-form on~$\Sigma$.
\end{itemize}
This concept (also in higher dimensions) goes back to Bennequin,
see~\cite{ehs95}.

If $\Sigma$ is orientable, it is necessarily diffeomorphic to
a $2$-torus~$T^2$. Using the ideas and results of \cite[Section~9.3]{conl01}
one sees that $T^2\subset (M,\xi)$ is pre-Lagrangian if and only
if the characteristic foliation defined by the line distribution
$\xi\cap T(T^2)$ is diffeomorphic to a linear foliation.
In our situation, where the pre-Lagrangian tori arise as
regular level surfaces of the integral~$f$, the proof of the
following theorem includes a construction of this linearising
diffeomorphism.

\begin{thm}[`Reeb--Liouville']
\label{thm:reeb-liouville}
Let $f$ be an integral for the Reeb flow of $(M,\alpha)$, and let
$\Sigma\subset M$ be a component of a regular level set of~$f$. Then
$\Sigma$ is a closed, oriented embedded surface, and the
following statements hold:

\begin{itemize}
\item[(a)] $\Sigma$ is a pre-Lagrangian torus invariant under the
flow of $R$ and $X_f=fR+Y$.
\item[(b)] A neighbourhood of $\Sigma$ in $M$ is diffeomorphic
to $[-1,1]\times T^2$ such that with coordinates $r$ on $[-1,1]$ and $x_1,x_2$
on $T^2=(\R/2\pi\Z)^2$ we have
\begin{itemize}
\item[(i)] $f=f(r)$ with $\partial f/\partial r>0$ (so that, conversely,
$r$ is a function of~$f$);
\item[(ii)] $\alpha=h_1(r)\,\rmd x_1+h_2(r)\,\rmd x_2$.
\end{itemize}
\end{itemize}
\end{thm}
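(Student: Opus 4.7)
For part~(a), the plan is to extract from \eqref{eqn:Y} and the integrability condition $\rmd f(R)=0$ that both $R$ and $Y$ are nowhere-vanishing vector fields tangent to $\Sigma$ that together frame its tangent bundle. As a component of a regular level set, $\Sigma$ is a closed, oriented, embedded surface. The Reeb field $R$ is tangent to $\Sigma$ by the integrability assumption, and $\rmd f(Y)=-\rmd\alpha(Y,Y)=0$ by \eqref{eqn:Y}, so $Y$ is also tangent. Moreover $\rmd f|_\xi$ is a nonzero linear functional on $\xi$ along $\Sigma$ (its kernel is the $1$-dimensional subspace $T\Sigma\cap\xi$), which via the nondegeneracy of $\rmd\alpha|_\xi$ and the identity $i_Y\rmd\alpha=-\rmd f$ forces $Y\neq 0$ on~$\Sigma$. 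Together with $R\notin\xi$, this shows $R$ and $Y$ frame $T\Sigma$, so $\Sigma\cong T^2$. The pre-Lagrangian property follows from $\Sigma\pitchfork\xi$ (witnessed by~$R$) and from the closedness of $\alpha|_\Sigma$, which holds because $\rmd\alpha$ vanishes on the pair $(R,Y)$ spanning $T\Sigma$ (use $i_R\rmd\alpha=0$). Both $R$ and $X_f=fR+Y$ annihilate~$f$, hence preserve~$\Sigma$.

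For part~(b), the crucial additional fact is that $X_f$ itself preserves~$\alpha$: $L_{X_f}\alpha = \rmd(\alpha(X_f)) + i_{X_f}\rmd\alpha = \rmd f + i_Y\rmd\alpha = 0$. Together with $L_R\alpha=0$ and $[R,X_f]=[R,Y]=0$ from the preceding Lemma, this exhibits the flows of $R$ and $X_f$ as a locally free, $\alpha$-preserving $\R^2$-action on a neighbourhood of~$\Sigma$ that preserves each level torus~$\Sigma_r$; by compactness of orbits, the stabilizer of any point on $\Sigma_r$ is a rank-$2$ lattice $\Lambda(r)\subset\R^2$. I would then choose a transversal arc $\gamma\co[-1,1]\to M$ through a base point $p_0\in\Sigma$ with $\gamma'(r)\in\xi$ and $\rmd f(\gamma'(r))>0$; this is possible since $\rmd f|_\xi$ is a nonzero linear functional, so its positive half-space furnishes a nonvanishing section of $\xi$ transverse to the level sets. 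An implicit function theorem argument, applied to the local diffeomorphism given by the $\R^2$-action on each torus, yields a smooth basis $\lambda_j(r)=(\lambda_j^{(1)}(r),\lambda_j^{(2)}(r))$ of $\Lambda(r)$ for small~$r$.

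The coordinate map is
\[ \Phi(r,x_1,x_2) := \phi^R_{(x_1\lambda_1^{(1)}(r)+x_2\lambda_2^{(1)}(r))/(2\pi)}\,
\phi^{X_f}_{(x_1\lambda_1^{(2)}(r)+x_2\lambda_2^{(2)}(r))/(2\pi)}\bigl(\gamma(r)\bigr). \]
By choice of the $\lambda_j(r)$, this descends to a smooth map $[-1,1]\times T^2\to M$ (after rescaling~$r$), and is a diffeomorphism onto a neighbourhood of~$\Sigma$, since $R$ and $X_f$ span $T\Sigma_r$ while $\gamma'$ is transverse. Using $\alpha$-invariance of both flows, direct computation gives
\[ \Phi^*\alpha(\partial_{x_j}) = h_j(r):=\frac{\lambda_j^{(1)}(r)+f(r)\lambda_j^{(2)}(r)}{2\pi} = \frac{1}{2\pi}\oint_{c_j(r)}\alpha, \]
while $\Phi^*\alpha(\partial_r) = \alpha(\gamma'(r)) + \frac{1}{2\pi}\sum_j\bigl(\lambda_j^{(1)'}(r)+f(r)\lambda_j^{(2)'}(r)\bigr)\,x_j$.

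The first summand vanishes by the choice of~$\gamma$, and the main obstacle is to show that the linear-in-$x$ terms also vanish. I would establish this by a Stokes computation on the cylinder $C_j$ swept out by $c_j(r)$ as $r$ varies: the identity $2\pi\bigl(h_j(r_1)-h_j(r_0)\bigr)=\int_{C_j}\rmd\alpha$, combined with a direct evaluation of the integrand (using $i_R\rmd\alpha=0$, $i_Y\rmd\alpha=-\rmd f$, and the $R,X_f$-invariance of~$f$), yields the differential identity $2\pi h_j'(r)=\lambda_j^{(2)}(r)f'(r)$. Differentiating the explicit formula for $h_j$ then forces $\lambda_j^{(1)'}+f\lambda_j^{(2)'}=0$; alternatively this is \emph{a posteriori} necessary for $\Phi^*\alpha$ to descend to a $2\pi$-periodic form on~$T^2$. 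Hence $\Phi^*\alpha=h_1(r)\,\rmd x_1+h_2(r)\,\rmd x_2$, and $f\circ\Phi(r,x_1,x_2)=f(\gamma(r))$ is a strictly increasing function of~$r$ by construction, yielding~(i) and~(ii).
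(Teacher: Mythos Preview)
Your argument is correct, and part~(a) matches the paper's proof almost verbatim. In part~(b), however, you make a sharper choice than the paper does. The paper uses the commuting pair $(R,Y)$ to define the $\R^2$-action; since $Y$ does \emph{not} preserve~$\alpha$, the resulting coordinates yield only $\alpha=h_0(r)\,\rmd r+h_1(r)\,\rmd x_1+h_2(r)\,\rmd x_2$, and a final Gray--Moser deformation (solving $\dot{\alpha}_t+i_{X_t}\rmd\alpha_t=0$ with $X_t$ tangent to the torus factor) is needed to remove the $h_0(r)\,\rmd r$ term. You instead use the pair $(R,X_f)$, exploiting the identity $L_{X_f}\alpha=0$ so that \emph{both} generators preserve~$\alpha$; combined with a Legendrian transversal $\gamma'\in\xi$, this kills the $\rmd r$-component directly. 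Your Stokes computation for $2\pi h_j'(r)=\lambda_j^{(2)}(r)f'(r)$ is valid (the integrand $\rmd\alpha(\partial_rC_j,\partial_tC_j)=\frac{\lambda_j^{(2)}}{2\pi}\rmd f(\partial_rC_j)=\frac{\lambda_j^{(2)}}{2\pi}f'(r)$ is independent of~$t$), and the alternative periodicity argument is also sound: $\Phi$ factors through $[-1,1]\times T^2$, so the linear-in-$x_j$ coefficient of $\Phi^*\alpha(\partial_r)$ must vanish. The upshot is that your route trades the paper's Moser step for the observation $L_{X_f}\alpha=0$, which is arguably more economical.
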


\begin{rem}
\label{rem:lutz-form}
The contact condition for a $1$-form $\alpha$ as in (b-ii) of the
Reeb--Liouville theorem, with orientation defined by the
volume form $\rmd r\wedge\rmd x_1\wedge\rmd x_2$, becomes
\begin{equation}
\label{eqn:Delta}
\Delta:=\left|\begin{array}{cc}
h_1 & h_1'\\
h_2 & h_2'
\end{array}\right|<0.
\end{equation}
Geometrically this means that the curve $r\mapsto (h_1(r),h_2(r))$
has its trace in $\R^2\setminus\{(0,0)\}$, and the velocity vector
$(h_1'(r),h_2'(r))$ always points to the right of the position
vector $(h_1(r),h_2(r))$.
We call contact forms on $I\times T^2$ (where $I$ can be
any real interval) with such a coordinate description
\textbf{Lutz forms}, because of their role in the Lutz twist
\cite[Section~4.3]{geig08}. Indeed, such Lutz twist will be instrumental
in proving Theorems \ref{thm:bott-graph} and~\ref{thm:liouville-eq}.
Notice that the Reeb vector field of a Lutz form is given by
\[ R=\frac{h_2'\partial_{x_1}-h_1'\partial_{x_2}}{\Delta};\]
in particular, it is tangent to the $T^2$-factor.
\end{rem}

\begin{proof}[Proof of Theorem~\ref{thm:reeb-liouville}]
(a) We have the integrability condition $\rmd f(R)=0$, and thus,
as observed earlier, $Y\neq 0$ along $\Sigma$ and
$\rmd f(Y)=0$ from the defining equation \eqref{eqn:Y} for~$Y$.
Write $i\co\Sigma\rightarrow M$ for the inclusion map. Then
\[ \langle Y\rangle=\ker\alpha\cap T\Sigma=\ker i^*\alpha,\]
the kernel of $i^*\alpha$ being $1$-dimensional since $R$ is tangent
to~$\Sigma$. Again by \eqref{eqn:Y}, the $1$-form $i^*\alpha$
is closed.

(b) Write $\phi_t^R,\phi_t^Y$ for the flow of $R$ and~$Y$, respectively.
Since $R$ and $Y$ commute, we can define an $\R^2$-action on $M$ by
\[ \begin{array}{ccc}
\R^2\times M       & \longrightarrow & M\\
\bigl((t_1,t_2),p) & \longmapsto     & \Phi(t_1,t_2)(p):=
                                       \phi_{t_1}^R\phi_{t_2}^Y(p).
\end{array}\]
Since $R$ and $Y$ are pointwise linearly independent along
the regular level set~$\Sigma$,
for $p_0\in\Sigma$ this defines a covering map
\[ \begin{array}{rccc}
\Phi_0\co & \R^2      & \longrightarrow & \Sigma\\
          & (t_1,t_2) & \longmapsto     & \Phi(t_1,t_2)(p_0).
\end{array} \]
Hence, there is a lattice $\langle e_1^0,e_2^0\rangle$ in $\R^2$
such that $\Phi_0$ descends to a diffeomorphism
$\R^2/\langle e_1^0,e_2^0\rangle\rightarrow T^2\cong\Sigma$;
see the proof of \cite[Lemma~1.4]{bofo04} for further details.
The $1$-form $\rmd f$ defines a coorientation of $\Sigma$,
and with the orientation of $M$ (for which $\alpha\wedge\rmd\alpha$
is a positive volume form) this defines an orientation of~$\Sigma$.
We choose the orientation of the basis $e_1^0,e_2^0$ compatibly with
this orientation.

Let $\nabla f$ be the gradient of $f$ (in terms of some
auxiliary Riemannian metric on~$M$). Then the flow of
$\nabla f/|\nabla f|^2$ (defined near $\Sigma$) preserves the
foliation made up of the
level sets of~$f$. After rescaling this vector field by a small
positive constant, we may assume that the flow is defined
for all $t\in [-1,1]$, and hence defines an identification
of a neighbourhood of $\Sigma$ with $[-1,1]\times T^2$ such that
the tori $\{r\}\times T^2$ are regular level sets of~$f$.
This proves (b-i).

Now, for any $r\in[-1,1]$ we find a lattice $\langle e_1^r,e_2^r\rangle$
such that the covering map
\[ \begin{array}{rccc}
\Phi_r\co & \R^2      & \longrightarrow & \{r\}\times T^2\\
          & (t_1,t_2) & \longmapsto     & \Phi(t_1,t_2)(r,p_0)
\end{array} \]
descends to a diffeomorphism 
$\R^2/\langle e_1^r,e_2^r\rangle\rightarrow \{r\}\times T^2$.
The $e_i^r$ are solutions of the equation
\[ \Phi(t_1,t_2)(r,p_0)=(r,p_0),\]
and since $\partial\Phi/\partial t_1=R$ and $\partial\Phi/\partial t_2=Y$
are pointwise linearly independent, the $e_i^r$ depend smoothly on $r$
by the implicit function theorem.

By writing $(r,p)\in\{r\}\times T^2$ as
\[ \Phi\bigl(x_1e_1^r/2\pi+x_2e_2^r/2\pi\bigr)(r,p_0)\]
we define angular coordinates $(x_1,x_2)$ on $T^2=(\R/2\pi\Z)^2$.
This finishes the construction of local coordinates, and it remains to
show that in terms of these coordinates
(and, as we shall see, after a further diffeomorphism),
$\alpha$ is a Lutz form.

By construction we have
\begin{eqnarray*}
R & = & a_1\partial_{x_1}+a_2\partial_{x_2},\\
Y & = & b_1\partial_{x_1}+b_2\partial_{x_2},
\end{eqnarray*}
with smooth functions $a_1,a_2,b_1,b_2$ depending on $r$ only.
Vice versa, the $\partial_{x_i}$ can be written as pointwise linear
combinations of $R$ and $Y$ with coefficients depending on $r$ only.

Write the contact form $\alpha$ on $[-1,1]\times T^2$ as
\[ \alpha=h_0\,\rmd r+ h_1\,\rmd x_1+h_2\,\rmd x_2.\]
Then from $h_i=\alpha(\partial_{x_i})$ for $i=1,2$, we see
that $h_1=h_1(r)$ and $h_2=h_2(r)$.

The $\rmd r$-component of the equation $i_R\rmd\alpha=0$ reads
\[ a_1\frac{\partial h_0}{\partial x_1}
+a_2\frac{\partial h_0}{\partial x_2}
-a_1\frac{\partial h_1}{\partial r}
-a_2\frac{\partial h_2}{\partial r} =0.\]
This implies that $\rmd h_0(R)$
depends on $r$ only. Likewise, the equation
$i_Y\rmd\alpha=-\rmd f$ translates into
\[ b_1\frac{\partial h_0}{\partial x_1}+b_2\frac{\partial h_0}{\partial x_2}
 - b_1\frac{\partial h_1}{\partial r} -b_2\frac{\partial h_2}{\partial r}
=-f'(r),\]
so we see that $\rmd h_0(Y)$ also depends on $r$ only. Since $R$ and
$Y$ are pointwise linearly independent, and $h_0$ is
$2\pi$-periodic in $x_1,x_2$, this forces $h_0=h_0(r)$. In particular,
$\rmd\alpha$ now simplifies to
\[ \rmd\alpha=h_1'(r)\,\rmd r\wedge\rmd x_1+h_2'(r)\,\rmd r\wedge\rmd x_2.\]

It remains to get rid of the term $h_0(r)\,\rmd r$ by a suitable
diffeomorphism of $[-1,1]\times T^2$. This can be done by a Gray
deformation \cite[Section~2.2]{geig08}
as follows. Consider the $1$-parametric family of contact forms
\[ \alpha_t:=th_0(r)\,\rmd r+h_1(r)\,\rmd x_1+h_2(r)\,\rmd x_2,\;\;\;
t\in [0,1].\]
Notice that the $\alpha_t$ all share the Reeb vector field~$R$.
On a closed $3$-manifold this is sufficient to guarantee that the
contact forms in this family are diffeomorphic~\cite[Proposition~2.1]{geig22};
in the present situation of a manifold with boundary
we need to verify that the vector field provided by
the Moser trick integrates up to time~$1$.

When we apply the Moser trick~\cite[p.~60]{geig08} to the equation
\begin{equation}
\label{eqn:moser}
\psi_t^*\alpha_t=\alpha_0,
\end{equation}
assuming that $\psi_t$ is the flow of a time-dependent vector
field $X_t\in\ker\alpha_t$, by differentiating \eqref{eqn:moser}
with respect to $t$ we obtain
\[ \dot{\alpha}_t+i_{X_t}\rmd\alpha_t=0.\]
With the time-independent ansatz
$X=c_1(r)\partial_{x_1}+c_2(r)\partial_{x_2}$,
the conditions on $X=X_t$ translate into
\[ \begin{pmatrix}
h_1  & h_2\\
h_1' & h_2'
\end{pmatrix}
\begin{pmatrix}
c_1\\
c_2
\end{pmatrix}=
\begin{pmatrix}
0\\
h_0
\end{pmatrix}, \]
which has a unique solution $(c_1(r),c_2(r))$ by the contact
condition~\eqref{eqn:Delta}. Since $X$ is tangent to the
$T^2$-factor, it integrates up to time~$1$ to yield the desired isotopy.
\end{proof}

By a variant of this argument we can also establish a normal
form for the neighbourhood of an \textbf{elliptic orbit}
in the Reeb flow, that is, a periodic orbit along which the Bott integral $f$
has a minimum or maximum in transverse direction, so that the
orbit constitutes a connected component of the critical set $\Crit(f)$.
Here the \emph{Bott} integrability is essential.

\begin{thm}[Neighbourhood theorem for elliptic Reeb orbits]
\label{thm:nbhd}
Let $f$ be a Bott integral for the Reeb flow of $(M,\alpha)$, and let
$\gamma\subset M$ be a periodic Reeb orbit transverse to which
$f$ has a local minimum. Then there are coordinates $(\theta,r,\varphi)$
on a neighbourhood $S^1\times D^2_{\delta}$ of $\gamma$,
with $\gamma=S^1\times\{0\}$, such that $f=r^2$ on that
neighbourhood, and a contact form
\[ \alpha'=h_0(r)\,\rmd r+h_1(r)\,\rmd\theta+h_2(r)\,\rmd\varphi\]
on $S^1\times D^2_{\delta}$ such that
\begin{itemize}
\item[(i)] $\alpha'$ coincides with $\alpha$ near $S^1\times\partial
D^2_{\delta}$;
\item[(ii)] on a smaller neighbourhood of $\gamma$ we have
$\alpha'=\rmd\theta+r^2\,\rmd\varphi$;
\item[(iii)] the contact forms $\alpha$ and $\alpha'$
on $S^1\times D^2_{\delta}$ are isotopic relative to a neighbourhood
of the boundary via contact forms all having the same Bott integral.
\end{itemize}
In particular, the contact structure on $M$ defined by $\alpha'$ on $S^1\times
D^2_{\delta}$ and $\alpha$ on the complement of this solid torus
is isotopic, by Gray stability, to the one defined by $\alpha$
on all of~$M$.
\end{thm}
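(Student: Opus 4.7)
The plan is to adapt the proof of Theorem~\ref{thm:reeb-liouville}(b) to a full solid-torus neighbourhood of $\gamma$, producing a Lutz-form normalisation of $\alpha$ that extends across the core orbit, and then to interpolate to the model $\rmd\theta+r^2\,\rmd\varphi$ through Lutz forms all sharing $f$ as a Bott integral.

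First, since $\gamma$ is a non-degenerate transverse minimum of the Morse--Bott function~$f$, the Morse--Bott lemma produces coordinates $(\theta,x,y)$ on a tubular neighbourhood $U\cong S^1\times D^2_{\delta}$ with $\gamma=S^1\times\{0\}$ and $f=x^2+y^2$; in polar coordinates $(r,\varphi)$ on $D^2_\delta$, this becomes $f=r^2$. Rescaling the $S^1$-parameter along $\gamma$, one may arrange $R|_\gamma=\partial_\theta$. On $U\setminus\gamma$, the level sets of $f$ are regular tori, so the argument in the proof of Theorem~\ref{thm:reeb-liouville}(b) applies with only minor modifications: $R$ and the vector field $Y$ from~\eqref{eqn:Y} commute, span the tangent spaces of the level tori, and force $\alpha$, after a diffeomorphism isotopic to the identity on $U\setminus\gamma$, into the shape
\[
\alpha=h_0(r)\,\rmd r+h_1(r)\,\rmd\theta+h_2(r)\,\rmd\varphi.
\]
Smoothness of $\alpha$ across $\gamma$, combined with $R|_\gamma=\partial_\theta$, forces $h_1(0)=1$ and $h_2(r)=r^2\tilde h_2(r)$ for a smooth function $\tilde h_2$ (viewed as a function of $r^2$), with $\tilde h_2(0)\neq 0$ by the contact condition at $r=0$.

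Next, I would define $\alpha'$ by setting $\alpha'=\rmd\theta+r^2\,\rmd\varphi$ on $\{r\leq\delta/3\}$, $\alpha'=\alpha$ on $\{r\geq 2\delta/3\}$, and interpolating on the intermediate collar by a Lutz form $H_0(r)\,\rmd r+H_1(r)\,\rmd\theta+H_2(r)\,\rmd\varphi$ chosen so that the plane curve $r\mapsto(H_1(r),H_2(r))$ in $\R^2\setminus\{(0,0)\}$ matches the prescribed germs at both ends and satisfies the contact condition~\eqref{eqn:Delta}. Since the model and the actual $(h_1,h_2)$ both produce positive Reeb vector fields, the two germs have matching chirality in $\R^2\setminus\{(0,0)\}$ and such an interpolation exists by a direct geometric construction. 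Properties (i) and (ii) are then built into the construction.

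Finally, for (iii), I would apply the Gray--Moser device from Theorem~\ref{thm:reeb-liouville}(b). Connect $\alpha$ and $\alpha'$ through a one-parameter family $\alpha_s=h_0^s\,\rmd r+h_1^s\,\rmd\theta+h_2^s\,\rmd\varphi$ of Lutz forms, constant in $s$ near $\partial U$; every $\alpha_s$ automatically satisfies $\rmd f(R_{\alpha_s})=0$ because its Reeb field is tangent to the $\{r=\text{const}\}$-tori, so $f$ remains a Bott integral throughout. The Moser trick applied to $\psi_s^*\alpha_s=\alpha$ then produces a time-dependent vector field $X_s\in\ker\alpha_s$ of the form $c_1(r,s)\partial_\theta+c_2(r,s)\partial_\varphi$; it vanishes near $\partial U$ and is tangent to the level tori, hence integrates to time $1$. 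The main obstacle is smoothness at $\gamma$: one must verify that the interpolating triple $(h_0^s,h_1^s,h_2^s)$ can be chosen so that $h_1^s(0)=1$ and $h_2^s(r)=O(r^2)$ uniformly in $s$, and that the resulting vector field $X_s$ extends smoothly across the core. The Bott non-degeneracy of $f$ is essential precisely here, forcing the quadratic vanishing rate $h_2\sim r^2$ that makes $\rmd\theta+r^2\,\rmd\varphi$ the correct model.
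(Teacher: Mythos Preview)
Your approach is essentially the same as the paper's: Morse--Bott lemma for $f=r^2$, flow-based coordinates yielding a Lutz-type form $h_0(r)\,\rmd r+h_1(r)\,\rmd\theta+h_2(r)\,\rmd\varphi$, then a deformation to the model through Lutz forms (whose Reeb fields are tangent to the $r$-tori, so $f$ stays an integral). The main difference is in the execution of the deformation. The paper does not attempt a single interpolation plus Moser; instead it performs three elementary modifications in sequence: (1) multiply $h_0$ by a cutoff to kill the $\rmd r$-term near the core, (2) divide the form by a function equal to $h_1$ near $r=0$ to normalise $h_1\equiv 1$ there, and (3) linearly interpolate $h_2$ to a function equal to $r^2$ near $r=0$. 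Each step is manifestly a path of smooth contact forms on the full solid torus with $f$ as Bott integral, so the smoothness-at-the-core issue you correctly flag as ``the main obstacle'' simply does not arise.

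Two further remarks. First, your Moser argument is superfluous for~(iii): the statement only asks for a \emph{path} of contact forms with common Bott integral, fixed near the boundary; the family $\alpha_s$ itself is that path, and Gray stability on $M$ (the ``in particular'') follows formally once such a path exists. Second, the phrase ``after a diffeomorphism isotopic to the identity on $U\setminus\gamma$'' is slightly misleading: in the paper no Gray-type diffeomorphism is applied at this stage; the flow-based parametrisation directly \emph{defines} the new coordinates $(\theta,r,\varphi)$, and in these coordinates $\alpha$ already has the stated form with $r$-dependent coefficients (the $h_0\,\rmd r$ term is retained, not removed). So your outline is correct, but the paper's stepwise deformation both avoids the smoothness worry and makes the Moser machinery unnecessary.
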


\begin{proof}
The generalised Morse--Bott lemma \cite[Lemma~1.7]{bofo04}
provides us with a neighbourhood $S^1\times D^2_{\delta}$
of $\gamma=S^1\times\{0\}$ where $f=r^2$ in terms of a transverse radial
coordinate~$r$. Choose a smooth path $[0,\delta]\ni r
\mapsto p_0(r)\in S^1\times D^2_{\delta}$ with $f(p_0(r))=r$. Using
the flows $\phi_t^R$ and $\phi_t^Y$ as in the proof of
Theorem~\ref{thm:reeb-liouville}, we define
\[ \begin{array}{rccc}
\Phi\co & [0,\delta]\times\R^2 & \longrightarrow & S^1\times D^2_{\delta}\\
        & (r,t_1,t_2)          & \longmapsto     & \phi_{t_1}^R\phi_{t_2}^Y
                                                   (p_0(r)).
\end{array}\]
We then find a smooth family of lattices $\langle e_1^r,e_2^r\rangle$,
$r\in (0,\delta]$, determined by the following conditions:
\begin{itemize}
\item[-] The map $\Phi$ descends to a diffeomorphism
\[ \{r\}\times\R^2/\langle e_1^r,e_2^r\rangle\longrightarrow
S^1\times S^1_r;\]
\item[-] the loop  $t\mapsto t e_1^r$ on the left-hand side,
$t\in [0,1]$, maps to a loop on the torus $S^1\times S^1_r$
isotopic to $S^1\times *$;
\item[-] the loop  $t\mapsto t e_2^r$ maps to a loop isotopic to
$*\times S^1_r$.
\end{itemize}
We define coordinates $(\theta,r,\varphi)$ on $S^1\times\bigl(D^2_{\delta}
\setminus\{0\}\bigr)$ by the parametrisation
\[ (\theta,r,\varphi)\longmapsto \Phi(\theta e_1^r/2\pi+\varphi e_2^r/2\pi)
(p_0(r)).\]
As in the preceding proof one then sees that
\[ \alpha = h_0(r)\,\rmd r+h_1(r)\,\rmd\theta+h_2(r)\,\rmd\varphi\;\;\;
\text{on $S^1\times\bigl(D^2_{\delta}\setminus\{0\}\bigr)$}.\]
Since we know \emph{a priori} that $\alpha$ extends smoothly
over $S^1\times\{0\}=\gamma$, this forces $\alpha$ to coincide with
some constant positive multiple of $\rmd\theta$ along~$\gamma$, with
$\theta$ defining
a parametrisation of~$\gamma$. We now modify $\alpha$ near $\gamma$ in
several steps.

\vspace{1mm}

Step 1: Replace $h_0(r)$ by $\psi(r)h_0(r)$, where
$\psi\co [0,\delta]\rightarrow [0,1]$ is a smooth function identically equal
to $0$ near $r=0$, say on $[0,\delta_1]$,
and identically equal to $1$ near~$r=\delta$.

This leaves the Reeb vector field unchanged, so $f$ is still an integral.
Moreover, the linear deformation from $\alpha$ to the new contact form
is via contact forms. 

\vspace{1mm}

Step 2: We may assume that $\delta_1>0$ has been chosen sufficiently
small such that $h_1>0$ on $[0,\delta_1]$. Now choose a smooth
function $\chi\co[0,\delta_1]\rightarrow\R^+$ with $\chi(r)=h_1(r)$
near $r=0$, say on $[0,\delta_2]$, and $\chi(r)=1$ near $\delta_1$.
Then replace the contact form $h_1(r)\,\rmd\theta+h_2(r)\rmd\varphi$
on $S^1\times D^2_{\delta_1}$ by
\[ \frac{1}{\chi(r)}\bigl(h_1(r)\,\rmd\theta+h_2(r)\,\rmd\varphi\bigr).\]
This does not change the contact structure, but it does affect the Reeb
vector field. However, by Remark~\ref{rem:lutz-form}, the Reeb vector
field stays tangent to the tori $S^1\times S^1_r$, so $f$ is still
an integral.

\vspace{1mm}

Step 3: Writing again $h_2$ for $h_2/\chi$, the new contact form
on $S^1\times D^2_{\delta_2}$ is $\rmd\theta+h_2(r)\,\rmd\varphi$.
Notice that the contact condition translates into $h_2'>0$, and
the $1$-form can only be smooth in $r=0$ if $h_2(0)=0$. We can therefore
find a smooth function $h_2^*\co[0,\delta_2]\rightarrow\R_0^+$
that coincides with $h_2$ near $r=\delta_2$, and near $r=0$ is
given by $h_2^*(r)=r^2$. The linear deformation
\[ \rmd\theta+\bigl((1-t)h_2(r)+th_2^*(r)\bigr)\,\rmd\varphi,\;\;\;
t\in[0,1],\]
is supported in a neighbourhood of $\gamma$ and
via contact forms of Lutz type.
\end{proof}

For critical Reeb orbits of \textbf{hyperbolic} type, where the Bott
integral has a saddle point in transverse direction, there are two local
models for the Liouville foliation, depending on the separatrix
diagram being orientable or not; see~\cite[Section~3.1]{bofo04}.
We shall not try to formulate a normal form theorem for
hyperbolic orbits, as it is irrelevant for our further discussion.
\subsection{Creating isolated critical Reeb orbits}
\label{subsection:create-orbits}
For the connected sum construction in Section~\ref{subsection:cont-connect}
we require that the Bott-integrable Reeb flow on either
summand has an elliptic Reeb orbit. In general, such orbits need not exist.

\begin{ex}
On $T^3=S^1\times S^1\times S^1$ with circular coordinates $x,y,z$
we consider the contact form $\alpha=\cos z\,\rmd x-\sin z\,\rmd y$
with Reeb vector field $R=\cos z\,\partial_x-\sin z\,\partial_y$.
As Bott function we take $f(x,y,z):=\cos z$. This is a Morse function
on the $z$-circle, and hence a Morse--Bott function on $T^3$, with critical
set
\[ \Crit(f)=\{z=0\}\sqcup\{z=\pi\} \]
consisting of two $2$-tori. Clearly, we have $\rmd f(R)=0$.
\end{ex}

The next proposition says that critical Reeb orbits may be introduced
\emph{ad libitum}, at the cost of changing the contact form and the
Bott function, while leaving the contact structure unchanged up to
isotopy.

\begin{prop}
\label{prop:create-orbit}
If $(M,\alpha)$ is Bott integrable with Bott integral~$f$, then by a
local isotopic modification of $\alpha$ near a regular level set $T^2$ of $f$,
fixing the isotopy class of $\ker\alpha$, one can obtain a new
contact form $\alpha^*$ with a Bott integral $f^*$ such that
\begin{itemize}
\item[(i)] $\Crit(f^*)$ is the union of $\Crit(f)$ and a pair of
periodic orbits of the new Reeb vector field~$R_{\alpha^*}$,
one of elliptic and one of hyperbolic type;
\item[(ii)] $f^*$ coincides with $f$ outside an arbitrarily
small neighbourhood of those additional critical orbits.
\end{itemize}
\end{prop}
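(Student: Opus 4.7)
The plan is to work in the Reeb--Liouville collar around the regular level torus $T^2$ and introduce a localised perturbation of the Lutz form that creates a pair of isolated closed Reeb orbits by breaking a resonant torus foliation. By Theorem~\ref{thm:reeb-liouville}, a neighborhood of $T^2$ can be identified with $[-1,1]\times T^2$ with coordinates $(s,x_1,x_2)$, contact form $\alpha=h_1(s)\,\rmd x_1+h_2(s)\,\rmd x_2$, and $f=f(s)$ with $f'(s)>0$. I will modify $\alpha$ only on a thin annular region $U=[s_0-\delta,s_0+\delta]\times T^2$ around some interior level $s_0$, setting $f^*:=f$ outside~$U$.

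The perturbation is
\[ \alpha^*:=\bigl(h_1(s)+\varepsilon\,\sigma(s)\cos x_2\bigr)\rmd x_1+h_2(s)\,\rmd x_2, \]
where $\sigma\co [-1,1]\rightarrow [0,1]$ is a smooth bump supported in $[s_0-\delta,s_0+\delta]$ with $\sigma(s_0)=1$. A direct calculation (in the spirit of the one at the end of the proof of Theorem~\ref{thm:reeb-liouville}) shows that $\alpha^*$ is contact for $\varepsilon$ sufficiently small, that the linear homotopy from $\alpha$ to $\alpha^*$ runs through contact forms (so the contact structure is preserved by Gray stability), and, crucially, that the function
\[ H(s,x_2):=h_1(s)+\varepsilon\,\sigma(s)\cos x_2 \]
is automatically an integral of the new Reeb vector field $R_{\alpha^*}$: any $1$-form of shape $H(s,x_2)\,\rmd x_1+h_2(s)\,\rmd x_2$ has Reeb field in the kernel of $\rmd H$. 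The critical points of $H$ on $U$ are the solutions of $\sin x_2=0$ together with $h_1'(s)+\varepsilon\,\sigma'(s)\cos x_2=0$; for suitably chosen $\varepsilon$ and $\sigma$ there are exactly two such points $(s_*,0)$ and $(s_{**},\pi)$, both non-degenerate and of opposite Hessian signature, corresponding to one hyperbolic and one elliptic closed orbit of~$R_{\alpha^*}$.

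Finally, set $f^*:=F\circ H$ on $U$ and $f^*:=f$ elsewhere, where $F$ is a smooth strictly increasing extension of $f\circ h_1^{-1}$ from the range of $h_1|_U$ to that of~$H|_U$. Since $\sigma$ vanishes on $\partial U$, we have $H|_{\partial U}=h_1(s)$, so $F\circ H=f(s)$ in a neighborhood of $\partial U$ and the two pieces of $f^*$ match in a $C^\infty$ fashion; invariance of $F\circ H$ under $R_{\alpha^*}$ is automatic from $R_{\alpha^*}\cdot H=0$; and $F'>0$ combined with the non-degeneracy of the critical points of $H$ yields the required Morse--Bott structure with exactly the two new critical orbits added to $\Crit(f)$. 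The principal technical point is the simultaneous choice of parameters: $\varepsilon$ must be small enough that the contact condition is preserved, yet $\varepsilon\cdot\max|\sigma'|$ must exceed $\max_U|h_1'|$ in order to force the critical points of~$H$. This is reconciled by a preliminary Gray-isotopy within Lutz forms that makes $|h_1'|$ uniformly small (but nowhere vanishing) on~$U$; after this adjustment the two constraints are compatible and $\delta$ may be taken arbitrarily small, giving the desired arbitrarily thin neighborhood of the new critical orbits.
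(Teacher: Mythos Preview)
Your approach has a genuine counting problem. With $h_1'$ kept nowhere vanishing on~$U$ (say $h_1'>0$), the function $g_0(s)=h_1'(s)+\varepsilon\sigma'(s)$ satisfies $g_0=h_1'>0$ at both endpoints $s_0\pm\delta$ of the support of $\sigma$, and also $g_0(s_0)=h_1'(s_0)>0$ since $\sigma'(s_0)=0$ at the maximum of the bump. If $g_0$ dips below zero anywhere, the intermediate value theorem forces at least two zeros; the same holds for $g_\pi(s)=h_1'(s)-\varepsilon\sigma'(s)$. Hence your perturbation cannot produce exactly one non-degenerate critical point on each of $x_2=0$ and $x_2=\pi$ as you claim: either you get no new critical points, or at least two on any curve that carries one, giving four new critical orbits rather than the required pair. (One can salvage something with an \emph{asymmetric} bump so that both zeros of $g_0$ occur on $x_2=0$ while $g_\pi>0$ throughout, yielding one max and one saddle of $H$ on the same curve --- but this is not what you wrote, and the simultaneous contact condition $|\varepsilon(\sigma h_2'-\sigma' h_2)|<|\Delta|$ then needs a sharper argument than your final paragraph provides.)

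The paper avoids this parameter-balancing entirely by a two-step decoupling. First, by a homotopy of Lutz pairs $(h_1,h_2)$ (after a linear change of the $(x_1,x_2)$-basis and a rescaling), one arranges $h_1(r)=r$ and $h_2(r)\equiv 1$ near $r=0$; this makes the Reeb field equal to $\partial_{x_2}$ there. Second, since \emph{any} function of $(r,x_1)$ is now an integral, one simply replaces $f(r)$ by a Morse function on the transverse annulus $[-\varepsilon,\varepsilon]\times S^1_{x_1}$ with exactly one new local minimum and one new saddle, agreeing with $f(r)$ near $r=\pm\varepsilon$. No delicate competition between smallness of $\varepsilon$ and largeness of $\sigma'$ arises.
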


\begin{proof}
By Theorem~\ref{thm:reeb-liouville}, it suffices to consider
a Lutz form $\alpha=h_1(r)\,\rmd x_1+h_2(r)\,\rmd x_2$ on $[-1,1]\times T^2$
with non-singular Bott integral $f=f(r)$. By choosing the coordinates
$x_1,x_2$ on $T^2$ appropriately and rescaling
$\alpha$ by a positive constant, we may assume that $h_1'(0)>0$ and
$h_2(0)=1$. One can then easily modify the functions $h_1,h_2$ near
$r=0$ by a homotopy through functions with negative $\Delta$
(as defined in~\eqref{eqn:Delta}) such that $h_1(r)=r$ and
$h_2(r)=1$ near $r=0$, see Figure~\ref{figure:deform-h}.

\begin{figure}[h]
\labellist
\small\hair 2pt
\pinlabel $h_1$ [t] at 321 71
\pinlabel $h_2$ [r] at 109 282
\endlabellist
\centering
\includegraphics[scale=0.5]{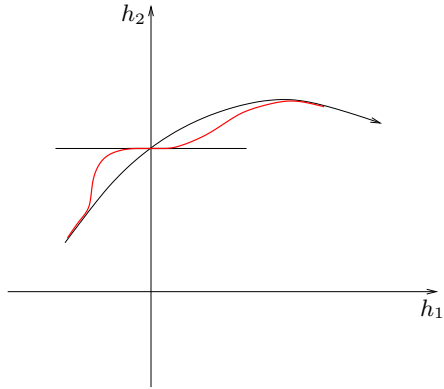}
  \caption{Modifying the Lutz form into $\rmd x_2+r\,\rmd x_1$.}
  \label{figure:deform-h}
\end{figure}

After this modification, the Reeb vector field equals $\partial_{x_2}$
near $\{0\}\times T^2$, say on $[-\varepsilon,\varepsilon]\times T^2$.
Any Morse function on $[-\varepsilon,\varepsilon]
\times S^1_{x_1}$ that coincides with $f(r)$ near $r=\pm\varepsilon$ will
lift to a $\partial_{x_2}$-invariant Morse--Bott function on
this thickened torus and hence define a Bott integral for the
Reeb flow of the modified contact form. The level sets of such a
Morse function $f^*$ with a new local minimum are shown in
Figure~\ref{figure:new-morse}. This $f^*$ may be chosen to
differ from $f$ in a small neighbourhood of $(r,x_1)=(0,0)$ only.

\begin{figure}[h]
\labellist
\small\hair 2pt
\pinlabel $r$ [t] at 426 216
\pinlabel $x_1$ [r] at 216 427
\endlabellist
\centering
\includegraphics[scale=0.4]{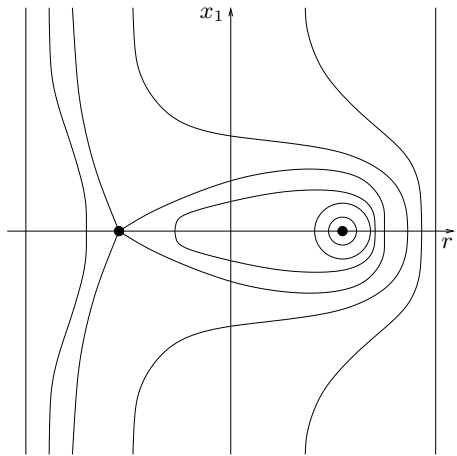}
  \caption{The modified Bott integral $f^*$.}
  \label{figure:new-morse}
\end{figure}
\end{proof}
\section{$3$-manifolds admitting integrable Reeb flows}
In this section we are going to prove Theorems \ref{thm:bott-graph}
and~\ref{thm:liouville-eq}. We first recall the topological
background, in particular the definition of graph manifolds. Next we
construct model Reeb flows on some relevant building blocks,
and then show how these building blocks can be glued along their
boundaries.
\subsection{Graph manifolds}
\label{subsection:graph-manifold}
Graph manifolds have been invented by Waldhausen~\cite{wald67I,wald67II}.
They are the closed, orientable $3$-manifolds that can be decomposed
along a disjoint collection of embedded $2$-tori into $S^1$-fibred
pieces. Equivalently, and more efficiently, one may require that the
pieces merely be Seifert fibred.

It is implicit in Waldhausen's papers that this class of manifolds is
closed under the connected sum operation. Since one can always cut out
an $S^1$-fibred solid torus from a given $S^1$-bundle, and one can
perform the connected sum along two $3$-balls inside such solid tori,
this connected sum result follows from the next lemma. Here by
\emph{graph manifold with boundary} we mean the obvious extension
of the class of graph manifolds (which are closed by definition) to
compact manifolds where each boundary component is a $2$-torus
foliated by $S^1$-fibres.

\begin{lem}
The connected sum of two solid tori is a graph manifold with boundary.
\end{lem}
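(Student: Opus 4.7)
My plan is to exhibit $V_1 \# V_2$ as the union of two $S^1$-fibred pieces glued along a torus. Concretely, I will show
\[
V_1 \# V_2 \;\cong\; (P \times S^1) \cup_T (D^2 \times S^1),
\]
where $P$ is a pair of pants, $T$ is one of the three boundary tori of $P \times S^1$, and the solid torus is glued so that the meridian $\partial D^2 \times \{*\}$ is identified with an $S^1$-fibre of $P \times S^1$. Both pieces are tautologically $S^1$-fibred, so this realises $V_1 \# V_2$ as a graph manifold; its two boundary tori are the unfilled boundary components $C_1 \times S^1$ and $C_2 \times S^1$ of $P \times S^1$, each carrying the natural $S^1$-fibration.

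Writing $N$ for the right-hand side, the main task is to produce an essential separating $2$-sphere in $N$ realising the connected-sum structure. For this, I would choose a properly embedded arc $\alpha \subset P$ with both endpoints on the filled boundary circle $C_0$, chosen so that $\alpha$ separates $C_1$ from $C_2$. The annulus $\alpha \times S^1 \subset P \times S^1$ has its two boundary circles on $T$ as $S^1$-fibres; since the gluing makes these fibres meridians of the solid torus, they bound meridian disks, and capping the annulus with these two disks yields an embedded sphere $\Sigma$. Writing $P \setminus \alpha = A_1 \sqcup A_2$ with $C_i \subset A_i$, each component of $N \setminus \Sigma$ is the union, along a meridional annulus, of the thickened torus $A_i \times S^1 \cong T^2 \times I$ and a $3$-ball (the region of $D^2 \times S^1$ cut off by the two meridian disks).

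The main obstacle is the final topological identification of this union as $(S^1 \times D^2) \setminus B^3$. A Seifert--van Kampen computation confirms $\pi_1 = \Z$ and boundary $T^2 \sqcup S^2$; to pin down the manifold I would observe that attaching a $3$-ball along a meridional annulus on $T^2 \times \{0\}$ is the same as attaching a $2$-handle along the meridian, and that this realises the standard Heegaard-style decomposition $(S^1 \times D^2) \setminus B^3 = (T^2 \times I) \cup (\text{$2$-handle})$. Once each half of $N \setminus \Sigma$ is identified as $V_i \setminus B^3$, reversing the cut along $\Sigma$ is the definition of interior connected sum, so $N \cong V_1 \# V_2$ and the graph-manifold decomposition above completes the proof.
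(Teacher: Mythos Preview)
Your decomposition $(P\times S^1)\cup_T(D^2\times S^1)$, with the meridian of the solid torus glued to the $S^1$-fibre, is exactly the graph-manifold structure the paper arrives at. The difference lies in how the identification with $V_1\#V_2$ is established. You work bottom-up: build $N$, locate a separating sphere, and identify the halves as punctured solid tori --- this is correct, and your ``main obstacle'' can indeed be resolved (for instance, $S^1\times D^2$ with the open ball $J\times D^2_{1/2}$ removed, $J\subset S^1$ an open arc, is visibly the boundary collar $T^2\times[1/2,1]$ together with the ball $(S^1\setminus J)\times D^2_{1/2}$ attached along the meridional annulus $(S^1\setminus J)\times\partial D^2_{1/2}$). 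The paper instead works top-down and sidesteps that step entirely: since each solid torus is the complement of an unknot in~$S^3$, the connected sum $V_1\#V_2$ equals $S^3\#S^3=S^3$ with two unlinked unknotted open solid tori removed, i.e.\ a solid torus $S^1\times D^2$ with a small unknotted solid torus $\nu K$, $K=\{*\}\times S^1_{1/2}$, drilled from its interior. Cutting along the concentric torus $S^1\times S^1_{1/4}$ then immediately yields an inner solid torus and an outer piece which, refibred by the concentric circles in~$D^2$, is an $S^1$-bundle over $S^1\times[1/4,1]$ minus a disc, hence $P\times S^1$. The $S^3$ trick gives the connected-sum identification for free; your route constructs the separating sphere explicitly inside the graph pieces, which is more hands-on but not needed for the lemma.
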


\begin{proof}
Think of the two solid tori as each being embedded in a copy of $S^3$
in the standard way, with complement another solid torus. Thus, the connected
sum of two solid tori equals the connected sum of two $3$-spheres,
i.e.\ another $3$-sphere, with two unknotted and unlinked solid tori
removed. This, in turn, equals a solid torus with a small unknotted
solid torus --- by `small' we mean contained in a ball ---
removed from its interior.

Thus, we need to show that
\[ V:= (S^1\times D^2)\setminus\nu K,\]
with $\nu K$ an open tubular neighbourhood of $K:=*\times S^1_{1/2}$, say,
is a graph manifold.
The idea for the following construction is taken from~\cite{soma81}.
Cut $V$ along the $2$-torus
$S^1\times S^1_{1/4}$. This separates $V$ into a solid torus
$V_1:=S^1\times D^2_{1/4}$, which fibres in the obvious way, and the
complementary piece
\[ V_2:=\bigl(S^1\times (D^2\setminus\Int(D^2_{1/4}))\bigr)\setminus\nu K.\]
Now, $\bigl(S^1\times (D^2\setminus\Int(D^2_{1/4}))$ is fibred
by concentric circles in the second factor, with quotient equal to
$S^1\times [1/4,1]$. The circle $K$ is one of these fibres, and we
may take $\nu K$ to be a fibred neighbourhood of~$K$. Then $V_2$
is an $S^1$-bundle over $S^1\times [1/4,1]$ with an open disc around
$*\times\{1/2\}$ removed.
\end{proof}

It is also implied by Waldhausen's work that the summands in
a prime decomposition of a graph manifold are likewise graph manifolds,
cf.\ \cite[Corollary 2.7]{yano85}.

The following equivalent description of graph manifolds has been
established in \cite[Section~4.9.6]{bofo04}.

\begin{prop}
\label{prop:graph-AB}
The class of graph manifolds coincides with the closed oriented $3$-manifolds
that can be obtained by gluing finitely many copies of a solid torus
$A:=S^1\times D^2$ and copies of $B:=S^1\times(\mbox{\rm pair of pants})$
along their torus boundaries.\qed
\end{prop}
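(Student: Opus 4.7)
The plan is to prove both inclusions separately. The backward direction is essentially tautological: both $A = S^1 \times D^2$ and $B = S^1 \times P$ are trivially Seifert fibered by the $S^1$-factor, with boundary components that are $2$-tori swept out by $S^1$-fibres, so any gluing of copies of $A$ and $B$ along torus boundaries already presents the resulting closed oriented $M$ as a union of Seifert pieces glued along tori.

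For the forward direction, I would start with a closed oriented graph manifold $M$ and a decomposition $M = \bigcup_i M_i$ along incompressible tori into Seifert pieces. It suffices to exhibit, for each $M_i$, a further cutting along tori into copies of $A$ and $B$. By a standard refinement argument --- in particular, by cutting off non-orientable parts of the base along preimages of suitable two-sided simple closed curves, or by passing to a double cover and equivariantly descending the decomposition --- one reduces to the case in which every base surface $\Sigma_i$ is orientable with non-empty boundary.

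For each such $M_i$, the plan is to first remove small fibred tubular neighbourhoods of all exceptional fibres: each such neighbourhood is a fibred solid torus, hence a copy of $A$, and its boundary is a $2$-torus in $M_i$. What remains is a regular circle bundle over the orientable surface $\Sigma_i'$ obtained from $\Sigma_i$ by deleting an open disc around each singular point. Since $\Sigma_i'$ has non-empty boundary, $H^2(\Sigma_i') = 0$ and the bundle is trivial, so this remainder is diffeomorphic to $S^1 \times \Sigma_i'$. When $\chi(\Sigma_i') \leq -1$, the surface $\Sigma_i'$ admits a pants decomposition, and the cutting curves for such a decomposition, crossed with $S^1$, produce tori that cut $S^1 \times \Sigma_i'$ into copies of $B = S^1 \times P$.

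The main obstacle is the case analysis for low-complexity surfaces $\Sigma_i'$ admitting no pants decomposition, namely the disc and the annulus, together with the non-orientable-base reduction. The disc case is immediate, as $S^1 \times D^2 = A$. For $\Sigma_i' = S^1 \times I$ the piece is $T^2 \times I$; removing a fibred regular solid torus from its interior exhibits it as $A \cup B$, because $(S^1 \times I) \setminus \Int(D^2)$ is a pair of pants. The non-orientable cases can be dealt with by an analogous but more delicate cutting procedure along two-sided curves in the base; once all these case checks are in place, the desired decomposition into copies of $A$ and $B$ follows.
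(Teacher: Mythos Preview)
Your overall strategy matches the paper's: the backward direction is immediate, and for the forward direction one removes solid-torus neighbourhoods of the singular fibres, trivialises the remaining bundle over a bounded orientable surface, and applies a pants decomposition of the base. Your treatment of the orientable-base case, including the disc and annulus, is correct and in fact more explicit than the paper's sketch. (One small omission: when a Seifert piece is closed with no singular fibres you must remove a neighbourhood of a \emph{regular} fibre to create boundary; the paper says this explicitly.)

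The genuine gap is the non-orientable base. Neither of your two proposed reductions works. Passing to a double cover yields an $(A,B)$-decomposition of the cover, not of $M$; there is no equivariant descent here, since the deck involution will not in general preserve the individual $A$- and $B$-pieces or send them to pieces of the same type. And ``cutting along preimages of two-sided curves in the base'' cannot eliminate the non-orientability: in a M\"obius band every two-sided simple closed curve is either inessential or boundary-parallel, so repeated cuts always leave a M\"obius-band piece behind, and the $S^1$-bundle over that piece is neither an $A$ nor a~$B$. The paper's resolution is a specific fact you do not mention: the orientable non-trivial $S^1$-bundle over the M\"obius band admits an \emph{alternative} Seifert fibration, namely over the disc with two singular fibres of multiplicity~$2$. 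One cuts the non-orientable base into an orientable part and M\"obius bands; each bundle over a M\"obius band is then re-fibred over $D^2(2,2)$ and falls under the orientable-base argument (it decomposes as $A\cup A\cup B$). Without this re-fibring step, your argument stalls at the M\"obius-band pieces.
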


Here is the idea of the proof.
It is clear that every manifold with an $(A,B)$-decomposition as described
is a graph manifold. Conversely, one needs to show that
every Seifert fibration over a compact surface has an $(A,B)$-decomposition.
For this one first cuts out solid tori around the singular fibres or ---
if there are no singular fibres --- one solid torus around a regular fibre,
so that the base $\Sigma$ of the fibration has non-empty boundary.
If $\Sigma$ is oriented, the $S^1$-fibration is trivial and can obviously
be obtained by gluing copies of $A$ and~$B$. If the base is non-orientable,
the fibration must restrict to the unique non-trivial $S^1$-bundle over
each M\"obius band contained in~$\Sigma$
(since the total space is orientable). The total space of this bundle has
boundary $T^2$, so we can cut these pieces from the bundle over~$\Sigma$.
Finally, it only remains to observe that the non-trivial
$S^1$-bundle over the M\"obius band has an alternative Seifert fibration
over the disc with two singular fibres of multiplicity~$2$
\cite[Lemma~4.9]{bofo04}; see also \cite[Lemma~4.4]{geth}.

\subsection{Integrable Reeb flows on the building blocks}
\label{subsection:flow-models}
For the proof of Theorem~\ref{thm:bott-graph} we only need the
building blocks~$A$ and~$B$; for the proof of Theorem~\ref{thm:liouville-eq}
we shall have to deal with more general Seifert fibred building blocks
coming from involutions on compact surfaces with boundary.
All building blocks are compact $3$-manifolds with torus boundaries, and
the contact forms we describe presently are Lutz forms near those boundaries.
\subsubsection{The solid torus}
On $S^1\times D^2$ with coordinates $(\theta;r,\varphi)$ we take the contact
form $\alpha_A=\rmd\theta+r^2\,\rmd\varphi$. Its Reeb vector field
is $R_A=\partial_{\theta}$, and a Bott integral is given by
$f(\theta;r,\varphi)=c\pm r^2$.
\subsubsection{Pair of pants times $S^1$}
Let $\Sigma$ be a pair of pants. Fix an orientation on~$\Sigma$.
Let $\varphi_i\in\R/2\pi\Z$, be angular coordinates along the three boundary
components $\partial_i\Sigma$, $i=1,2,3$, compatible with the boundary
orientation. Let $r_i\in(-1,0]$ be a collar parameter near $\partial_i\Sigma$,
with $\partial_i\Sigma=\{r_i=0\}$.

We want to find an exact area form $\omega=\rmd\lambda$ on $\Sigma$, with
$\lambda$ looking like $h_i(r_i)\,\rmd\varphi_i$ near $\partial_i\Sigma$,
with $h_i'(r_i)>0$. This can be done geometrically as follows.
Embed $\Sigma$ into $\R^3$ as a pair of carrot pants with a rather large
waistline as shown in Figure~\ref{figure:carrot-pants}. We may assume
that the three ends have conical shape over the respective boundary circle.
Let $\Omega$ be the area form of the metric induced from~$\R^3$,
and let $Y=\nabla f/|\nabla f|^2$ be the normalised gradient
of the height function~$f$, whose flow preserves the $f$-levels.
The carrot shape guarantees that $Y$ has positive divergence with respect
to~$\Omega$, so
\[ \rmd(i_Y\Omega)=\mathrm{div}_{\Omega}(Y)\Omega\]
is an exact area form, and $\Sigma$ having conical ends means that the
primitive $\lambda:=i_Y\Omega$ looks as desired near~$\partial\Sigma$.

On $B=S^1\times\Sigma$ we then take the contact form $\alpha_B=\rmd\theta
+\lambda$, with Reeb vector field $R_B=\partial_{\theta}$
and Bott integral~$f$.

\begin{figure}[h]
\centering
\includegraphics[scale=0.3]{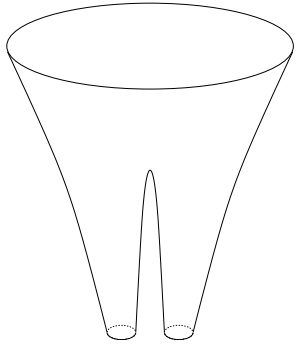}
  \caption{Carrot pants.}
  \label{figure:carrot-pants}
\end{figure}
\subsubsection{Seifert fibred building blocks}
\label{subsubsection:general}
In order to describe Hamiltonian or Reeb flows on $3$-manifolds
up to Liouville equivalence, we need to work with more general
building blocks. We continue to use the building block~$A=S^1\times D^2$, with
Liouville foliation defined by the lift of $f$, i.e.\ the natural foliation
of $S^1\times D^2$ with one singular leaf $S^1\times\{0\}$ and concentric
torus leaves.

Building block $B$ will be subsumed by the following more
general construction.
Consider a surface $\Sigma$ with a Morse function
\[ f\co\Sigma\longrightarrow [c-\varepsilon,c+\varepsilon]\]
mapping surjectively onto $[c-\varepsilon,c+\varepsilon]$,
such that
\[ \partial\Sigma=\{f=c\pm\varepsilon\},\]
and with $c$ being the only potentially critical value of~$f$.
In particular, $\Sigma$ will be of genus~$0$.
This gives rise to a building
block $S^1\times\Sigma$, and as Liouville foliation
with at most one singular leaf we take the one
defined by the lift of $f$ to $S^1\times\Sigma$. Notice that collar
neighbourhoods of the boundary of this $3$-manifold will be foliated
by regular tori.

Further building blocks come from surfaces $\Sigma$ that, in addition,
admit an orien\-ta\-tion-preserving involution $\tau\co\Sigma\rightarrow\Sigma$
with the properties that
\begin{itemize}
\item[(i)] $\tau$ preserves $f$, that is, $f\circ\tau=f$; and
\item[(ii)] $\tau$ has only finitely many isolated fixed points,
all being critical points of~$f$.
\end{itemize}

As an example, you may take the pair of pants in
Figure~\ref{figure:carrot-pants} with $\tau$ the rotation through
an angle $\pi$ about the vertical symmetry axis through the
critical point of the height function.

The mapping torus
\[ M_{\tau}:=[0,\pi]\times\Sigma/(\pi,x)\sim(0,\tau(x))\]
is then a Seifert fibred space over the orbifold quotient $\Sigma/\tau$,
with singular fibres of multiplicity~$2$ corresponding to the
fixed points of~$\tau$. Again, we use the lift of $f$ to define
the Liouville foliation on $M_{\tau}$.

As shown in Chapters 3 and~4 of~\cite{bofo04} (notably
Theorems 4.1 and~4.2), any topologically stable
Hamiltonian flow on a $3$-dimensional energy surface of a
Bott-integrable system is Liouville equivalent to one obtained by
gluing such building blocks.

The construction of a contact form on these general building blocks
is based on the following lemma, which we formulate in a way that is
sufficiently general for other applications further below.

\begin{lem}
\label{lem:exact-area}
Let $\Sigma$ be a compact, oriented surface with non-empty
boundary. Choose collar coordinates $(r_i,\varphi_i)\in (-1,0]
\times\R/2\pi\Z$ near each boundary component $\partial_i\Sigma=
\{r_i=0\}$, where $i$ runs over a finite index set. It is understood
that $\rmd\varphi_i$ defines the orientation $\partial_i\Sigma$
as oriented boundary of~$\Sigma$. Let
\[ \lambda_i=\rho_i(r_i)\,\rmd\varphi_i\]
be $1$-forms on these collar neighbourhoods with $\rho_i'>0$,
so that the $\rmd\lambda_i$ are (positive) area forms on the collars.

If $\sum_i\int_{\partial_i\Sigma}\lambda_i>0$, there is
an exact area form $\omega=\rmd\lambda$ on $\Sigma$ with
$\lambda=\lambda_i$ near $\partial_i\Sigma$.
\end{lem}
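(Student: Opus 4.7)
My plan is to extend the prescribed collar data to a global $1$-form on $\Sigma$, and then correct the resulting $2$-form to a genuine positive area form by means of a compactly supported exact term in the interior. Choose smooth cut-off functions $\chi_i\co\Sigma\to[0,1]$ with pairwise disjoint supports contained in $U_i$ and equal to~$1$ on a smaller sub-collar $U_i''\subset U_i$; extending each $\chi_i\lambda_i$ by zero, set $\tilde\lambda:=\sum_i\chi_i\lambda_i$, a global smooth $1$-form on $\Sigma$ agreeing with $\lambda_i$ on $U_i''$. Stokes' theorem then yields
\[
\int_\Sigma\rmd\tilde\lambda
\;=\;\int_{\partial\Sigma}\tilde\lambda
\;=\;\sum_i\int_{\partial_i\Sigma}\lambda_i\;=:\;C>0.
\]

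The crux of the proof is to produce a positive area form $\omega$ on $\Sigma$ that coincides with $\rmd\lambda_i$ on each $U_i''$ and has total integral equal to~$C$. Fix an auxiliary positive area form $\omega_0$ on $\Sigma$ (say, from a Riemannian metric) and consider
\[
\omega_c\;:=\;\sum_i\chi_i\,\rmd\lambda_i\;+\;c\Bigl(1-\sum_i\chi_i\Bigr)\omega_0,\qquad c>0.
\]
Each summand is a pointwise non-negative $2$-form, and at every $p\in\Sigma$ at least one of the coefficients $\chi_i(p)$ (for some~$i$) or $1-\sum_j\chi_j(p)$ is strictly positive, so $\omega_c$ is a positive area form for every $c>0$, and equals $\rmd\lambda_i$ on $U_i''$ by construction. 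The integral $\int_\Sigma\omega_c$ is affine in~$c$ with positive slope $\int_\Sigma(1-\sum_i\chi_i)\omega_0$ and constant term $A:=\sum_i\int_{U_i}\chi_i\,\rmd\lambda_i$. Since $\int_{U_i}\chi_i\,\rmd\lambda_i\to0$ as $\operatorname{supp}\chi_i$ shrinks to $\partial_i\Sigma$, one may pick the $\chi_i$ so that $A<C$; a unique $c>0$ then satisfies $\int_\Sigma\omega_c=C$. The sign hypothesis $C>0$ enters essentially here: without it no positive area form on~$\Sigma$ could meet the prescribed boundary behaviour.

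With $\omega:=\omega_c$, the $2$-form $\eta:=\omega-\rmd\tilde\lambda$ is supported in the compact set $\Sigma\setminus\bigcup_iU_i''$ (both $\omega$ and $\rmd\tilde\lambda$ equal $\rmd\lambda_i$ on $U_i''$) and has $\int_\Sigma\eta=C-C=0$. Assuming $\Sigma$ is connected (the disconnected case reduces to this, component by component, provided the hypothesis holds on each component), the open interior $\Sigma^\circ$ is a connected oriented surface for which integration induces an isomorphism $H^2_{\mathrm{c}}(\Sigma^\circ)\cong\mathbb{R}$; hence $\eta=\rmd\mu$ for some compactly supported $1$-form~$\mu$ on~$\Sigma^\circ$, which we extend by zero to~$\Sigma$. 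Then $\lambda:=\tilde\lambda+\mu$ equals $\lambda_i$ on each collar $U_i''$ and satisfies $\rmd\lambda=\rmd\tilde\lambda+\eta=\omega$, a positive area form on~$\Sigma$. The main obstacle is the middle step---simultaneously achieving positivity, the prescribed collar behaviour, and the correct total integral---which is precisely the point at which the hypothesis $C>0$ is used.
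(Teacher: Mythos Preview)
Your proof is correct and follows essentially the same route as the paper's: extend the $\lambda_i$ to a global $1$-form, choose a positive area form $\omega$ agreeing with $\rmd\lambda_i$ near the boundary and having total integral $\sum_i\int_{\partial_i\Sigma}\lambda_i$, and then use the vanishing of the compactly supported class $[\omega-\rmd\tilde\lambda]\in H^2_{\mathrm c}(\Int\Sigma)$ to find a compactly supported primitive. The paper simply asserts the existence of such an~$\omega$, whereas you spell out an explicit construction via $\omega_c=\sum_i\chi_i\,\rmd\lambda_i+c(1-\sum_i\chi_i)\omega_0$ and the shrinking-of-supports argument; your caveat about connectedness is also a point the paper leaves implicit.
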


\begin{proof}
The integral condition allows us to choose
an area form $\omega$ on $\Sigma$ that coincides with
$\rmd\lambda_i$ near $\partial_i\Sigma$ and satisfies
\[ \int_{\Sigma}\omega=\sum_i\int_{\partial_i\Sigma}\lambda_i.\]
Let $\lambda_{\partial}$ be an extension of the $\lambda_i$ to a global $1$-form
on~$\Sigma$. Then $\omega-\rmd\lambda_{\partial}$ is a $2$-form
compactly supported in the interior of~$\Sigma$, i.e.\
an element of $\Omega^2_{\mathrm{c}}(\Int(\Sigma))$.

From de Rham theory for compactly supported forms one knows that
the sequence
\[ \Omega^1_{\mathrm{c}}(\Int(\Sigma))\stackrel{\rmd}{\longrightarrow}
\Omega^2_{\mathrm{c}}(\Int(\Sigma))
\stackrel{\int_{\Sigma}}{\longrightarrow}\R\]
is exact; see \cite[Theorem~10.13]{mato97} or
\cite[Corollary~5.8]{botu82}. Thus, from
\[ \int_{\Sigma}(\omega-\rmd\lambda_{\partial})=
\int_{\Sigma}\omega-\int_{\partial\Sigma}\lambda_{\partial}=
\int_{\Sigma}\omega-\sum_i\int_{\partial_i\Sigma}\lambda_i=0\]
we conclude that $\omega-\rmd\lambda_{\partial}=\rmd\lambda_{\mathrm{c}}$
for some compactly supported $1$-form $\lambda_{\mathrm{c}}$. Then
$\lambda:=\lambda_{\partial}+\lambda_{\mathrm{c}}$ is a primitive of the
area form~$\omega$ having the desired properties.
\end{proof}

This lemma can easily be adapted to the situation where $\Sigma$
admits an involution $\tau$ as above. Up to isotopy we may
assume that $\tau$ rotates a given collar $(-1,0]\times\partial_i\Sigma$
through an angle~$\pi$, or, if $\tau$ exchanges the collars
of $\partial_i\Sigma$ and $\partial_j\Sigma$, 
that it is the identity map in terms of the collar coordinates
$(r_i,\varphi_i)$ and $(r_j,\varphi_j)$. In the latter case,
one needs to assume that $\rho_i=\rho_j$ in the definition of
$\lambda_i,\lambda_j$. Then the argument goes through as before
and yields a $\tau$-invariant area form $\omega$ with a $\tau$-invariant
primitive $\lambda$ prescribed near the boundary.

It follows that the contact form $\rmd\theta+\lambda$ on
$[0,1]\times\Sigma$ descends to a contact form on the mapping torus
$M_{\tau}$ with Bott integral~$f$.
\subsection{The sewing lemma}
The following is the simple Reeb analogue of the sewing lemma for
integrable $4$-dimensional Hamiltonian systems~\cite[Lemma~4.7]{bofo04}.
It will allow us to glue the building blocks introduced above.

\begin{lem}[Sewing lemma]
\label{lem:sewing}
Let $\alpha=h_1(r)\,\rmd x_1+h_2(r)\,\rmd x_2$ be a Lutz form on
\[ \bigl([-1,-1+\varepsilon]\cup[1-\varepsilon,1]\bigr)\times T^2,\]
for some $\varepsilon\in (0,1)$. Then $\alpha$ extends to a Lutz form
on all of $[-1,1]\times T^2$.
\end{lem}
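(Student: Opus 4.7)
The plan is to recast the contact condition in polar form and then interpolate the radial and angular profiles independently on the middle interval $[-1+\varepsilon,1-\varepsilon]$.

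First I would observe that the contact condition $\Delta<0$ in~\eqref{eqn:Delta} forces $(h_1,h_2)\neq(0,0)$ along each collar, so the curve $r\mapsto (h_1(r),h_2(r))$ admits a smooth polar representation $(h_1,h_2)=\rho(\cos\theta,\sin\theta)$ with $\rho>0$, and with $\theta\in\R$ well defined up to a global additive constant $2\pi k$ on each connected collar. A direct computation yields $\Delta=\rho^2\theta'$, so being a Lutz form is equivalent to $\theta'<0$ everywhere. The task thus reduces to extending a smooth positive function $\rho$ and a smooth strictly decreasing real-valued function $\theta$ from $[-1,-1+\varepsilon]\cup[1-\varepsilon,1]$ to all of $[-1,1]$.

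Next I would extend $\rho$ by any smooth positive interpolation that matches all derivatives of the given $\rho$ at $r=-1+\varepsilon$ and $r=1-\varepsilon$; the Lutz condition puts no further constraint on~$\rho$. For $\theta$, I would first shift the lift chosen on the right collar by $-2\pi k$ for sufficiently large $k\in\N$, so as to guarantee the inequality $\theta(1-\varepsilon)<\theta(-1+\varepsilon)$. Then I would construct the missing segment of $\theta'$ on $[-1+\varepsilon,1-\varepsilon]$ as a smooth strictly negative function that agrees with the prescribed $\theta'$ to all orders at the two endpoints and whose integral over this subinterval equals the (negative) number $\theta(1-\varepsilon)-\theta(-1+\varepsilon)$. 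Such a function is easy to produce: start with any smooth strictly negative extension of the prescribed boundary jets, and then correct its total mass by adding a strictly negative bump compactly supported in the interior, which only makes the function more negative.

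Setting $h_1(r)=\rho(r)\cos\theta(r)$ and $h_2(r)=\rho(r)\sin\theta(r)$ then yields a Lutz form on $[-1,1]\times T^2$ that agrees with the given one on the two collar regions. The only point requiring care is the need to insert additional angular winding on the middle via the $2\pi k$ shift on the right collar; without this flexibility the prescribed angles might fail to admit any strictly decreasing smooth interpolation, and this is the main (though quite mild) obstacle in the argument.
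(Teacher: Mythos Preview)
Your argument is correct and amounts to the analytic content of the paper's one-line geometric proof: the paper simply points to a picture showing that the two collar curves, which by $\Delta<0$ wind clockwise around the origin in the $(h_1,h_2)$-plane, can always be joined by a further clockwise arc (inserting as many extra full turns as needed), and your polar identity $\Delta=\rho^2\theta'$ is exactly what makes this rigorous. One tiny quibble: your bump correction can only \emph{decrease} the integral of $\theta'$, so you should either choose $k$ after fixing the initial negative extension of~$\theta'$, or note that this initial extension can be taken with integral arbitrarily close to~$0$.
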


\begin{proof}
One can always interpolate the curves $r\mapsto (h_1(r),h_2(r))$ defined
on
\[ [-1,-1+\varepsilon]\;\;\;\text{and}\;\;\; [1-\varepsilon,1] \]
subject to the condition $\Delta<0$, as shown in Figure~\ref{figure:sewing}.
\end{proof}

\begin{figure}[h]
\labellist
\small\hair 2pt
\pinlabel $h_1$ [t] at 427 217
\pinlabel $h_2$ [r] at 216 424
\endlabellist
\centering
\includegraphics[scale=0.4]{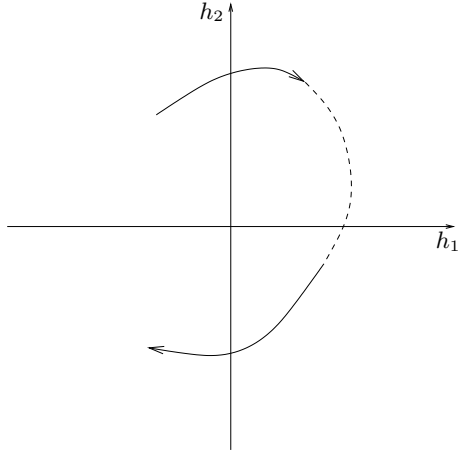}
  \caption{Interpolating Lutz forms.}
  \label{figure:sewing}
\end{figure}
\subsection{Graph manifolds admit integrable Reeb flows}
\label{subsection:thm1}
The topological classification of $3$-manifolds admitting
Bott-integrable Reeb flows is now a straightforward consequence.

\begin{proof}[Proof of Theorem~\ref{thm:bott-graph}]
Given a graph manifold, we present it as a gluing of a finite
number of the building blocks $A$ and~$B$ as in
Proposition~\ref{prop:graph-AB}. The gluing of any two torus
boundaries $T^2_{\pm}$ (oriented as
the boundaries of the corresponding building block) is described by an
orientation-\emph{reversing} diffeomorphism $\phi\co T^2_-\rightarrow T^2_+$.
Up to diffeomorphism of the resulting manifold, the gluing
may be effected by inserting a thickened torus. Explicitly,
we identify collars of  $(-1,1)\times T^2$ with collar neighbourhoods
of $T^2_{\pm}$ via
the orientation-\emph{preserving} diffeomorphisms
\[ \begin{array}{rccc}
\Phi_-\co & (-1,-1+\varepsilon]\times T^2 & \longrightarrow
   & (-1,0]\times T^2_-\\
          & (r,x)                         & \longmapsto
   & \bigl((r+1-\varepsilon)/\varepsilon,x\bigr)
\end{array}\]
and
\[ \begin{array}{rccc}
\Phi_+\co & [1-\varepsilon,1)\times T^2 & \longrightarrow
   & (-1,0]\times T^2_+\\
          & (r,x)                       & \longmapsto
   & \bigl(-(r-1+\varepsilon)/\varepsilon,\phi(x)\bigr).
\end{array}\]
By Section~\ref{subsection:flow-models} we may assume that
the contact forms on the two building blocks we want to glue
are Lutz forms $\alpha_{\pm}$ on the collar neighbourhoods of $T^2_{\pm}$.
Then $\Phi_{\pm}^*\alpha_{\pm}$ are likewise Lutz forms.
By the sewing lemma they extend to $(-1,1)\times T^2$.

On the collar neighbourhoods of $T^2_{\pm}$, the Bott integral
is a non-critical function of the collar parameter only, which under
$\Phi_{\pm}$ pulls back to a function of~$r$. We extend this to
a function on the whole interval $(-1,1)$, possibly with a
single critical point at $r=0$, which would create a critical
$2$-torus in the glued $3$-manifold.
\end{proof}
\subsection{Liouville equivalence}
\label{subsection:liouville-eq}
As mentioned in the introduction, and explained in detail
in the monograph~\cite{bofo04}, any $3$-manifold arising as
an energy hypersurface of a Bott-integrable Hamiltonian flow
has a decomposition --- topologically --- into $A$ and $B$ pieces.
In the preceding section we have seen how to construct
a Bott-integrable Reeb flow from such a decomposition.
However, the Liouville foliation of the Bott integral thus
constructed may not be the Liouville foliation of the
original system. For instance, some of the $A$ pieces may come
from neighbourhoods of hyperbolic critical Reeb orbits,
i.e.\ orbits where the Bott integral is $f(x,y)=c+x^2-y^2$
in terms of transverse cartesian coordinates $(x,y)$. Our construction,
by contrast, will always produce a foliation of the $A$ pieces by
concentric $2$-tori.

This is where the more general building blocks from
Section~\ref{subsubsection:general} come into play, for the
decomposition into these pieces respects the Liouville
foliation of a given Hamiltonian flow, as explained in~\cite{bofo04}.
Now the proof of
Theorem~\ref{thm:bott-graph} applies equally to these general
building blocks and produces a Bott-integrable Reeb flow
with the previously given Liouville foliation. The critical tori
we had to introduce are irrelevant for the Liouville
equivalence, and indeed they should not be required now, since
the gluing of the building blocks is consistent with the
original Bott function, so the monotonicity of the Bott
integral on two respective collars will be respected by the gluing.

This concludes the proof of Theorem~\ref{thm:liouville-eq}.
\section{Integrability of $S^1$-invariant contact structures}
\label{section:S1invariant}
In this section we prove Theorem~\ref{thm:seifert},
but we begin with an example.
\subsection{Liouville--Cartan and connection forms}
Write $\lambda_1$ for the Liouville--Car\-tan form on the unit cotangent
bundle $ST^*\Sigma_g$ of the closed, oriented surface $\Sigma_g$ of
genus~$g$ (with some Riemannian metric chosen on~$\Sigma_g$).
Recall the example from Proposition~\ref{prop:no-bott}:
$\ker\lambda_1$ does not admit a Bott-integrable Reeb flow for
$g\geq 2$. Of course, every $S^1$-bundle
is a graph manifold, so by Theorem~\ref{thm:bott-graph}, $ST^*\Sigma_g$
carries \emph{some} (positive) contact structure that admits a Bott-integrable
Reeb flow.

Recall that there is a second Liouville--Cartan form $\lambda_2$
on $ST^*\Sigma_g$, canonically defined in terms of the Riemannian metric,
such that $\lambda_1\wedge\lambda_2$ is the lift of the (positive)
area form defined by the Riemannian metric and orientation on~$\Sigma_g$.
Together with the Riemannian connection $1$-form~$\alpha$,
the following structure equations are
satisfied, cf.~\cite[Section~2.1]{agz18}. Here $K$ denotes the
Gau{\ss} curvature of the Riemannian metric on~$\Sigma_g$, and
$\pi\co ST^*\Sigma_g\rightarrow\Sigma_g$ the bundle projection:
\begin{eqnarray*}
\rmd\lambda_1 & = & \lambda_2\wedge\alpha,\\
\rmd\lambda_2 & = & \alpha\wedge\lambda_1,\\
\rmd\alpha    & = & (\pi^*K)\,\lambda_1\wedge\lambda_2.
\end{eqnarray*}
Thus, for a metric of everywhere negative curvature on~$\Sigma_g$, $g\geq 2$,
the connection $1$-form $\alpha$ is a contact form.
In that case, the Reeb vector field $R_{\alpha}$
defines the $S^1$-fibration, so any Morse function on $\Sigma_g$ lifts
to a Bott integral for~$R_{\alpha}$.

However, with
respect to the orientation of $ST^*\Sigma_g$ defined by $\alpha\wedge
\lambda_1\wedge\lambda_2$, the contact structures $\ker\lambda_i$
are positive, whereas $\ker\alpha$ is negative. 
\subsection{Bott-integrable contact structures on $S^1$-bundles}
As an instructive special case of Theorem~\ref{thm:seifert}
(and Theorem~\ref{thm:bott-graph}),
we briefly discuss the construction of $S^1$-invariant
contact structures admitting a Bott-integrable Reeb flows
on arbitrary principal $S^1$-bundles $\pi\co M\rightarrow\Sigma_g$, $g\geq 0$.

Given a connection $1$-form $\alpha$ on $M$, its curvature form
is the $2$-form $\omega$ on $\Sigma_g$ such that $\rmd\alpha=\pi^*\omega$.
The de Rham cohomology class of $\omega$ is related to the Euler class $e$
of the bundle by $e=-[\omega/2\pi]\in H^2(\Sigma_g;\Z)\subset
H^2_{\mathrm{dR}}(\Sigma_g)$; see~\cite[Section 7.2]{geig08}, or
\cite{kela21}, where these concepts are extended to Seifert bundles
over orbifolds. Recall that $e(ST\Sigma_g)=\chi(\Sigma_g)=2-2g$,
and $e(ST^*\Sigma_g)=2g-2$. 

Conversely, given any $2$-form $\omega$ on $\Sigma_g$ with
$e:=-[\omega/2\pi]$ integral, there is a connection $1$-form on the
$S^1$-bundle of Euler class $e$ with curvature form~$\omega$.
It is clear, then, that one can find a connection $1$-form defining
a positive contact structure if and only if $e<0$.
Such contact forms admit a Bott integral as described in the
preceding section.

For $e\geq 0$ (e.g.\ for $ST^*\Sigma_g$ with $g\geq 1$), one needs
a more general construction.

\begin{prop}
Any principal  $S^1$-bundle over $\Sigma_g$ carries an $S^1$-invariant
positive contact form with a Bott-integrable Reeb flow.
\end{prop}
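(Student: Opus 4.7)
The plan is to cut $M$ along a single invariant torus into a fibered solid torus and a trivial $S^1$-bundle, build $S^1$-invariant Lutz forms on the two pieces, and sew them together. First I would fix a point $p\in\Sigma_g$ and a small disk $D$ about~$p$, and set $V:=\pi^{-1}(D)\cong S^1\times D^2$ and $M^\circ:=\pi^{-1}(\Sigma_g^\circ)$, where $\Sigma_g^\circ:=\Sigma_g\setminus\Int D$. Since $\Sigma_g^\circ$ deformation retracts onto a wedge of $2g$ circles, $\pi|_{M^\circ}$ is trivialisable and $M^\circ\cong S^1\times\Sigma_g^\circ$. Writing $\theta$ and $\theta_V$ for the respective $S^1$-fibre coordinates, the bundle action on each piece is rotation of that coordinate, and $S^1$-equivariance of the gluing $\phi\co\partial V\to\partial M^\circ$ forces its matrix in natural integral bases to be of the form $\bigl(\begin{smallmatrix}1&e\\0&-1\end{smallmatrix}\bigr)$, with $e\in\Z$ encoding the Euler number.

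Next I would apply Lemma~\ref{lem:exact-area} on $\Sigma_g^\circ$ to produce a primitive $\lambda$ of a positive area form with $\lambda=\rho(r_0)\,\rmd\varphi_\circ$ in a collar of $\partial\Sigma_g^\circ$, where $\rho'>0$ and the boundary value $\rho(0)>0$ is at my disposal. Then $\alpha^\circ:=\rmd\theta+\lambda$ is an $S^1$-invariant positive contact form on $M^\circ$ with Reeb vector field $\partial_\theta$, and any Morse function on $\Sigma_g^\circ$ that is constant near the boundary lifts to an $S^1$-invariant Bott integral. On $V$ I would take the standard model $\alpha^V:=\rmd\theta_V+r^2\,\rmd\varphi$ with Reeb $\partial_{\theta_V}$ and Bott integral $c_0-r^2$, which is again $S^1$-invariant.

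To combine the pieces, I would transport $\alpha^V$ through $\phi$: near the common boundary, both $\alpha^\circ$ and $\phi^*\alpha^V$ take the form $h_1(s)\,\rmd\theta+h_2(s)\,\rmd\varphi_\circ$ with coefficients depending only on a collar parameter~$s$, hence invariant under rotation of~$\theta$. Inserting a thickened torus and applying the sewing lemma (Lemma~\ref{lem:sewing}) would extend the contact form across this interval, and the extension inherits $S^1$-invariance from the fibre-independent shape of the interpolation. The Bott integral extends through the sewing region as a function of~$s$ alone, matching the constant boundary values of $f^\circ|_{\partial\Sigma_g^\circ}$ and $c_0-1$.

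The main point to verify will be that the two prescribed boundary data admit a valid Lutz interpolation, i.e.\ a curve in the $(h_1,h_2)$-plane with $\Delta<0$ throughout. The $V$-side endpoint works out to $(1,e-1)$ after pullback through $\phi$, while the $M^\circ$-side reads $(1,\rho(0))$ with $\rho(0)>0$ free; since neither endpoint sits at the origin and $h_1=1$ at both, the required interpolation exists for every integer Euler number~$e$ by a suitable choice of $\rho(0)$.
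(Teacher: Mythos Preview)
Your proof is correct and follows essentially the same route as the paper: decompose $M$ into the trivial bundle $S^1\times\Sigma_g^\circ$ and a solid torus, use Lemma~\ref{lem:exact-area} to build $\rmd\theta+\lambda$ on the former, and extend by Lutz forms over the latter, with the Bott integral coming from a Morse function on the base. The only cosmetic difference is that the paper extends the Lutz form directly into the solid torus (making a case distinction on the sign of~$e$ that reveals when the contact structure stays transverse to the fibres), whereas you fix the standard model on $V$ first and invoke the sewing lemma to bridge the gap --- note that the sewing lemma works for \emph{any} pair of nonzero Lutz endpoints, so your final caveat about choosing $\rho(0)$ suitably is unnecessary.
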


\begin{proof}
We describe the $S^1$-bundle $M$ over $\Sigma_g$ of Euler class $e\in\Z$ as
the gluing
\[ M=\Bigl(\bigl((\Sigma_g\setminus\Int(D^2))\times S^1\bigl)
+(D^2\times S^1)\Bigl)/\!\sim,\]
where $D^2$ is a $2$-disc embedded in $\Sigma_g$,
and $\sim$ denotes the following boundary identification. Write
\[ S^1_0=-\partial(\Sigma_g\setminus\Int(D^2)) \]
for the boundary circle with the \emph{opposite} of the orientation
induced as the boundary of $\Sigma_g\setminus\Int(D^2)$. In the boundary of
\[ M':=(\Sigma_g\setminus\Int(D^2))\times S^1\]
we have the two curves
\[ q:= S^1_0\times\{*\}\;\;\;\text{and}\;\;\; h:=\{*\}\times S^1.\]
On $\partial (D^2\times S^1)$ we have meridian and longitude,
\[ \mu:=\partial D^2\times\{*\}\;\;\;\text{and}\;\;\;
\lambda:=\{*\}\times S^1=1,\]
with $*$ a point on~$\partial D^2$ in the definition of~$\lambda$.
Then, as discussed in \cite{gela18}, for instance, the identification
that gives the desired bundle $M\rightarrow\Sigma_g$ is
\[ \mu=q-eh,\;\;\; \lambda=h.\]
In other words, the gluing may be described by the map
\[ \Phi\co\partial D^2\times S^1\ni(\varphi,\theta)\longmapsto
(\varphi,-e\varphi+\theta)\in S^1_0\times S^1.\]
Notice that the fibre class $h$ becomes identified with
the longitude~$\lambda$, so the $S^1$-fibration of $M'$
and that of $D^2\times S^1$, both given by projection onto
the first factor, define the global $S^1$-fibration of~$M$.

By Lemma~\ref{lem:exact-area}, we have a contact form $\alpha=\rmd\theta+
\lambda$, with $\lambda=\rho(r)\,\rmd\varphi$ on the collar
\[ S^1_0\times[1,1+\varepsilon) \subset (\Sigma_g\setminus\Int(D^2)),\]
where $S^1_0\times{1}\equiv-\partial(\Sigma_g\setminus\Int(D^2))$.
Because of our orientation conventions, the conditions in
Lemma~\ref{lem:exact-area} translate into $\rho'>0$ and $\rho(1)<0$.
The form $\alpha$ pulls back to
\[ \Phi^*\alpha=\rmd\theta+(\rho(r)-e)\,\rmd\varphi. \]
For $e\geq 0$, we have $\rho(1)-e<0$; for $e<0$, we may choose $\rho$ such
that $\rho(1)-e$ is positive.

In either case, we find a contact form $\alpha_M$
on $M$ by extending $\Phi^*\alpha$ as a Lutz form
$h_1(r)\,\rmd\theta+h_2(r)\,\rmd\varphi$
over $D^2\times S^1$. Beware that the ambient orientation is now
given by $\rmd\theta\wedge r\,\rmd r\wedge\rmd\varphi$,
so the \emph{positive} contact condition becomes $\Delta>0$,
in contrast with~\eqref{eqn:Delta}, so the curve $r\mapsto (h_1(r),h_2(r))$
has to wind around the origin in counterclockwise direction.

In the case $e<0$, we may extend $\phi^*\alpha$ in the form
$\rmd\theta+h_2(r)\,\rmd\varphi$, with $h_2(r)=r^2$ near $r=0$.
In the case $e\geq 0$, in order to realise $h_2(r)=\pm r^2$ near
$r=0$, one necessarily has to choose a curve
$r\mapsto (h_1(r),h_2(r))$ with $h_1$ having at least one zero;
with $h_1(r)=-1$ and $h_2(r)=-r^2$ near $r=0$, a single zero
suffices. In other words, in the case $e\geq 0$ the $S^1$-invariant
contact structure $\ker\alpha_M$ necessarily becomes tangent
to the fibres over at least one circle in the base $\Sigma_g$.

Any Morse function on $\Sigma_g$ that on $D^2$ is a function of $r$ only
will lift to a Bott integral
for the Reeb flow of~$\alpha_M$.
\end{proof}

\begin{rem}
It is not difficult to see that any $S^1$-invariant contact
structure on an $S^1$-bundle with $e\geq 0$ necessarily has to
be tangent to the fibres somewhere, see
\cite[Section~1]{lutz77}. Without the $S^1$-invariance, the
necessary and sufficient condition for finding a contact structure
transverse to the fibres weakens to $e\leq 2g-2$ for $g\geq 1$,
as shown by Giroux~\cite{giro01}.
\end{rem}
\subsection{Invariant contact structures on Seifert manifolds}
We now show that any contact structure invariant under a fixed-point
free $S^1$-action admits a Bott-integrable Reeb flow.

\begin{proof}[Proof of Theorem~\ref{thm:seifert}]
Given any contact structure $\xi$ invariant under the $S^1$-action,
by averaging (over~$S^1$) any contact form defining $\xi$, we may
assume that $\xi=\ker\alpha$ with $\alpha$ an $S^1$-invariant contact form.
We want to show that by rescaling $\alpha$ with a suitable
$S^1$-invariant positive function, we can obtain an $S^1$-invariant
contact form with Bott-integrable Reeb flow.

Write $\partial_{\theta}$ for the vector field inducing
the $S^1$-action. Then the function $u:=\alpha(\partial_{\theta})$
on $M$ is $S^1$-invariant. Let $\gamma$ be a connection $1$-form
for the $S^1$-action, that is, $\gamma$ is supposed to be $S^1$-invariant
and $\gamma(\partial_{\theta})=1$. Locally near any fibre such
a connection $1$-form exists, and one can patch them together
using an $S^1$-invariant partition of unity. 

Set $\beta:=\alpha-u\gamma$. This is a so-called \emph{basic}
form (cf.\ \cite{geig22}) for the Seifert fibration, i.e.\
\[ i_{\partial_{\theta}}\beta=0\;\;\;\text{and}\;\;\;
i_{\partial_{\theta}}\rmd\beta=0.\]
For an honest $S^1$-bundle, being basic would mean that
the form is a lift from the base. In the Seifert setting,
for a form to be basic means that it induces a well-defined
form on any (local) surface transverse to the Seifert fibration.
Notice that the $1$-form $\rmd u$ and $2$-form $\rmd\gamma$, too, are basic.

The exterior derivative of a basic $1$-form is basic; so is
the wedge product of basic forms. Also, a basic $3$-form on $M$
is clearly trivial. It follows that
\[ \alpha\wedge\rmd\alpha=\gamma\wedge(u\,\rmd\beta+
\beta\wedge\rmd u).\]
Thus, the contact condition translates into
\[ u\,\rmd\beta+\beta\wedge\rmd u>0,\]
by which we mean that this basic 2-form defines
a positive area form on transversals.

We conclude that $\rmd u\neq 0$ along $\{u=0\}$, the
$S^1$-invariant set where the contact structure is tangent to the
Seifert fibre, which implies that
the zero-level set of $u$ is a compact $2$-dimensional
submanifold of~$M$. Observe the analogy with the work of
Lutz~\cite{lutz77} for $S^1$-bundles and the notion of
`dividing set' on convex surfaces
in the sense of Giroux~\cite{giro91}.

We claim that the surface $\{u=0\}$ does not contain any
singular fibres of the Seifert fibration. Indeed, if the
fibre through some point $p\in M$ were tangent to $\ker\alpha$,
then $\alpha_p$ would induce a well-defined non-trivial
linear form on the quotient vector space $T_pM/\langle
\partial_{\theta}\rangle$. But this linear form would
have to be invariant under the action of the finite
cyclic isotropy group of~$p$, which is impossible.

Next we wish to construct an $S^1$-invariant Morse--Bott function
$f$ on $M$ with the following properties:
\begin{itemize}
\item[-] $f$ coincides with $u$ near $\{u=0\}$;
\item[-] $\sign(f)=\sign(u)$;
\item[-] $\Crit(f)$ consists of finitely many
Seifert fibres, including all singular ones.
\end{itemize}
The base orbifold $\Sigma$ of the Seifert fibration is divided
by the $S^1$-quotient of $\{u=0\}$ into compact surfaces
$\Sigma_{\pm}$ (with boundary) over which $u$ is positive or
negative, respectively. (Of course, the set $\{u=0\}$
may be empty, in which case $\Sigma$ coincides with one
of $\Sigma_{\pm}$.) All the singular points of this
orbifold lie in the interior of these surfaces.
Near the singular fibres, we prescribe $f$ to look like
$\pm c\mp r^2$, with $c$ some positive constant,
in terms of a radial coordinate adapted to
the Seifert fibration, so that the regular fibres
near the singular fibre $\{r=0\}$ foliate concentric
tori $\{r=r_0\}$. The induced function on $\Sigma_{\pm}$
can be extended to a Morse function that coincides
with $u$ near the boundary and takes positive resp.\
negative values on~$\Sigma_{\pm}$. This function, in turn, lifts
to the desired function~$f$.

The function $g:=f/u$ on $M$ is smooth and positive, and we set
$\alpha':=g\alpha$. This is again an $S^1$-invariant contact form
defining~$\xi$, and it satisfies $\alpha'(\partial_{\theta})=f$.
With the Cartan formula for the Lie derivative, the invariance of
$\alpha'$ translates into
\[ i_{\partial_{\theta}}\rmd\alpha'+\rmd f=0;\]
thus, with $R'$ denoting the Reeb vector field of~$\alpha'$,
we have
\[ \rmd f(R')=-\rmd\alpha(\partial_{\theta},R')=0. \]
This concludes the proof that $\xi$ is Bott integrable.
\end{proof}

\begin{rem}
For the construction of the $S^1$-invariant Morse--Bott function one
could also appeal to the results of Wasserman~\cite{wass69} on the denseness
of $G$-invariant Morse functions ($G$ any compact Lie group).
\end{rem}
\section{$S^1$-invariant contact structures on~$S^3$}
\label{section:S1-invariant}
In preparation of the proof of Theorem~\ref{thm:S3T3}
for the $3$-sphere, in this section we take a closer look
at the work of Lutz~\cite{lutz77} on $S^1$-invariant contact
structures on~$S^3$.
\subsection{An invariant trivialisation of $T^*S^3$}
We may think of the $3$-sphere as the unit sphere $S^3\subset\bbH$
in the quaternions. With $I,J,K$ denoting the bundle maps on
$T\bbH$ corresponding to the standard unit quaternions
$\rmi,\rmj,\rmk$, and $r$ the radial coordinate on~$\bbH$,
a frame of $1$-forms on $S^3$ can be defined by
\begin{equation}
\label{eqn:IJK}
\alpha_I:=-r\,\rmd r\circ I,\;\; \alpha_J:=-r\,\rmd r\circ J,
\;\; \alpha_K:=-r\,\rmd r\circ K.
\end{equation}
Under the identification of $\C^2$ with $\bbH$ via
$(z_1,z_2)\mapsto z_2+z_2\rmj$, the first of these $1$-forms
is the standard contact form on $S^3$,
\[ \alpha_I=x_1\,\rmd y_1-y_1\,\rmd x_1+x_2\,\rmd y_2-y_2\,\rmd x_2,\]
whose Reeb flow
\[ t\longmapsto (\rme^{\rmi t}z_1,\rme^{\rmi t}z_2),\;\;\; t\in\R/2\pi\Z,\]
defines the Hopf fibration $\pi\co S^3\rightarrow S^2=\CP^1$,
$(z_1,z_2)\mapsto[z_1:z_2]$.

We want to describe and classify (up to homotopy of nowhere zero
$1$-forms) the contact structures invariant
under this $S^1$-action.

The contact form $\alpha:=\alpha_I$ is clearly invariant, in fact it
is the connection $1$-form on the Hopf fibration.
The other two forms in \eqref{eqn:IJK}, however, are
the Liouville--Cartan forms of this bundle, and not $S^1$-invariant.

Instead, we define an $S^1$-invariant trivialisation of the
cotangent bundle $T^*S^3$ as follows. Regard the base $S^2$ of 
the Hopf fibration as the unit sphere in $\R^3$ with cartesian
coordinates $x_1,x_2,x_3$. Then the $1$-forms
\[ \alpha_i^0:=x_i\alpha+\rmd x_i,\;\;\; i=1,2,3,\]
on $S^3$, where by abuse of notation we identify the function $x_i\circ\pi$
with~$x_i$, are pointwise linearly independent. More generally, one may identify
$S^2$ with any embedded $2$-sphere $S\subset\R^3$, and then
the $1$-forms
\[ \alpha_i:=\psi_i\alpha+\rmd x_i,\;\;\;i=1,2,3,\]
with $\psi_i\co S\rightarrow\R$ (or the corresponding
$S^1$-invariant function on~$S^3$), define a frame provided
\[ \psi_1\partial_{x_1}+\psi_2\partial_{x_2}+\psi_3\partial_{x_3}
\;\;\;\text{is transverse to~$S$;}\]
this follows from the computation
\[ \alpha_1\wedge\alpha_2\wedge\alpha_3=\alpha\wedge
i_{\sum\psi_j\partial_{x_j}}(\rmd x_1\wedge\rmd x_2\wedge\rmd x_3).\]
For instance, for the proof of Proposition~\ref{prop:lutz}
below, Lutz flattens the $2$-sphere near the north pole $(1,0,0)$
and works with a frame $(\alpha_1,\alpha_2,\alpha_3)$
homotopic to $(\alpha_1^0,\alpha_2^0,\alpha_3^0)$
that near the north pole is of the form
$(\alpha,\rmd x_2,\rmd x_3)$, which simplifies homotopical
calculations considerably.
\subsection{The homotopy classification of invariant $1$-forms}
Given a nowhere zero $1$-form $\sigma$ on $S^3$, not
necessarily $S^1$-invariant, we can write it as
\[ \sigma=\mu_1\alpha_1+\mu_2\alpha_2+\mu_3\alpha_3.\]
By rescaling $\sigma$, we may assume that
\[ \mu_1^2+\mu_2^2+\mu_3^2=1,\]
so that $\sigma$ is described by the map
\[ \mu:=(\mu_1,\mu_2,\mu_3)\co S^3\longrightarrow S^2. \]
The Hopf invariant $H(\sigma):=H(\mu)\in\Z$ determines $\sigma$ up to homotopy
through nowhere zero $1$-forms.

Now suppose $\sigma$ is $S^1$-invariant; equivalently, the
$\mu_i$ are $S^1$-invariant. Then $\mu$ factors through the
Hopf fibration~$\pi$, that is, $\mu=\omu\circ\pi$ for some
$\omu\co S^2\rightarrow S^2$. It follows that
\begin{equation}
\label{eqn:Hmu}
H(\mu)=(\deg\omu)^2H(\pi)=(\deg\omu)^2;
\end{equation}
this formula is a straightforward consequence of the
differential forms definition of the Hopf invariant~\cite{botu82}.
In particular, not all homotopy classes of nowhere zero $1$-forms
are realised by $S^1$-invariant $1$-forms.
However, the restriction on $S^1$-invariant forms coming from
\eqref{eqn:Hmu} is the only one, and actually one can find an
$S^1$-invariant \emph{contact} form in each allowable homotopy class.

\begin{prop}[Lutz]
\label{prop:lutz}
If $\sigma$ is an $S^1$-invariant and nowhere zero $1$-form on $S^3$,
its Hopf invariant $H(\sigma)$ is a square. Conversely,
for every $n\in\N_0=\{0,1,2,\ldots\}$ there is an $S^1$-invariant contact
form $\sigma_n$ on $S^3$ with $H(\sigma_n)=n^2$.\qed
\end{prop}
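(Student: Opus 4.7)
The first statement is immediate from formula~\eqref{eqn:Hmu}. Indeed, writing $\sigma=\mu_1\alpha_1+\mu_2\alpha_2+\mu_3\alpha_3$ in any of the $S^1$-invariant framings of $T^*S^3$ constructed in the preceding subsection, the coefficients $\mu_i$ are $S^1$-invariant functions on~$S^3$, so after rescaling to $|\mu|=1$ the map $\mu\co S^3\to S^2$ descends through the Hopf projection to a map $\bar\mu\co S^2\to S^2$. Formula~\eqref{eqn:Hmu} then gives $H(\sigma)=(\deg\bar\mu)^2$, a square.

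For the converse, I would construct $\sigma_n$ as a $T^2$-invariant Lutz form, for the diagonal $T^2$-action $(\rme^{\rmi\theta_1},\rme^{\rmi\theta_2})\cdot(z_1,z_2)=(\rme^{\rmi\theta_1}z_1,\rme^{\rmi\theta_2}z_2)$ on $\bbH=\C^2\supset S^3$. In coordinates $z_1=\cos(\beta/2)\rme^{\rmi(\theta+\varphi)}$, $z_2=\sin(\beta/2)\rme^{\rmi(\theta-\varphi)}$, the Hopf circle is generated by $\partial_{\theta}$, the complement $S^3\setminus(H_N\cup H_S)$ of the two Hopf circles over the poles of the base~$S^2$ identifies with $(0,\pi)\times T^2_{\theta,\varphi}$, and any $T^2$-invariant $1$-form is a Lutz form
\[
\sigma=h_1(\beta)\,\rmd\theta+h_2(\beta)\,\rmd\varphi,
\]
contact precisely when $h_1 h_2'-h_2 h_1'\neq 0$ on~$(0,\pi)$. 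Smooth extension of $\sigma$ across the degenerate tori $H_N,H_S$ requires $h_1(0)=h_2(0)>0$ with $h_1-h_2=O(\beta^2)$ and $h_1'(0)=h_2'(0)=0$, and analogous second-order vanishing conditions at $\beta=\pi$ together with $h_1(\pi)=-h_2(\pi)$; the standard $\alpha=\rmd\theta+\cos\beta\,\rmd\varphi$ satisfies these and corresponds to the curve $(h_1,h_2)=(1,\cos\beta)$.

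Expanding $\sigma$ in the standard frame $\alpha_i^0=x_i\alpha+\rmd x_i$ via the Hopf projection formulas $(x_1,x_2,x_3)=(\sin\beta\cos(2\varphi),-\sin\beta\sin(2\varphi),\cos\beta)$ yields explicit formulas for the $\mu_i$ in which the angular variable enters only as $2\varphi$, confirming the descent $\mu=\bar\mu\circ\pi$; in particular $\mu_3=h_1\cos\beta$. The degree of $\bar\mu$ can then be read off from the winding of the planar curve $\beta\mapsto(h_1(\beta),h_2(\beta))$ about the origin. For instance, a curve that vanishes on its first coordinate exactly at $\beta=\pi/2$, and does so with the sign change compatible with that of $\cos\beta$, produces $\mu_3\geq 0$ throughout; the associated $\bar\mu$ then misses the south pole of the target, hence has degree zero, giving $H(\sigma_0)=0$. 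Adding $k$ further clockwise or counter-clockwise windings of $(h_1,h_2)$ raises $|\deg\bar\mu|$, and one realises each $n\in\N_0$ in this way.

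The main technical obstacle is this explicit identification of $\deg\bar\mu$ in terms of the winding of the Lutz curve, which requires a signed count of preimages at a regular value of~$\bar\mu$ and a careful orientation analysis at each zero of~$h_1$. Following Lutz, this is streamlined by flattening the base sphere near the north pole and working with the adapted frame $(\alpha,\rmd x_2,\rmd x_3)$ on that flat region, in which the induced map $\bar\mu$ becomes homotopically transparent and the winding-number calculation reduces to a direct count consistent with formula~\eqref{eqn:Hmu}.
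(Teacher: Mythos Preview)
Your treatment of the first statement is correct and matches the paper's discussion: formula~\eqref{eqn:Hmu} is exactly what is needed, and the paper states it for precisely this purpose.

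For the converse, note first that the paper does not supply its own proof --- the proposition carries a \qed\ and is attributed to Lutz~\cite{lutz77}. What the paper \emph{does} provide, in the remarks following the proposition, is Giroux's observation: performing $k$ Lutz twists on $\xist$ along $k$ Hopf fibres yields an $S^1$-invariant contact structure $\xi_k$ with $H'(\xi_k)=k(k-2)$, hence $H(\xi_k)=(k-1)^2$. Letting $k$ range over $\N_0$ realises every square. This is the cleanest route, and it is essentially equivalent to yours: a Lutz twist along a polar Hopf fibre is precisely the operation of adding a full winding to your curve $\beta\mapsto(h_1(\beta),h_2(\beta))$ near an endpoint.

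Your $T^2$-invariant Lutz-form approach is therefore on the right track, and the formula $\mu_3=h_1\cos\beta$ (before normalisation) is correct. But the argument is left incomplete at exactly the point you flag as the ``main technical obstacle'': you assert that adding windings to $(h_1,h_2)$ changes $\deg\bar\mu$ by~$\pm 1$, yet you do not carry out the signed preimage count or the orientation analysis that would establish this. Deferring to Lutz's flattening trick without executing it leaves a genuine gap. By contrast, the Lutz-twist formulation bypasses this computation entirely, since the effect of a single Lutz twist on the Hopf invariant is known independently (see \cite[Section~4.3]{geig08}), and one simply adds these contributions.
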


\begin{rem}
This has been proved by Lutz~\cite[Section~3]{lutz77}.
The statements in Sections 4.3 and 4.4 of that paper seem to suggest that
\emph{every} homotopy class of tangent $2$-plane fields can be realised by an
$S^1$-invariant $1$-form (or even a contact form), but this is clearly
in error. Lutz also discusses the classification up to
equivariant diffeomorphism.
\end{rem}

\begin{ex}
\label{ex:H-frame}
With the frame $\alpha_i=x_i\alpha+\rmd x_i$ and $S=S^2$, we have
\[ \sum_{i}x_i\alpha_i=\sum_ix_i^2\alpha+\sum_ix_i\,\rmd x_i=
\alpha\;\;\;\text{on $TS^3$},\]
which means that $\omu_{\alpha}=\mathrm{id}_S^2$. It follows that
$H(\alpha)=1$.
\end{ex}
\subsection{Remarks on the Hopf invariant}
The Hopf invariant of a map $S^3\rightarrow S^2$ is a well-defined integer.
The Hopf invariant $H(\sigma)$ of a nowhere vanishing $1$-form
$\sigma$ on $S^3$, however, depends on the choice of trivialisation
$(\alpha_1,\alpha_2,\alpha_3)$ of $T^*S^3$ with respect to which we
identify $\sigma$ with a map $\mu\co S^3\rightarrow S^2$.
We briefly discuss how $H(\sigma)$ transforms under a change of
trivialisation, and how our convention for the Hopf invariant relates to the
equally common choice of trivialisation given by
$(\alpha_I,\alpha_J,\alpha_K)$.

Write $\sigma=\mu_1\alpha_1+\mu_2\alpha_2+\mu_3\alpha_3$.
We take the orientation and bundle metric on $T^*S^3$ that make $(\alpha_1,
\alpha_2,\alpha_3)$ a positive orthonormal frame. With
respect to a second positive orthonormal frame, $\sigma$ is described
by a map $\mu'\co S^3\rightarrow S^2$ of the form
\[ \mu'(p)=A(p)
\begin{pmatrix}
\mu_1(p)\\
\mu_2(p)\\
\mu_3(p)
\end{pmatrix}\]
for some $A\co S^3\rightarrow\mathrm{SO}(3)$ describing the change of frame.

Decompose $S^3$ into two closed hemispheres as $S^3=D^3_+\cup_{S^2}D^3_-$.
Up to homotopy, we may assume that
\[ \mu=\begin{pmatrix}1\\0\\0\end{pmatrix}\;\;\text{on $D^3_-$}\;\;\;
\text{and}\;\;\;A=\mathrm{id}\;\;\text{on $D^3_+$}.\]
Then
\[ A\mu=\begin{cases}
\mu                          & \text{on $D^3_+$},\\
(a_{11},a_{21},a_{31})^{\ttt} & \text{on $D^3_-$},
\end{cases}\]
where $(a_{11},a_{21},a_{31})^{\ttt}$ denotes the first column vector of~$A$.
Hence
\[ [\mu']=[\mu]+[(a_{11},a_{21},a_{31})^{\ttt}]\;\;\;\text{in}
\;\; \pi_3(S^2)\cong\Z.\]
We conclude that
\[ H(\mu')=H(\mu)+c_A,\]
with an integer  $c_A$ depending on $A$ only.

With respect to the frame $\alpha_I,\alpha_J,\alpha_K$, the contact
form corresponds to the constant map, with Hopf invariant~$0$.
By comparing this with
Example~\ref{ex:H-frame}, we arrive at the following statement.

\begin{lem}
\label{lem:HH'}
The Hopf invariant $H$ of nowhere vanishing $1$-forms computed with respect
to the frame $(\alpha_1,\alpha_2,\alpha_3)$ and the invariant $H'$
computed with respect to $(\alpha_I,\alpha_J,\alpha_K)$
are related by $H=H'+1$.\qed
\end{lem}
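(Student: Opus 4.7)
The plan is to apply the change-of-frame identity $H(\mu')=H(\mu)+c_A$ derived in the paragraphs immediately preceding the lemma. Recall that $c_A$ depends only on the homotopy class of the frame-change map $A\co S^3\to\mathrm{SO}(3)$, not on the particular nowhere-vanishing $1$-form~$\sigma$. With the first frame taken to be $(\alpha_1,\alpha_2,\alpha_3)$ and the second to be $(\alpha_I,\alpha_J,\alpha_K)$, this formula becomes $H'(\sigma)=H(\sigma)+c_A$ for every such~$\sigma$. To prove the lemma, it therefore suffices to evaluate both $H$ and $H'$ on a single judiciously chosen~$\sigma$ in order to determine~$c_A$.

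The natural test form is $\sigma=\alpha$. In the frame $(\alpha_I,\alpha_J,\alpha_K)$ we simply have $\alpha=\alpha_I$, so the associated map $S^3\to S^2$ is the constant map $p\mapsto(1,0,0)$, yielding $H'(\alpha)=0$. On the other hand, Example~\ref{ex:H-frame} computes that, with respect to the frame $(\alpha_1,\alpha_2,\alpha_3)$, the corresponding map factors through the Hopf fibration as $\omu_{\alpha}=\mathrm{id}_{S^2}$; formula~\eqref{eqn:Hmu} then gives
\[
H(\alpha)=(\deg\mathrm{id}_{S^2})^2\cdot H(\pi)=1.
\]
Consequently $c_A=H'(\alpha)-H(\alpha)=-1$, and substituting back we get $H'(\sigma)=H(\sigma)-1$ for every nowhere-vanishing~$\sigma$, which is exactly the claim $H=H'+1$.

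Two minor points have to be verified in passing, neither of which is a serious obstacle. First, both $H$ and $H'$ should be independent of the bundle metric used to normalise~$\sigma$; this is immediate because any two bundle metrics on $T^*S^3$ are connected by a path, giving a homotopy of the associated normalised maps and hence preserving the Hopf invariant. Second, the derivation of $H(\mu')=H(\mu)+c_A$ required homotoping $A$ to be the identity on one closed hemisphere of $S^3$ and the normalised $\mu$ to be constant on the other; this is permissible for our specific $A$ since $\mathrm{SO}(3)$ is path-connected and the maps involved are defined only up to homotopy. With these observations in place, the lemma reduces to the elementary computations above, and the entire argument is essentially a book-keeping consequence of Example~\ref{ex:H-frame} together with the frame-change discussion.
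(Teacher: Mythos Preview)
Your proof is correct and follows the paper's own argument essentially verbatim: the lemma is marked \qed\ in the paper precisely because it is an immediate consequence of the change-of-frame formula $H(\mu')=H(\mu)+c_A$ together with the computations $H'(\alpha)=0$ (constant map) and $H(\alpha)=1$ (Example~\ref{ex:H-frame}). Your two closing remarks are fine but not needed, since the paper has already set up the change-of-frame discussion in exactly the generality required.
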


An invariant definition of the Hopf invariant for tangent $2$-plane
fields on~$S^3$ (i.e.\ the nowhere
vanishing $1$-form defining such a plane field),
independent of a choice of a trivialisation of $T^*S^3$,
is the $d_3$-invariant of Gompf~\cite{gomp98}. This takes
values in $\Z+\frac{1}{2}$, and for the standard contact structure
$\xist$ it takes the value $d_3(\xist)=-\frac{1}{2}$.
Gompf's invariant is related to $H'$ via
\begin{equation}
\label{eqn:d3H'}
d_3=-H'-\frac{1}{2};
\end{equation}
see~\cite[p.~114]{elfr09}. This formula can also be derived
from the considerations in \cite[Section~4.3]{geig08}.

The next lemma addresses the behaviour of the Hopf invariant
$H'$ under contact connected sums. Given two connected contact manifolds
(of the same but arbitrary odd dimension), their connected sum carries
a well-defined contact structure. We shall return to this issue in 
Section~\ref{section:cont-connect}.

\begin{lem}
\label{lem:connectH'}
Under the contact connected sum of contact structures
on $S^3$, the Hopf invariant $H'$ behaves additively, that is,
for contact structures $\xi,\xi'$ on $S^3$ we have
\[ H'(\xi\#\xi')=H'(\xi)+H'(\xi').\]
\end{lem}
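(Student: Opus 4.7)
The plan is to exploit a special feature of the framing $(\alpha_I,\alpha_J,\alpha_K)$: with respect to it, the standard contact form $\alpha_I$ corresponds to the constant map $\mu\equiv(1,0,0)\in S^2$, so $H'(\xi_{\mathrm{st}})=0$. This means that on a Darboux ball the map $\mu'$ associated to any $\xi$ can be homotoped (within the homotopy class of the underlying 1-form) to be constant, which is exactly what is needed for a wedge-sum type argument for Hopf invariants.

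First, I would set up the connected sum concretely. Given $\xi_i=\ker\beta_i$ on $S^3$ for $i=1,2$, choose embedded balls $B_i\subset S^3$ and, by Darboux's theorem (together with the $h$-principle for coorientable 2-plane distributions), assume that $\beta_i$ agrees with $\alpha_I$ on a neighbourhood of $\overline{B_i}$. The contact connected sum $\xi_1\#\xi_2$ is then performed by removing $\Int(B_i)$ and gluing the resulting manifolds $S^3\setminus\Int(B_i)$ along their boundary spheres via an orientation-reversing contactomorphism that is standard near the spheres; this yields a contact structure on $S^3\#S^3\cong S^3$. The left-invariant framing $(\alpha_I,\alpha_J,\alpha_K)$ on each summand is defined from the group structure of $S^3=SU(2)$ and, because on each $B_i$ the contact structure and the framing are standard, the framings on the two pieces descend to a global left-invariant framing on the glued $S^3$.

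Second, I would pass to unit-length representatives. Since $H'$ depends only on the homotopy class of the nowhere-vanishing 1-form, I normalise so that $\sigma_i=\mu_{i,I}\alpha_I+\mu_{i,J}\alpha_J+\mu_{i,K}\alpha_K$ has pointwise unit length, giving maps $\mu_i\co S^3\rightarrow S^2$ with $\mu_i|_{B_i}\equiv p_0:=(1,0,0)$. The glued 1-form $\sigma$ representing $\xi_1\#\xi_2$ corresponds to a map $\mu\co S^3\rightarrow S^2$ with $\mu=\mu_i$ on $S^3\setminus\Int(B_i)$, in particular $\mu\equiv p_0$ in a neighbourhood of the gluing sphere~$S$.

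Third, I would compute $H(\mu)$ via the linking-number description of the Hopf invariant. Pick two distinct regular values $p,q\in S^2\setminus\{p_0\}$. Then $\mu^{-1}(p)=L_1(p)\sqcup L_2(p)$ with $L_i(p)\subset S^3\setminus\overline{B_i}$, and similarly for~$q$; moreover these preimages are disjoint from a neighbourhood of $S$ (where $\mu\equiv p_0$). Hence the links $L_1(\cdot)$ and $L_2(\cdot)$ lie in the two distinct components of the complement of $S$, so the cross linking numbers $\mathrm{lk}(L_1(p),L_2(q))$ and $\mathrm{lk}(L_2(p),L_1(q))$ vanish, and
\[
H(\mu)=\mathrm{lk}\bigl(\mu^{-1}(p),\mu^{-1}(q)\bigr)=\mathrm{lk}\bigl(L_1(p),L_1(q)\bigr)+\mathrm{lk}\bigl(L_2(p),L_2(q)\bigr)=H(\mu_1)+H(\mu_2),
\]
which is exactly the desired identity $H'(\xi_1\#\xi_2)=H'(\xi_1)+H'(\xi_2)$.

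The main obstacle, and the step needing the most care, is the compatibility between the contact connected sum and the left-invariant framing: one must verify that after gluing, the framings $(\alpha_I,\alpha_J,\alpha_K)$ on the two summands assemble into the corresponding left-invariant framing on $S^3\#S^3\cong S^3$, so that reading off $\mu$ from $\sigma$ on the glued manifold is consistent with reading off the $\mu_i$ from the $\sigma_i$ on each summand. This is exactly why one needs the Darboux model for both $\xi_i$ and the framing to agree on~$B_i$; once this is arranged, the rest of the argument is the standard additivity of the Hopf invariant under wedge-like constructions.
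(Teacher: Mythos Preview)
Your approach is quite different from the paper's. The paper simply invokes the connected-sum formula for Gompf's $d_3$-invariant, $d_3(\xi\#\xi')=d_3(\xi)+d_3(\xi')+\tfrac12$, established in \cite[Lemma~4.2]{dgs04}, and combines it with the relation $d_3=-H'-\tfrac12$ from~\eqref{eqn:d3H'}; the additivity of $H'$ then drops out in one line.

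Your direct linking-number argument is more elementary in spirit, but the step you yourself flag as the main obstacle is a genuine gap as written. The assertion that the left-invariant framings on the two summands assemble into the left-invariant framing on the glued $S^3$ is not justified: the gluing diffeomorphism and the identification $S^3\#S^3\cong S^3$ have no a priori reason to intertwine the quaternionic framings, and making both $\beta_i$ and the framing ``standard'' on $B_i$ does not force this. What your argument actually produces is a pieced-together framing $F$ on the connected sum (this exists since $\pi_2(\mathrm{SO}(3))=0$), and with respect to $F$ the Hopf invariant of the glued map is indeed $H'(\xi_1)+H'(\xi_2)$. But $F$ may differ from the left-invariant framing by a class in $\pi_3(\mathrm{SO}(3))\cong\Z$, shifting the Hopf invariant by a constant $c_A$ exactly as in the discussion preceding Lemma~\ref{lem:HH'}. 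So you have only shown $H'(\xi_1\#\xi_2)=H'(\xi_1)+H'(\xi_2)+c_A$ with $c_A$ independent of the~$\xi_i$. The gap can be closed cheaply by calibration: taking $\xi_1=\xi_2=\xist$, one has $\xist\#\xist=\xist$ and $H'(\xist)=0$, whence $c_A=0$. Without this step (or a direct verification that the framings match under the gluing), the argument is incomplete.
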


\begin{proof}
For the $d_3$-invariant, the connected sum formula
\[ d_3(\xi\#\xi')=d_3(\xi)+d_3(\xi')+\frac{1}{2}\]
has been established in \cite[Lemma~4.2]{dgs04}. The formula in
the lemma then follows from~\eqref{eqn:d3H'}.
\end{proof}

Finally, we can make a sanity check concerning the relation between
$H$ and $H'$ by using the following observation of Giroux,
cf.~\cite[p.~115]{elfr09}: the Lutz twist on $\xist$ along $k$ fibres
of the Hopf fibration produces an $S^1$-invariant contact structure
$\xi_k$ with $H'(\xi_k)=k(k-2)$. This is consistent
with Proposition~\ref{prop:lutz} and Lemma~\ref{lem:HH'}, since
\[ H(\xi_k)=H'(\xi_k)+1=(k-1)^2.\]
\subsection{Bott integrals}
The existence of a Bott integral for the $S^1$-invariant contact forms
$\sigma_n$ in Proposition~\ref{prop:lutz} follows from
Theorem~\ref{thm:seifert}. Alternatively, one can use the
explicit description of these $S^1$-invariant forms in \cite{lutz77}
or Giroux's observation we just mentioned
to construct a Morse function on the base that lifts to a function
invariant under the Reeb flow of~$\sigma_n$.

The $\sigma_n$ are obtained from the connection $1$-form $\alpha$ by
a modification over a collection of circles in the base $S^2$.
Outside this modification, the Reeb flow defines the Hopf fibration,
and any Morse function on the base will lift to a Morse--Bott function.
Over an annulus around each circle in the collection,
the connection $1$-form is replaced by a Lutz form $h_1(r)\,\alpha
+h_2(r)\,\rmd\varphi$, where $\varphi$ is the angular coordinate
on the annulus in circle direction, and $r$ is a transverse
coordinate. Thus, any Morse function on the base that is a function
of $r$ only in each of these annular neighbourhoods will lift
to the desired Bott integral.
\section{Integrable Reeb flows on contact connected sums}
\label{section:cont-connect}
In this section we adapt the contact connected sum construction
to the case of Bott-integrable contact forms. This will allow
us to prove Theorem~\ref{thm:S3T3} for the $3$-sphere.
\subsection{The model handle}
We follow Weinstein's description~\cite{wein91} of
contact surgery for the model of a contact connected
sum (or a symplectic $1$-handle); see \cite[Chapter~6]{geig08}
for further context. Thus, on $\R^4$ with
cartesian coordinates $x,y,z,t$ and standard symplectic form
\[ \omega = \rmd x\wedge\rmd y+\rmd z\wedge\rmd t,\]
we consider the Liouville vector field
\[ Y=\frac{1}{2}x\partial_x+\frac{1}{2}y\partial_y
+2z\partial_z-t\partial_t.\]
The $1$-form $i_Y\omega$ induces a contact form on any
hypersurface transverse to~$Y$.

The hypersurface we wish to consider is constructed
as follows. We consider a smooth function $h=h(\rho,t)$
on $\R^+\times\R$ with a regular level set $\{h=0\}$ 
as shown in Figure~\ref{figure:model-handle}.
We require that the vector field $\rho\partial_{\rho}
-t\partial_t$ be positively transverse to this level set,
i.e.\
\[ \rho h_{\rho}-th_t>0\;\;\;\text{along $\{h=0\}$}.\]
Set $H(x,y,z,t):=h(\rho,t)$ with $\rho:=x^2+y^2+z^2$.
Then the $3$-dimensional hypersurface $M:=\{H=0\}$ in $\R^4$
can be visualised (up to one missing dimension)
as being obtained by rotating the curve $\{h=0\}$
about the $t$-axis. The Liouville vector field
$Y$ is clearly transverse to~$M$. One may in fact
compute explicitly that $\rmd H(Y)>0$.

\begin{figure}[h]
\labellist
\small\hair 2pt
\pinlabel $\rho$ [t] at 211 215
\pinlabel $t$ [r] at 0 425
\pinlabel $\{t=1\}$ [b] at 60 397
\pinlabel $\{t=-1\}$ [t] at 60 37
\pinlabel $\rho_0$ [tr] at 51 216
\pinlabel $h>0$ [l] at 113 280
\pinlabel $h<0$ [l] at 22 346
\pinlabel $\{h=0\}$ [bl] at 90 97
\endlabellist
\centering
\includegraphics[scale=0.45]{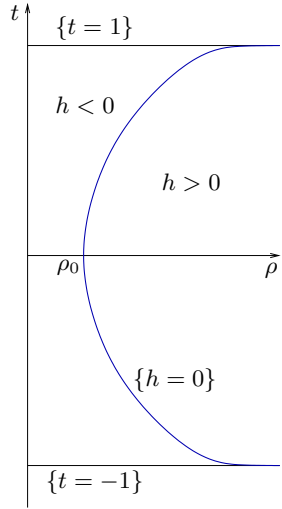}
  \caption{The model for the connected sum}
  \label{figure:model-handle}
\end{figure}

The Hamiltonian vector field $X_H$, defined by $-\rmd H=\omega
(X_H,\,.\,)$, takes the form
\[ X_H=-2yh_{\rho}\partial_x+2xh_{\rho}\partial_y
-h_t\partial_z+2zh_{\rho}\partial_t.\]
The function $F\co (x,y,z,t)\mapsto x^2+y^2$ on
$\R^4$ then satisfies $\rmd F(X_H)=0$. The restriction
$f$ of $F$ to $M$ likewise is an integral of~$X_H$.

As is well known, cf.\ \cite[Lemma~1.4.10]{geig08},
the Reeb vector field $R$ of the contact form
$\alpha:=(i_Y\omega)|_{TM}$ is a rescaling
of $X_H$; in fact,
\[ R=\frac{X_H}{\rmd H(Y)}.\]
It follows that the function $f$ is an integral of~$R$.

Write $(\rho_0,0)$ for the point where the curve
$\{h=0\}$ intersects the $\rho$-axis. We may choose
$h$ such that $h_{tt}(\rho_0,0)<0$, ensuring
convexity of the curve at that point. Under this
assumption, we have found the desired Bott integral.

\begin{lem}
If $h_{tt}(\rho_0,0)<0$, the
function $f\co M\rightarrow\R$ is Morse--Bott.
\end{lem}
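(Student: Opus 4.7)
The plan is to identify the critical set of $f = F|_M$ (where $F = x^2 + y^2$) and then verify the Morse--Bott condition on each component. A point of $M$ is critical for $f$ precisely when $\rmd F$ is proportional to $\rmd H$ there. Writing $\rmd F = 2x\,\rmd x + 2y\,\rmd y$ against $\rmd H = 2x h_\rho\,\rmd x + 2y h_\rho\,\rmd y + 2z h_\rho\,\rmd z + h_t\,\rmd t$, the proportionality $\rmd F = \lambda\rmd H$ splits into two cases. For $\lambda = 0$ it forces $x = y = 0$, yielding the minimum locus $\{x = y = 0\} \cap M$, which is a disjoint union of circles (one for each sign of~$z$) covering the curve $\{h = 0\}$. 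For $\lambda \neq 0$, the $\rmd t$-component forces $h_t = 0$, while $\lambda h_\rho = 1$ together with $z h_\rho = 0$ forces $z = 0$. Under the (implicit but visually evident) assumption that $(\rho_0, 0)$ is the only point on $\{h = 0\}$ with $h_t = 0$, this case produces exactly the \emph{waist circle} $C := \{x^2 + y^2 = \rho_0,\; z = 0,\; t = 0\}$.

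Near any point of the minimum locus, one introduces local coordinates on $M$ by solving $h = 0$ either for $t$ or, at the subtle points $(0, 0, \pm\sqrt{\rho_0}, 0)$, for $\rho$; the latter is possible because the positive-transversality condition $\rho h_\rho - t h_t > 0$ along $\{h = 0\}$ forces $h_\rho(\rho_0, 0) > 0$. In either chart the Hessian of $f = x^2 + y^2$ at the critical point is $\mathrm{diag}(2, 2, 0)$, positive semi-definite with kernel tangent to the critical circle; this is Morse--Bott of minimum type. For the waist circle $C$, the hypothesis $h_{tt}(\rho_0, 0) < 0$ enters decisively: since $h_t(\rho_0, 0) = 0$ while $h_\rho(\rho_0, 0) > 0$, the implicit function theorem in the $\rho$-direction gives
\[
\rho(t) = \rho_0 - \frac{h_{tt}(\rho_0, 0)}{2 h_\rho(\rho_0, 0)}\, t^2 + O(t^4)
\]
on $\{h = 0\}$ near $(\rho_0, 0)$. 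Using an angle $\theta$ along $C$ together with $(z, t)$ as transverse coordinates on $M$, the relation $\rho = x^2 + y^2 + z^2$ yields
\[
f = \rho(t) - z^2 = \rho_0 - z^2 - \frac{h_{tt}(\rho_0, 0)}{2 h_\rho(\rho_0, 0)}\, t^2 + O(t^4).
\]
The transverse Hessian along $C$ is therefore $\mathrm{diag}\bigl(-2,\; -h_{tt}(\rho_0, 0)/h_\rho(\rho_0, 0)\bigr)$, which is non-degenerate of signature $(1, 1)$ precisely because $h_{tt}(\rho_0, 0) < 0$; this is Morse--Bott of saddle type.

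The main obstacle is the waist circle $C$, where $h_t = 0$ prevents the direct use of $t$ as a coordinate on $M$, so the implicit function argument used for the minimum locus fails. One must swap the roles of $\rho$ and $t$ and track the quadratic Taylor expansion of $h$ at $(\rho_0, 0)$ carefully; the convexity hypothesis $h_{tt}(\rho_0, 0) < 0$ is exactly the input that keeps the transverse Hessian of $f$ non-degenerate in the $t$-direction. Once this calculation is in place, both critical strata are Morse--Bott submanifolds and the lemma follows.
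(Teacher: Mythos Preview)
Your proof is correct and follows essentially the same route as the paper: identify the critical locus via the Lagrange multiplier condition $\rmd F = \lambda\,\rmd H$, verify the transverse Hessian directly in $(x,y)$ along the minimum locus, and use the implicit function theorem in the $\rho$-direction to obtain $f = \rho(t) - z^2$ with transverse Hessian $\mathrm{diag}\bigl(-2,\,-h_{tt}(\rho_0,0)/h_\rho(\rho_0,0)\bigr)$ along the waist circle. Two cosmetic slips worth noting: in the model handle the minimum locus $\{x=y=0\}\cap M$ consists of two \emph{arcs} (Reeb trajectories passing over the handle), not circles, and absent a symmetry hypothesis on $h$ the remainder in your expansion of $\rho(t)$ is only $O(t^3)$; neither affects the Morse--Bott verification.
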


\begin{proof}
We are going to show that $\Crit(f)$ consists of
two Reeb orbits passing over the handle
through the points $(x,y,z)=(0,0,\pm\sqrt{\rho_0})$
on the belt sphere $M\cap\{t=0\}$, and a periodic
orbit making up the intersection of the belt sphere
with the $xy$-plane.

First we look at the set
\[ \{p\in\R^4\co H(p)=0,\; \rmd_pF=0\}=
\{ H=0, x=y=0\}.\]
This consists of the two curves in the $zt$-plane
described by $h(z^2,t)=0$. Since the restriction of
$X_H$ to the $zt$-plane is tangent to that plane,
these two curves are Reeb orbits, as we should expect.
Along those orbits, $x$ and $y$ constitute transverse
coordinates, so $f=x^2+y^2$ has a non-degenerate
minimum along these components of $\Crit(f)$.

The remaining part of $\Crit(f)$ is made up
of points $p\in M$ where $\rmd_pF$ is non-zero,
but proportional to $\rmd_pH$. Comparing those two
differentials, we see that this condition translates into
\[ x^2+y^2>0,\;\;\;zh_{\rho}=0,\;\;\;\text{and}\;\;\;h_t=0.\]
The third condition is satisfied only at $t=0$;
with $h_{\rho}(\rho_0,0)>0$ the second condition
is then equivalent to $z=0$.
The first condition is then automatic, since $\rho_0>0$.
This describes the
intersection of the belt sphere with the $xy$-plane,
as claimed, and again we see that this is indeed an
orbit of~$H$.

For $|t|$ small we have $h_{\rho}(\rho,t)\neq 0$,
so the implicit function theorem gives us a smooth function
$t\mapsto\rho(t)$ such that the set $\{h=0\}$
is described by $h(\rho(t),t)=0$.
Therefore, for $|t|$ small, we may regard $f$ as a function
of the variables $z$ and $t$:
\[ x^2+y^2=\rho(t)-z^2=:f(z,t).\]
Then
\[ h_{\rho}(\rho(t),t)\rho'(t)+h_t(\rho(t),t)=0,\]
so from $h(\rho_0,0)=0$ and $h_t(\rho_0,0)=0$ we conclude that
$\rho(0)=\rho_0$ and $\rho'(0)=0$. By differentiating
the implicit equation a second time and evaluating
at $(\rho_0,0)$, we get
\[ h_{\rho}(\rho_0,0)\rho''(0)+h_{tt}(\rho_0,0)=0.\]

The function $f(z,t)$ is critical at $z=t=0$, as it should be,
and the components of its Hessian are
\[ f_{zz}(0,0)=-2,\;\;
f_{zt}(0,0)=f_{tz}(0,0)=0,\;\;
\text{and}\;\;
f_{tt}(0,0)=\rho''(0)=-\frac{h_{tt}(\rho_0,0)}{h_{\rho}(\rho_0,0)}>0;\]
where we have used the convexity assumption. This non-degenerate
and indefinite Hessian tells us that the critical orbit
contained in the belt sphere is of hyperbolic type.
\end{proof}
\subsection{The contact connected sum}
\label{subsection:cont-connect}
We now use the model handle to carry out the connected sum
of Bott-integrable contact manifolds. Recall that thanks to the
contact disc theorem \cite[Theorem~2.6.7]{geig08}
there is a well-defined connected sum
operation for connected contact manifolds $(M_{\pm},\xi_{\pm})$,
i.e.\ the operation we are about to describe leads to a contact
structure $\xi_-\#\xi_+$ on $M_-\#M_+$ that is unique up
to diffeomorphism.

\begin{thm}
\label{thm:cont-connect}
Let $(M_{\pm},\alpha_{\pm})$ be two connected contact $3$-manifolds
with Bott-integrable Reeb flows. Then the contact connected sum
\[ (M_-\#M_+,\ker\alpha_-\#\ker\alpha_+)\]
admits a Bott-integrable Reeb flow.
\end{thm}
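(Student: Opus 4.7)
The strategy is to realise the contact connected sum by means of the model handle constructed in the preceding subsection, and to arrange that its Bott integral $f=x^2+y^2$ combines smoothly with Bott integrals on the two summands.

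First, by Proposition~\ref{prop:create-orbit} applied separately to each summand if necessary, I may assume each $(M_\pm,\alpha_\pm)$ carries an elliptic Reeb orbit $\gamma_\pm$. Theorem~\ref{thm:nbhd} then allows me to isotope $\alpha_\pm$ through contact forms sharing a Bott integral so that, on a solid torus neighbourhood of $\gamma_\pm$, it takes the normal form $\rmd\theta_\pm+r^2\,\rmd\varphi_\pm$ with Bott integral $f_\pm=r^2$ locally (up to an additive constant that I absorb into $f_\pm$). A directly analogous normal form applies along each of the two elliptic Reeb orbit arcs $\{x=y=0,\ z=\pm\sqrt{\rho(t)}\}$ of the model handle $M_H$: in cylindrical coordinates on transverse discs, the handle's Bott integral $x^2+y^2$ equals $r^2$ and the contact form can be put in the same standard shape.

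Next, I carry out the Weinstein handle attachment. Excise a small Darboux ball $D^3_\pm$ from $M_\pm$ centred at a point of $\gamma_\pm$ lying inside the normal-form solid torus, and identify the resulting boundary $2$-sphere with one of the boundary spheres of $M_H\cong S^2\times[-1,1]$, taking care that the two points where $\gamma_\pm$ meets $\partial D^3_\pm$ correspond to the two pole points from which the elliptic orbit arcs of $M_H$ emerge. Topologically this yields $M_-\# M_+$, since $M_H$ is a cobordism $S^2\times[-1,1]$ between the two boundary spheres. At the contact level, matching the standard forms at the pole points, together with the Darboux theorem and Gray stability on the complement, produces a contact form realising $\ker\alpha_-\#\ker\alpha_+$; the two elliptic orbit arcs of $M_H$ close up with the corresponding arcs of $\gamma_\pm$ to form closed Reeb orbits threading the handle.

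The hard part is to ensure that the Bott integrals glue. At the pole points the two functions already agree (both equal $r^2$ in the chosen cylindrical coordinates), but away from the poles the restrictions of $f_\pm$ and of $f=x^2+y^2$ to the gluing $2$-sphere need not coincide. My proposal is to remedy this by a local modification: away from the equatorial circle where the Reeb field is tangent to the sphere, a collar of the gluing sphere is foliated by short Reeb orbit segments, so an $R_{\alpha_\pm}$-invariant function there is determined by its values on a transverse slice. I then modify $f_\pm$ in a collar of $\partial D^3_\pm$ through $R_{\alpha_\pm}$-invariant Morse--Bott functions so as to interpolate smoothly between the original $f_\pm$ and the handle's $f$ near the gluing sphere, keeping the modification away from the previously existing critical orbits. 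The resulting function on $M_-\# M_+$ is the desired Bott integral; its critical set is the union of $\Crit(f_\pm)$ and the critical orbits inside the handle, the latter including the new hyperbolic Reeb orbit on the belt sphere.
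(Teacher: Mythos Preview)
Your overall strategy---use Proposition~\ref{prop:create-orbit} and Theorem~\ref{thm:nbhd} to produce an elliptic orbit in normal form, then attach the Weinstein $1$-handle---is the same as the paper's, but your execution of the gluing has a genuine gap. When you invoke ``the Darboux theorem and Gray stability on the complement'' to realise the contact connected sum, you are matching contact \emph{structures}, not contact \emph{forms}. Consequently the Reeb vector fields on the two sides need not agree on the overlap, and your proposal to ``modify $f_\pm$ \ldots\ through $R_{\alpha_\pm}$-invariant Morse--Bott functions so as to interpolate'' is ill-posed: the pullback of the handle's integral $x^2+y^2$ under a mere contactomorphism is invariant under the \emph{handle's} Reeb field, not under $R_{\alpha_\pm}$, so there is no common invariance to preserve during an interpolation. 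Even setting this aside, the claim that such an interpolation can be carried out while remaining Morse--Bott---particularly across the equatorial circle where the Reeb flow is tangent to the gluing sphere---is asserted without justification.

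The paper avoids all of this with a single observation you are missing: on the hyperplanes $\{t=\pm 1\}$ the form $i_Y\omega$ already restricts to $\alpha^0_\pm=\pm\,\rmd z+\tfrac{1}{2}(x\,\rmd y-y\,\rmd x)$, which in polar coordinates is precisely (a rescaling of) the normal form $\rmd\theta+r^2\,\rmd\varphi$ of Theorem~\ref{thm:nbhd}, and its Bott integral $x^2+y^2$ is precisely~$r^2$. Hence the identification supplied by Theorem~\ref{thm:nbhd} matches \emph{both} the contact form and the Bott integral on an entire Darboux ball around $p_\pm$, not only near the poles. Once the model handle is scaled small enough for its footprint to sit inside this ball, the contact form and the function $x^2+y^2$ glue on the nose with $\alpha_\pm$ and $f_\pm$; no interpolation step is needed at all.
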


\begin{proof}
Consider the model handle from the preceding section.
On the affine hyperplanes $\{t=\pm 1\}$, the $1$-form $i_Y\omega$
induces the contact forms
\[ \alpha^0_{\pm}:=(i_Y\omega)|_{T\{t=\pm 1\}}=\pm\rmd z+\frac{1}{2}
(x\,\rmd y-y\rmd x)\]
with Reeb vector field $R^0_{\pm}=\pm\partial_z$. The function
$f(x,y,z)=x^2+y^2$ is a Bott integral for $R^0_{\pm}$.

Now, by Proposition~\ref{prop:create-orbit} we may assume
that $(M_{\pm},\alpha_{\pm})$ contains
an isolated elliptic Reeb orbit~$\gamma_{\pm}$. By Theorem~\ref{thm:nbhd},
these orbits have neighbourhoods where the contact form and the Bott integral
look just like the neighbourhoods of the curve
\[ \theta\longmapsto (0,0,\pm\theta) \in \{t=\pm1\}/z\sim z+2\pi.\]
Therefore, by choosing the model handle small enough, we can ensure
that the surgery in the model is effected inside 
a neighbourhood of $(0,0,0,\pm 1)\in\{t=\pm 1\}$ that can be identified
with a neighbourhood of points $p_{\pm}\in\gamma_{\pm}$ in
$(M_{\pm},\alpha_{\pm})$ --- where by `identify' we mean that we have a
diffeomorphism that respects both the contact form and the Bott integral.
This allows us to carry out the connected sum of $(M_{\pm},\alpha_{\pm})$
in these neighbourhoods of $p_{\pm}$
in such a way that we obtain a Bott-integrable contact manifold.
In $M_-\#M_+$, the orbits $\gamma_{\pm}$ induce a single critical
Reeb orbit, and we have created a new critical Reeb orbit in the belt sphere
of the $1$-handle.
\end{proof}
\subsection{Integrable Reeb flows on the $3$-sphere}
\label{subsection:S3}
We are now ready to prove Theorem~\ref{thm:S3T3} for the
$3$-sphere. In Section~\ref{section:S1-invariant} we found
Bott-integrable $S^1$-invariant contact structures on $S^3$ realising
the values $k(k-2)$, $k\in\N_0$ of the Hopf invariant~$H'$.
In particular, we have the values $H'=-1$ and $H'=3$ for $k=1$ and $k=3$,
respectively. Thanks to Lemma~\ref{lem:connectH'} and
Theorem~\ref{thm:cont-connect}, by taking connected sums we can realise
any integer as $H'$ of a Bott-integrable contact structure.

Now, on $S^3$ there is a unique tight contact structure,
namely $\xist=\ker\alpha$ (with $H'(\xist)=0$),
and a unique overtwisted contact structure for every value of~$H'$;
see~\cite{elia89,elia92}. So it only remains to ensure that
we also have an \emph{overtwisted} integrable contact structure with $H'=0$.
The easiest way to obtain such a structure is to perform
a \emph{full} Lutz twist on $\xist$ along a Hopf fibre; this produces an
overtwisted (and integrable) contact structure homotopic to $\xist$
as a tangent $2$-plane field~\cite[Lemma~4.5.3]{geig08}.
\section{Integrable Reeb flows constructed via open books}
\label{section:open-book}
In this section we describe an open book decomposition
of the trivial circle bundle over a closed, oriented surface.
The contact structure adapted to this open book (in the sense
of Giroux) is then seen to be Bott integrable. This leads
to a construction of Bott-integrable contact structures
on Seifert manifolds that gives control over the homotopy
type of the contact structure.

For other interesting aspects of the connection between Reeb dynamics
and open books, see~\cite{kkvk}.
\subsection{An open book decomposition of $\Sigma_g\times S^1$}
Write $\Sigma_g$ for the closed, oriented surface of genus $g\geq 0$.
We wish to construct an open book decomposition of $\Sigma_g\times S^1$.
To this end, start with a section $\Sigma_g\equiv\Sigma_g\times\{*\}$
of the flow in $S^1$-direction. We will modify this into an honest
surface of section, with one positive and one negative boundary
component. The $S^1$-translates of this surface of section
then define the pages of the open book.
For the background on open books, see \cite[Sections 4.4.2 and 7.3]{geig08}.

The following construction of the desired surface of section
is taken from~\cite{agz23}, to which we refer for pictures.
An alternative description in terms of
a cancelling pair of surgeries can be found in~\cite{etoz06}.
Let $D^2_{\pm}$ be two disjoint discs in $\Sigma_g$.
Remove the two cylinders $C_{\pm}$ over $\Int(D^2_{\pm})$ from
$\Sigma_g\times S^1$. Pick points $p_{\pm}\in\partial D^2_{\pm}$
and a simple path $\gamma$ in
\[ \Sigma_g':=\Sigma_g\setminus\bigl(\Int(D^2_-)\cup\Int(D^2_+)\bigr) \]
joining $p_-$ and~$p_+$. We write $\lambda_{\pm}:=\{p_{\pm}\}\times S^1$
for longitudes on $C_{\pm}$, and $\mu_{\pm}:=\partial D^2_{\pm}$
for meridians. Notice that the meridians $\mu_{\pm}$ carry the orientation
as boundary curves of $D^2_{\pm}$ rather than as boundary
of~$\Sigma_g'$.

Now desingularise the union of $\Sigma_g'$
and the vertical annulus $A:=\gamma\times S^1$ ---
these two surface intersecting each other in~$\gamma$ --- into
a surface $\Sigma_g''$ with the $S^1$-fibre positively transverse to it,
and with boundary the two curves
\[ -\mu_--\lambda_-\;\;\;\text{and}\;\;\;-\mu_++\lambda_+. \]
Write $0_{\pm}$ for the centres of $D^2_{\pm}$. We can now 
find helicoidal annuli $A_{\pm}$, positively transverse to the $S^1$-fibres,
and with oriented boundaries
\[ \bigl(\{0_-\}\times (-S^1)\bigr)\cup (\mu_-+\lambda_-)
\;\;\;\text{and}\;\;\;
\bigl(\{0_+\}\times S^1\bigr)\cup (\mu_+-\lambda_-).\]
Then $\Sigma_g^0:=\Sigma_g''\cup A_-\cup A_+$ is a surface of section with one
negative and one positive $S^1$-fibre as boundary curves.

We now want to convince ourselves that $\Sigma_g^0$, as page of the open book,
gives rise to an open book decomposition of $\Sigma_g\times S^1$
with binding $\{0_{\pm}\}\times(\pm S^1)$,
and monodromy a left- and right-handed Dehn twist, respectively, along
a boundary parallel curve. For additional details (and pictures) the
reader may wish to consult~\cite{etoz06}.

The pages are simply the $S^1$-translates of $\Sigma_g^0$.
To find the monodromy, one needs an $S^1$-invariant vector field
positively transverse to the pages, and pointing in meridional direction
$\pm\mu_{\pm}$ near the binding. Such a vector field can be defined
by taking the vector field $\partial_{\theta}$ in
$S^1$-direction outside the cylinders $C_{\pm}$, and then extending it
into the cylinders as a vector field making a $\pi/2$ turn along radial
lines from $\partial_{\theta}$ to $\pm\partial_{\varphi}$. One
can then read off the claimed monodromy as the return map
of this vector field.
\subsection{Integrable Reeb flows on $\Sigma_g\times S^1$}
We now use the open book description of $\Sigma_g\times S^1$
to find a contact form with Bott-integrable Reeb flow.
The explicit construction gives us control over the Euler
class of the contact structure.
\subsubsection{An exact area form adapted to the monodromy}
\label{subsubsection:area-monodromy}
The page $\Sigma:=\Sigma_g^0$ of the open book is a copy of $\Sigma_g$ with
two open discs removed. We parametrise collars of the two boundary
components $\partial_{\pm}\Sigma$ as $(-1,0]\times\partial_{\pm}\Sigma$,
with coordinates $(r,\varphi)\in (-1,0]\times\R/2\pi\Z$.
We can describe boundary parallel Dehn twist supported inside these collar
neighbourhoods by
\[ \psi_{\pm}\co(r,\varphi)\longmapsto\bigl(r,\varphi\pm\chi(r)\bigr),\]
where $\chi\co (-1,0]\rightarrow [0,2\pi]$ is a smooth monotone
decreasing function with $\chi\equiv 2\pi$ near $r=-1$, and $\chi\equiv 0$
near $r=0$. With these choices, $\psi_+$ describes a right-handed
Dehn twist; $\psi_-$ is left-handed. We write $\psi$ for the
extension of $\psi_{\pm}$ over $\Sigma$ as the identity map
outside the collars.

By Lemma~\ref{lem:exact-area}, we find an exact area form $\rmd\lambda$
on $\Sigma$, where $\lambda$ looks like $\rho(r)\,\rmd\varphi$
with $\rho'>0$ on the collars.

\begin{lem}
\label{lem:exact-mono}
The monodromy $\psi$ is an exact symplectomorphism for $\rmd\lambda$,
that is, $\psi^*\lambda-\lambda=\rmd\tau$ for some smooth function
$\tau\co\Sigma\rightarrow\R^+$.
\end{lem}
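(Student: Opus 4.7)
The plan is to exploit the very explicit form of both $\lambda$ and $\psi$ on the collars to write down $\tau$ directly, and then extend it by a constant on the rest of~$\Sigma$.

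First I would observe that since $\psi$ is the identity outside the two collars, the $1$-form $\psi^*\lambda-\lambda$ vanishes there. On the collar near $\partial_{\pm}\Sigma$, where $\lambda=\rho(r)\,\rmd\varphi$ and $\psi_{\pm}(r,\varphi)=(r,\varphi\pm\chi(r))$, a direct pullback yields
\[
\psi_{\pm}^*\lambda-\lambda=\pm\rho(r)\chi'(r)\,\rmd r,
\]
which is a closed (indeed exact on the collar) $1$-form in $r$ alone, and moreover vanishes near both ends of the collar because $\chi'\equiv 0$ near $r=-1$ and near $r=0$.

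Next I would define a primitive $\tau$ on each collar by integration. On the $\partial_+\Sigma$ collar set
\[
\tau(r):=C_0+\int_{-1}^{r}\rho(s)\chi'(s)\,\rmd s,
\]
and on the $\partial_-\Sigma$ collar set
\[
\tau(r):=C_0-\int_{-1}^{r}\rho(s)\chi'(s)\,\rmd s,
\]
with $C_0$ a positive constant. Because $\chi'\equiv 0$ near $r=-1$, both of these agree with the constant $C_0$ on a neighbourhood of the inner end of each collar, so extending $\tau$ by $C_0$ on the complement of the collars (which is connected) produces a globally smooth function with $\rmd\tau=\psi^*\lambda-\lambda$.

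Finally, I would arrange positivity. Since $\rho\chi'$ is a bounded continuous function of~$r$, the integrals above take values in a bounded interval independent of~$C_0$, so choosing $C_0$ sufficiently large forces $\tau>0$ throughout~$\Sigma$. The only mildly delicate point is the opposite signs of the contributions on the two collars coming from the opposite handedness of the two Dehn twists $\psi_+$ and $\psi_-$: on one collar $\tau$ increases across the twisting region and on the other it decreases, but both deviations from $C_0$ are bounded, so a single large constant $C_0$ handles both. There is no genuine cohomological obstacle here because $\psi^*\lambda-\lambda$ is supported in collars on which it is manifestly exact; the whole exercise is to glue local primitives into a positive global one.
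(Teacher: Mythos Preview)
Your proof is correct and follows essentially the same approach as the paper: compute $\psi_{\pm}^*\lambda-\lambda=\pm\rho(r)\chi'(r)\,\rmd r$ on the collars, integrate from $r=-1$ to obtain a primitive that equals a constant near the inner end of each collar, extend by that constant outside, and choose the constant large enough for positivity. The paper's constant $\tau_0$ is your~$C_0$.
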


\begin{proof}
In the collar neighbourhoods we compute
\[ \psi_{\pm}^*(\rho(r)\,\rmd\varphi)=\rho(r)\,\rmd\varphi\pm\rho(r)\chi'(r)
\,\rmd r =\rho(r)\,\rmd\varphi\pm\rmd\int_{-1}^r \rho(s)\chi'(s)\,\rmd s.\]
We then set $\tau$ equal to a sufficiently large positive constant $\tau_0$
outside the collars such that the extension over the collars defined by
\[ \tau(r,\varphi):=\tau_0\pm\int_{-1}^r \rho(s)\chi'(s)\,\rmd s\]
is positive. Notice that $\tau$ is locally constant
near $\partial_{\pm}\Sigma$.
\end{proof}
\subsubsection{$\Sigma_g\times S^1$ as an open book}
The pair $(\Sigma,\psi)$ consisting of a surface with boundary
and a diffeomorphism of $\Sigma$ equal to the identity near $\partial\Sigma$
gives rise to a mapping torus
\[ \Sigma(\psi):=[0,2\pi]\times\Sigma/(2\pi,x)\sim(0,\psi(x)),\]
with boundary
\[ \partial\Sigma(\psi)=S^1\times\partial\Sigma.\]
The open book determined by $(\Sigma,\psi)$ is then the closed $3$-manifold
\[ M(\psi):=\Sigma(\psi)\cup_{S^1\times\partial\Sigma}
(D^2\times\partial\Sigma).\]
In this way, for $\Sigma=\Sigma_g^0$ and $\psi$ the diffeomorphism
made up of the two boundary parallel Dehn twists, we recover
$\Sigma_g\times S^1$.
\subsubsection{The contact structure adapted to the open book}
There is a construction of contact structures adapted to an open book
due to Thurston and Winkelnkemper~\cite{thwi75} in dimension~$3$,
and generalised to higher dimensions by Giroux~\cite{giro02}.
For the construction of a contact form with Bott-integrable Reeb flow,
even though we are in dimension~$3$, we need to rely on the latter.
Everything required here can be found in Sections 4.4.2 and 7.3
of~\cite{geig08}.

For this construction, we replace the mapping torus $\Sigma(\psi)$
by a diffeomorphic copy better suited to the definition of
a contact form. On $\R\times\Sigma$ we have a free $\Z$-action 
generated by
\[ (t,x)\longmapsto\bigl(t-\tau(x),\psi(x)\bigr).\]
The condition $\tau>0$ guarantees that a slice $\{0\}\times\Sigma$
is sent to a disjoint copy of it. The quotient $(\R\times\Sigma)/\Z$
is then diffeomorphic to $\Sigma(\psi)$. We continue to write
$\Sigma(\psi)$ for this new model.
The function $\tau$ is locally constant near $\partial_{\pm}\Sigma$,
and $\Sigma_g\times S^1=M(\psi)$ is then
obtained by filling in $D^2\times\partial\Sigma$ as before.

Thanks to Lemma~\ref{lem:exact-mono}, the contact form $\rmd t+\lambda$
on $\R\times\Sigma$, with Reeb vector field~$\partial_t$,
is invariant under the $\Z$-action and hence descends
to $\Sigma(\psi)$. Near the boundary, it is a Lutz form,
and the extension over $D^2\times\partial\Sigma$,
as described in \cite[Section~4.4.2]{geig08},
is also a suitable Lutz form $h_1(s)\, \rmd\varphi+h_2(s)\,\rmd\theta$,
where $(s,\theta)$ are polar coordinates on~$D^2$, and $\varphi$
the angular coordinate along $\partial_{\pm}\Sigma$ as before.
The boundary conditions on $h_1(s),h_2(s)$ at $s=1$ are such that
the contact forms on $\Sigma(\psi)$ and $D^2\times\partial\Sigma$
glue smoothly; near $s=0$ the form looks like $2\,\rmd\varphi+s^2\rmd\theta$,
so that it is smooth at $s=0$. In particular, the binding
$\{0\}\times\partial\Sigma$ of the open book consists of Reeb orbits.
We write $\alpha_g$ for the contact form on $\Sigma_g\times S^1$ thus obtained.

The contact structure $\xi_g=\ker\alpha_g$ on $\Sigma_g\times S^1$ is
`supported' by the open book, in the sense that $\rmd\alpha_g$ defines
a positive area form on each page, and $\alpha_g$ evaluates positively
on the binding components, oriented as the boundary of a page.
In dimension~$3$, these conditions define a unique contact structure up to
isotopy.
\subsubsection{The Bott integral}
\label{subsubsection:SigmagS1-bott}
The $\R$-invariant extension of any
Morse function on $\Sigma$ that depends on $r$ only
inside the collar neighbourhoods of $\partial_{\pm}\Sigma$ will
descend to a Bott integral with isolated critical orbits
on $\Sigma(\psi)$, since the $r$-coordinate
is preserved by the Dehn twists. This Morse function may be chosen
to be strictly increasing towards the boundary, and one can then extend it
over $D^2\times\partial\Sigma$ as a function of the radial
coordinate $s$ on the $D^2$-factor, with an isolated non-degenerate
maximum at the centre. This turns the two binding components
into isolated Reeb orbits in the critical set of the Bott integral.
\subsubsection{The Euler class}
\label{subsubsection:euler}
In order to determine which homotopy class of tangent $2$-plane fields is
realised by the contact structure $\xi_g$, we first of all need to
compute its Euler class.
We write $\PD[S^1]\in H^2(\Sigma_g\times S^1;\Z)$ for the
Poincar\'e dual of the fibre class~$[S^1]$.

\begin{lem}
\label{lem:euler-xig}
The Euler class of $\xi_g$ is $e(\xi_g)=-2g\,\PD[S^1]$.
\end{lem}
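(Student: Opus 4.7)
The plan is to compute $e(\xi_g)\in H^2(\Sigma_g\times S^1;\Z)$ by evaluating on Künneth generators of $H_2(\Sigma_g\times S^1;\Z)\cong\Z[\Sigma_g\times\{\ast\}]\oplus(H_1(\Sigma_g)\otimes[S^1])$. The target class $-2g\,\PD[S^1]$ evaluates to $-2g$ on the slice $[\Sigma_g\times\{\ast\}]$ and to $0$ on every torus $[\gamma\times S^1]$, so it suffices to establish these two computations.

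First I would handle the tori. Choose $\gamma\subset\Sigma_g$ to miss the puncture points $0_\pm$, so that $\gamma\times S^1$ lies in the mapping torus piece $\Sigma(\psi)\subset M$. On $\Sigma(\psi)$ the bundle $\xi_g$ is canonically isomorphic, via Reeb projection along $\partial_t$, to the pullback of $T\Sigma$; a nowhere-zero vector field on $\gamma\subset\Sigma$ (such fields exist since $\chi(S^1)=0$) lifts to a nowhere-zero $\psi$-equivariant section of $\xi_g$ on the cylinder $\R\times\gamma\subset\R\times\Sigma$, and descends to $\gamma\times S^1$ (possibly after adjusting by a $\psi$-invariant factor to absorb the monodromy, which is the identity outside collars). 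Hence $\xi_g|_{\gamma\times S^1}$ is trivial and its Euler number vanishes.

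For the slice, I would carry out a Poincaré--Hopf count of a section of $\xi_g$ on all of $M$. Choose a $\psi$-invariant vector field $W$ on $\Sigma=\Sigma_g^0$; since $\psi$ is a boundary-parallel Dehn twist, $\psi$-invariance forces $W$ to be of the form $b(r)\partial_\varphi$ in the boundary collars, i.e.\ tangent to $\partial\Sigma$. For such a tangent-to-boundary vector field, Poincaré--Hopf gives total interior index $\chi(\Sigma)=-2g$. The lift $\widetilde W=-\lambda(W)\partial_t+W$ is a section of $\xi_g$ on $\Sigma(\psi)$ whose zero set consists of the Reeb orbits through the zeros of $W$; in $M$, each such orbit is homologous to the fibre class $[S^1]$, so the Poincaré dual of the zero 1-cycle on $\Sigma(\psi)$ is $-2g\,\PD[S^1]$. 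Next extend $\widetilde W$ across the two binding solid tori $D^2_\pm\times S^1$ by projecting its constant boundary values onto the Lutz planes $\ker(h_1(s)\,\rmd\varphi+h_2(s)\,\rmd\theta)$; near $s=0$, where $h_2(s)\sim s^2$, this extension vanishes precisely along each binding circle, producing two additional 1-dimensional zeros.

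The main obstacle is the orientation bookkeeping at the two bindings. The critical point is that $\partial_+\Sigma$ and $\partial_-\Sigma$ appear in the construction of $\Sigma_g^0$ as a positive and a negative $S^1$-fibre, i.e.\ $\{0_+\}\times S^1$ and $\{0_-\}\times(-S^1)$; taking this into account in the local Lutz model, the transverse indices at the two binding zeros are equal in the local coordinates but contribute with opposite signs to $H_1(M;\Z)$, so that their combined Poincaré dual is zero. The total signed zero locus is therefore $-2g\cdot[S^1]$, and $e(\xi_g)=-2g\,\PD[S^1]$. An alternative, and arguably cleaner, route to the same computation would be to represent $[\Sigma_g\times\{\ast\}]$ by the slice itself and apply Giroux's formula $\chi(F_+)-\chi(F_-)$ for the characteristic foliation, whose only singularities arise from the two tangency points at the binding together with the zeros of $\rmd\lambda$-type data on the complement; the sign analysis is the principal subtlety either way.
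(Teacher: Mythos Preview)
Your approach and the paper's coincide in strategy: both compute $e(\xi_g)$ as the Poincar\'e dual of the zero locus of a section of~$\xi_g$, obtain the interior contribution $\chi(\Sigma_g^0)=-2g$ from a Poincar\'e--Hopf count on the page, and dispose of the two binding orbits by noting that they carry identical local data but represent $\pm[S^1]$ in homology, hence cancel.

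The paper's section is the vector field $Y$ defined globally by $\alpha(Y)=0$ and $i_Y\,\rmd\alpha=-\rmd f$, so that its zeros are exactly the critical Reeb orbits of the Bott integral, with transversality and transverse index read off directly from the Morse type (elliptic $\leftrightarrow +1$, hyperbolic $\leftrightarrow -1$). On the mapping torus, $\rmd\alpha=\rmd\lambda$ and $f$ is the lift of a Morse function on~$\Sigma$, so this $Y$ is precisely the Reeb lift of the $\rmd\lambda$-Hamiltonian vector field of that Morse function --- a particular instance of your~$\widetilde W$, automatically $\psi$-invariant and of the form $b(r)\,\partial_\varphi$ in the collars. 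The practical gain is that $Y$ is given by one formula valid across the binding solid tori (where $f$ is a function of the radial coordinate~$s$), so no separate extension step or symmetry argument is needed, and transversality is immediate from the Morse--Bott nondegeneracy. Your preliminary vanishing on the tori $\gamma\times S^1$ is correct but becomes redundant once the global Poincar\'e dual has been identified.
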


\begin{proof}
The Euler class $e(\xi_g)$ is Poincar\'e dual to the transverse
self-intersection of $M_g:=\Sigma_g\times S^1$ in the total space
of the $\R^2$-bundle $\xi_g$ over~$M_g$.
Notice that a transverse and isotopic copy $M_g'$ of $M_g$ inherits an
orientation from~$M_g$, and the $0$-section $M_g$ is cooriented by the
orientation of~$\xi_g$ (given by $\rmd\alpha_g|_{\xi_g}$).
This defines an orientation on the $1$-dimensional
submanifold $M_g\cap M_g'\in M_g$, so this submanifold represents
a well-defined element of $H_1(M_g;\Z)$.

Any Bott integral $f$ for $\alpha_g$ as in
Section~\ref{subsubsection:SigmagS1-bott} has isolated critical
Reeb orbits corresponding to the critical points of the chosen Morse
function on $\Sigma=\Sigma_g^0$, plus the two binding orbits
$\partial_{\pm}\Sigma$. Notice that $[\partial_{\pm}\Sigma]=\pm[S^1]$.

A section $Y$ of $\xi_g$ with zeros along the critical
Reeb orbits is defined by~\eqref{eqn:Y}. On a local surface
of section to the Reeb flow near such a critical orbit, $\rmd\alpha_p$
defines a positive area form. By writing $f$ in normal
form near such a critical orbit as $\pm(x^2+y^2)$ or $xy$ in terms
of transverse cartesian coordinates~$x,y$, one sees that $Y$ defines a
section of $\xi_g$ transverse to the zero section, which implies that
$e=\PD[\{Y=0\}]$. Also, one sees as in the $2$-dimensional Poincar\'e--Hopf
theorem that elliptic orbits in $\{Y=0\}$ (corresponding to an index $+1$
singularity on a local surface of section) carry the orientation
defined by the Reeb flow, and hence define the class~$[S^1]$,
whereas hyperbolic ones (index $-1$) represent~$-[S^1]$.

The Morse function on $\Sigma$ used to construct $f$ extends to a
Morse function on $\Sigma_g$ with two additional elliptic points.
So the indices of the critical points on $\Sigma$ add up to
$\chi(\Sigma_g)-2=-2g$. The two binding orbits give
no further contribution, since $[\partial_{\pm}\Sigma]=\pm[S^1]$;
in fact, the pair is homologous to zero as boundary of~$\Sigma$.
\end{proof}
\subsection{Integrable Reeb flows on Seifert manifolds}
\label{subsection:seifert-via-surgery}
Seifert manifolds are the constituents, and hence special cases
of graph manifolds. So the existence of \emph{some} Bott-integrable
Reeb flow on a given Seifert manifold is
a corollary of Theorem~\ref{thm:bott-graph}. If one tries to
prove the analogue of Theorem~\ref{thm:S3T3} for Seifert
manifolds, however, one needs better control over the
homotopy classes of contact structures that admit Bott-integrable
Reeb flows. For this, the construction of such contact structures
starting from the open book for $\Sigma_g\times S^1$ should prove
useful, thanks to our computation of $e(\xi_g)$ in
Lemma~\ref{lem:euler-xig}.

Seifert fibred manifolds with oriented fibres and base orbifold are
obtained from $\Sigma_g\times S^1$ via Dehn surgeries
along a finite number of $S^1$-fibres; see~\cite{gela18}.
This means that we choose disjoint $2$-discs $D^2_i$
in~$\Sigma_g$, remove $\Int(D^2_i)\times S^1$ from
$\Sigma_g\times S^1$, and then reglue solid tori.

The $D^2_i$ may be chosen in a region of $\Sigma_g^0\subset\Sigma_g$
outside the support of the monodromy $\psi$ of the open book.
The exact area form chosen in
Section~\ref{subsubsection:area-monodromy} may be assumed
to look like $r^2\,\rmd\varphi$ on the $D^2_i$ (of some
small radius); simply apply Lemma~\ref{lem:exact-area}
to $\Sigma_g^0$ with the discs removed. Moreover, the
Bott integral chosen in Section~\ref{subsubsection:SigmagS1-bott}
may be assumed to be given by $f=r^2$ on the $D^2_i$.

Then one can apply the sewing lemma (Lemma~\ref{lem:sewing})
to the regluing of the solid tori in order to obtain
a Bott-integrable contact structure on the Seifert manifold.
For further details on contact Dehn surgeries of this kind,
see~\cite[Section~4.1]{geig08}.
Given the specific surgery data for a concrete
Seifert manifold, the interpolation of Lutz forms in the
process of Dehn surgery is sufficiently explicit
to allow the computation of homotopical data
of the resulting contact structure.

\begin{rem}
In `most' cases, this construction will result in an
overtwisted contact structure on the Seifert manifold.
Also, the open book decomposition of $\Sigma_g\times S^1$ does not,
in general, induce a natural open book decomposition of the Seifert manifold
obtained by surgery.

However, tight contact structures admitting Bott-integrable Reeb
flows can be constructed on certain Seifert fibred manifolds
with the help of the open book decompositions found
by \"Ozba\u{g}c{\i}~\cite[Proposition~4]{ozba07}.
He describes explicit \emph{horizontal} open books for Seifert manifolds
whose Seifert invariants satisfy a set of inequalities;
`horizontal' means that the open book comes from a surface of section
for the flow defining the Seifert fibration.

The monodromy of these open books is made up of right-handed Dehn twists
along boundary parallel curves. This guarantees that the contact structure
adapted to the open book is tight (even Stein fillable), and our construction
above allows one to show that these structures admit a Bott-integrable
Reeb flow.
\end{rem}
\section{Integrable Reeb flows on the $3$-torus}
\label{section:T3}
In this section we prove Theorem~\ref{thm:S3T3} for the $3$-torus.
\subsection{Contact structures on~$T^3$}
On $T^3=(\R/\Z)^3$ with circular coordinates $x,y,z$,
the $1$-form
\[ \beta_n:=\cos(2\pi nz)\,\rmd x-\sin(2\pi nz)\,\rmd y\]
is a contact form for any $n\in\N$. As shown by
Kanda~\cite{kand97}, Giroux~\cite{giro99}, and Honda~\cite{hond00},
the contact structures
$\eta_n:=\ker\beta_n$ constitute a complete list
of the tight contact structures on $T^3$ up to
diffeomorphism. As tangent $2$-plane fields,
the $\eta_n$ are all homotopic to $\ker\rmd z$ via the
homotopy
\[ (1-t)\beta_n+t\,\rmd z,\;\;\; t\in[0,1],\]
of non-vanishing $1$-forms. In particular, the $\eta_n$
have trivial Euler class.

For the overtwisted contact structures we may appeal
again to Eliashberg's general classification~\cite{elia89}.
Since the homology of $T^3$ is free of $2$-torsion,
the homotopy class of a tangent $2$-plane field over the
$2$-skeleton is determined by its Euler class. The
Euler class of a coorientable tangent $2$-plane bundle on any
closed, orientable $3$-manifold must be even, since its
mod~$2$ reduction is the second Stiefel--Whitney class,
and $3$-manifolds are spin; in fact, parallelisable,
cf.~\cite[Section~4.2]{geig08}.
Any even class in $H^2(T^3;\Z)=\Z^3$ can be realised
as the Euler class of a tangent $2$-plane field.

Our task, then, is to find a Bott integral for the~$\eta_n$,
$n\in\N$, and Bott-integrable overtwisted contact structures
realising every possible Euler class. The $d_3$-invariant,
which determines the homotopy type over the $3$-skeleton,
can be changed at will, thanks to Theorem~\ref{thm:cont-connect},
by forming connected sums with the Bott-integrable
overtwisted contact structures on~$S^3$.

Since we need only realise all contact structures
up to diffeomorphism, the action of the diffeomorphism
group of $T^3$ on homology, which we shall discuss
in Section~\ref{subsection:action}, reduces the task to
realising the Euler classes $(0,0,2m)\in\Z^3=H^2(T^3;\Z)$,
$m\in\N$. (For the trivial Euler class we can take the
connected sum of $(T^3,\eta_n)$ with any of the overtwisted
contact structures on~$S^3$.)
\subsection{A Bott integral for the tight structures}
Since the $\eta_n$ are invariant under the $S^1$-action
generated by the vector field $\partial_z$, we could appeal
to Theorem~\ref{thm:seifert} for the existence of a Bott-integrable
Reeb flow. More simply, we can directly write down a Bott
integral for the Reeb vector field of~$\beta_n$.

Indeed, the Reeb vector field $R_n:=R_{\beta_n}$ is given by
\[ R_n=\cos(2\pi nz)\,\partial_x-\sin(2\pi nz)\,\partial_y.\]
So any Morse function $f_{S^1}$ on the circle gives rise
to a Morse--Bott function
\[ (x,y,z)\longmapsto f_{S^1}(z)\]
invariant under~$R_n$.
\subsection{The action of $\SL(n,\Z)$ on $\Z^n$}
\label{subsection:action}
As preparation for the discussion of the Euler classes realised
by overtwisted contact structures, we analyse the action
of the diffeomorphism group of $T^3$ on (co-)homology.

Every matrix in the special linear group $\SL(n,\Z)$
defines by left-multiplication an action on $\R^n$ that
descends to an action on $T^n$ by orientation-preserving
diffeomorphisms. The induced action on $H_1(T^n;\Z)$
coincides with the action of $\SL(n,\Z)$ on $\Z^n$.
We call an integral vector $(a_1,\ldots,a_n)\in\Z^n$
\emph{primitive} if the $a_1,\ldots, a_n$ are coprime
(not necessarily pairwise), that is, $\gcd(a_1,\ldots,a_n)=1$.
The following proposition is a special case of the lattice
basis extension theorem~\cite[Section~I.2.3]{cass71}, as was
kindly pointed out to us by Frank Vallentin;
we give a simple direct proof.

\begin{prop}
\label{prop:action}
For $n\geq 2$, the group $\SL(n,\Z)$ acts transitively
on primitive elements of~$\Z^n$.
\end{prop}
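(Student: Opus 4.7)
The plan is to reduce the statement to showing that an arbitrary primitive vector $v=(a_1,\ldots,a_n)\in\Z^n$ lies in the $\SL(n,\Z)$-orbit of $e_1=(1,0,\ldots,0)$. Equivalently, since the action is a group action, it is enough to construct $A\in\SL(n,\Z)$ with $Av=e_1$; equivalently still, $A^{-1}$ should be a matrix in $\SL(n,\Z)$ whose first column is $v$, that is, $v$ should be extendable to a $\Z$-basis of~$\Z^n$.

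The base case $n=2$ is essentially B\'ezout's identity: if $\gcd(a_1,a_2)=1$, pick $p,q\in\Z$ with $pa_1+qa_2=1$ and set
\[
A=\begin{pmatrix} p & q \\ -a_2 & a_1\end{pmatrix}\in\SL(2,\Z),
\]
which sends $(a_1,a_2)^{\ttt}$ to $(1,0)^{\ttt}$.

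For the inductive step, assume the claim for $n-1$. Given a primitive vector $v=(a_1,\ldots,a_n)$, put $d=\gcd(a_2,\ldots,a_n)$. If $d=0$, then $a_2=\cdots=a_n=0$, primitivity forces $a_1=\pm 1$, and one reaches $e_1$ by an obvious sign-adjusting matrix in $\SL(n,\Z)$. Otherwise $(a_2/d,\ldots,a_n/d)$ is primitive in $\Z^{n-1}$, so by the inductive hypothesis there is $B\in\SL(n-1,\Z)$ with $B(a_2,\ldots,a_n)^{\ttt}=(d,0,\ldots,0)^{\ttt}$. The block matrix $\mathrm{diag}(1,B)\in\SL(n,\Z)$ then maps $v$ to $(a_1,d,0,\ldots,0)$. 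Since $v$ is primitive we have $\gcd(a_1,d)=1$, so the $n=2$ case applied to the first two coordinates (embedded as $\mathrm{diag}(A,I_{n-2})\in\SL(n,\Z)$) sends this vector to $e_1$, completing the induction.

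The only real subtlety is tracking the sign so that all matrices produced genuinely lie in $\SL$ rather than $\mathrm{GL}$, but the explicit formula in the $n=2$ case makes this automatic, and block matrices of determinant one cause no further trouble. I do not foresee a substantial obstacle; the whole argument is essentially a dressed-up version of the Euclidean algorithm.
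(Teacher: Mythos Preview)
Your proof is correct and follows essentially the same approach as the paper: the base case $n=2$ via B\'ezout, and the inductive step via block matrices in $\SL(n,\Z)$. The only organisational difference is that you apply the inductive hypothesis to the last $n-1$ coordinates at once and then finish with a $2\times 2$ step on the first two, whereas the paper iterates a $2\times 2$ reduction pairwise from the left (sending $(a,b,a_3,\ldots,a_n)$ to $(0,\gcd(a,b),a_3,\ldots,a_n)$ and repeating); your version targets $e_1$, the paper's targets $e_n$, but this is immaterial.
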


\begin{proof}
We begin by showing that any primitive vector $(a,b)\in\Z^2$
can be mapped to $(0,1)$ by an element of $\SL(2,\Z)$.
The vector $(a,b)$ being primitive means that $\gcd(a,b)=1$.
By B\'ezout's Lemma there are integers $b,d$ such that
$ad-bc=1$. Then the matrix
\[ \begin{pmatrix}
b & -a\\
d & -c
\end{pmatrix}\]
is in $\SL(2,\Z)$, and it sends $(a,b)^{\ttt}$ to $(0,1)^{\ttt}$.

For $n>2$, we reduce the problem inductively to $n=2$.
Let
\[ (a,b,a_3,\ldots,a_n)\in\Z^n \]
be a primitive vector.
We now find $b,d\in\Z$ such that $ad-bc=\gcd(a,b)$.
Then the matrix
\[ \begin{pmatrix}
b/\gcd(a,b) & -a/\gcd(a,b) & 0 & \cdots & 0\\
d           & -c          & 0 &        &  \\
0           &  0          & 1 &        &  \\
\vdots      &             &   & \ddots &  \\
0           &             &   &        & 1
\end{pmatrix} \] 
is in $\SL(n,\Z)$, and it sends $(a,b,a_3,\ldots,a_n)^{\ttt}$
to the primitive vector
\[ (0,\gcd(a,b),a_3,\ldots,a_n)^{\ttt}. \]
Now iterate this process.
\end{proof}
\subsection{Overtwisted structures from open books}
If we think of $T^3$ as $T^2\times S^1$, the considerations
in Section~\ref{section:open-book} give us a contact
structure $\xi_1$ admitting a Bott-integrable Reeb flow,
which by Lemma~\ref{lem:euler-xig} has Euler class
$e(\xi_1)=-2\PD[S^1]$.

Away from the collars of $\Sigma=\Sigma_1^0$ inside which we
perform the boundary parallel Dehn twists, the mapping torus
is a product, and the Reeb orbits coincide
with the (positively oriented) $S^1$-fibre. The two
binding components $\partial_{\pm}$ are also Reeb orbits. The boundary
orientation of $\partial_{\pm}$ coincides with the orientation of
the Reeb flow, but $[\partial_{\pm}\Sigma]=\pm[S^1]$,
as observed in Section~\ref{subsubsection:euler}.

Arguing as in Section~\ref{subsection:seifert-via-surgery},
we can choose the contact form and the Bott integral such that
by performing Lutz twists along $k$ Reeb orbits representing a
positive $S^1$-fibre, we can obtain Bott-integrable contact structures
$\xi_1^k$ realising any Euler class $e(\xi_1^k)=-2k\PD[S^1]$,
$k\in\N$; see Proposition~4.3.3 and Remark 4.3.4 in \cite{geig08}
for the effect of a single Lutz twist on the Euler class. Notice that
these structures are necessarily overtwisted.

Now, given any even class $e_0\in H^2(T^3;\Z)$, we can write
$e_0$ as a $2k$-fold multiple of a primitive class~$e_0'$.
Thanks to Proposition~\ref{prop:action}, we can find a
diffeomorphism of $T^3$ that pulls back $-\PD[S^1]$ to $e_0'$,
and hence $e(\xi_1^k)$ to~$e_0$. So this Euler class is represented
by a diffeomorphic image of~$\xi_1^k$.

This concludes the proof of Theorem~\ref{thm:S3T3} for the $3$-torus.

\begin{rem}
Alternatively, one may start with one of the tight contact structures
$\eta_n$ on $T^3$ and then introduce isolated elliptic Reeb orbits
in the direction of $\pm\partial_x$ and $\pm\partial_y$ by
the process described in Section~\ref{subsection:create-orbits}.
Lutz twists along such orbits then allow one to
realise any (even) Euler class $(a,b,0)\in \Z^3= H^2(T^3;\Z)$,
where $(1,0,0)=\PD[S^1\times *\times *]$ etc.

Now, given an even class $(a,b,c)\in H^2(T^3;\Z)$, it is straightforward
to write down the required transformation in $\SL(3,\Z)$. With $a',b'\in\Z$
chosen such that $aa'-bb'=\gcd(a,b)$, we have
\[ \begin{pmatrix}
a/\gcd(a,b) & 0 & b'\\
b/\gcd(a,b) & 0 & a'\\
0           & 1 & 0
\end{pmatrix}
\begin{pmatrix}
\gcd(a,b)\\
c\\
0
\end{pmatrix}=
\begin{pmatrix}
a\\
b\\
c
\end{pmatrix}.\]
\end{rem}
\section{Integrable Reeb flows on $S^1\times S^2$}
\label{section:S1S2}
Here we prove Theorem~\ref{thm:S3T3} for $S^1\times S^2$.
This manifold admits a unique tight contact
structure \cite[Theorem~4.10.1]{geig08}, which can be obtained by contact
$(+1)$-surgery on the Legendrian unknot in $(S^3,\xist)$ with
Thurston--Bennequin invariant~$-1$; see~\cite[Lemma~4.3]{dgs04}.
An algorithm developed by Stipsicz~\cite{stip05} translates this
into a supporting open book with page an annulus and trivial monodromy.
As in Section~\ref{section:open-book}, we find a contact form
with a Bott-integrable Reeb flow.

The Euler class of this tight contact structure is trivial.
By a full Lutz twist along an $S^1$-fibre, we obtain a Bott-integrable
overtwisted contact structure with trivial Euler class.
By performing simple Lutz twists along $S^1$-fibres, we can obtain
overtwisted contact structures (admitting a Bott-integrable Reeb flow)
realising the Euler classes $-2k\PD[S^1]$, $k\in\N$. The orientation-preserving
diffeomorphism of $S^1\times S^2$ given by $\theta\mapsto-\theta$
on $S^1$ and the antipodal map on $S^2$ gives us all positive even multiples
of $\PD[S^1]$. Finally, we change the $d_3$-invariant at will by
connected sums with the overtwisted contact structures on~$S^3$.
\section{Integrable Reeb flows containing singular Klein bottles}
\label{section:klein}
Prime $3$-manifolds admitting embedded Klein bottles are
relatively rare; see~\cite{geth}. For instance, one cannot
embed a Klein bottle into $S^3$, and the only lens spaces
containing embedded Klein bottles are the $L(4n,2n\pm 1)$.

In this section we show how to obtain integrable
Reeb flows whose Bott integral contains a Klein
bottle in its critical set. Also, by way of example,
we describe how to perturb the Bott integral so as
the make the Klein bottle disappear from the critical set.
\subsection{Constructing Klein bottles in the critical set}
As discussed in~\cite{geth}, the (closed) tubular neighbourhood
$\nu\calK$ of a Klein bottle $\calK$ embedded in an orientable
$3$-manifold can be described by
\[ \nu\calK=\bigl([0,1]\times [-1,1]\times S^1\bigr)/
(1,r,\theta)\sim(0,-r,-\theta),\]
with $\calK\subset\nu\calK$ given by $\{r=0\}$.
Notice that $\partial(\nu\calK)$ is a $2$-torus.
We write $t$ for the coordinate in the first factor,
the interval $[0,1]$. The flow of $\partial_t$
defines a Seifert fibration of $\nu\calK$
with two singular fibres of multiplicity~$2$,
and the quotient is $D^2(2,2)$, a disc with
two orbifold points of order~$2$.

As collar coordinates of the annulus $[-1,1]\times S^1$ near
$r=1$ we can take $(\rho_+,\theta_+)=(r-1,\theta)$;
near $r=-1$ we choose $(\rho_-,\theta_-)=(-r-1,-\theta)$.
This choice on either collar is consistent with the orientation
of the annulus defined by $\rmd r\wedge\rmd\theta$.
The map $(r,\theta)\mapsto (-r,-\theta)$ interchanges
$(\rho_+,\theta_+)$ and $(\rho_-,\theta_-)$. This gives
us well-defined collar coordinates $(t\,\mathrm{mod}\,1,\rho,\theta)$
on the quotient $\nu\calK$.

On $[0,1]\times [-1,1]\times S^1$ we can define the
contact form $\alpha:=\rmd t+r\,\rmd\theta$ with Reeb vector
field $R_{\alpha}=\partial_t$. This descends to the quotient
$\nu\calK$, and in the collar coordinates this
contact form is given by $\rmd t+(1+\rho)\,\rmd\theta$.
This is a Lutz form, with $t\,\mathrm{mod}\,1$ and $\theta$
the torus coordinates, and $\rho$ the transverse coordinate.

The function $f([t,r,\theta])=r^2$ is well defined on~$\nu\calK$,
and on the collar of $\nu\calK$ this function equals
$(1+\rho)^2$. This function is a Bott integral for~$R_{\alpha}$,
with $\calK=\{r=0\}$ as critical set.

As in Section~\ref{subsection:seifert-via-surgery},
where we described the extension of Bott-integrable
contact structures over solid tori glued in during the
process of Dehn surgery, one sees in the present situation
that there is a Bott-integrable extension of $\alpha$
to any Dehn filling of $\nu\calK$, i.e.\ any closed
$3$-manifold obtained by gluing a solid torus to
$\partial(\nu\calK)$. As discussed in \cite{geth},
amongst lens spaces precisely the
$L(4n,2n\pm 1)$ can be realised in this fashion.

An inspection of the proof of \cite[Lemma~4.4]{geth}
shows that in the case of the lens spaces $L(4n,2n\pm 1)$, the
extension of the contact form can be chosen in such a way
that the Reeb flow defines the Seifert fibration of
$L(4n,2n\pm 1)$ over $S^2(2,2)$, the $2$-sphere with
two orbifold points of order~$2$. The critical set
of the Bott integral can be arranged to consist of
$\calK$ and a single isolated periodic Reeb orbit,
namely, the spine of the solid torus making
up the Dehn filling.

A global description of this Reeb flow can be given
with the help of~\cite{gela21}. As shown there,
$L(4n,2n-1)$ can be described as the quotient of
$S^3\subset\C^2$ under the $\Z_{4n}$-action generated by
\[ (z_1,z_2)\longmapsto(\rme^{\pi\rmi/2n}\oz_2,
\rme^{-\pi\rmi/2n}\oz_1);\]
for $L(4n,2n+1)$, which is obtained by reversing
the global orientation, there is a similar description.
This action is equivariant with respect to the
anti-Hopf flow
\[ (z_1,z_2)\longmapsto(\rme^{\rmi\theta}z_1,
\rme^{-\rmi\theta}z_2).\]
The connection $1$-form
\[ \frac{\rmi}{2}(\rmd z_1\wedge\rmd\oz_1-
\rmd z_2\wedge\rmd\oz_2)\]
of the anti-Hopf fibration is a contact form that
descends to $S^3/\Z_{4n}$. The anti-Hopf flow
descends to the Reeb flow
of this contact form on $S^3/\Z_{4n}$, and this
is precisely the Reeb flow obtained from the
Dehn filling of~$\nu\calK$.

The $\Z_{4n}$-action also preserves the fibres of
the Hopf fibration, but the generator reverses the fibre
orientation. The Hopf fibration descends to a non-orientable
Seifert fibration of $L(4n,2n-1)$ over $\RP^2(n)$.
The Bott integral can be defined as the lift of
a radially symmetric function on $\RP^2(n)$ with an isolated
critical point in the orbifold point taken as the centre, and a
critical circle being the $\RP^1$ at infinity.
\subsection{Removing Klein bottles from the critical set}
We now show how to perturb the Bott integral so as
to remove the critical Klein bottle and introduce two
isolated critical Reeb orbits instead. This illustrates
the Reeb analogue of the genericity and perturbation
results of Kalashnikov~\cite{kala95} concerning
$4$-dimensional Hamiltonian systems.

We start with the function $f([t,r,\theta])=r^2$
on~$\nu\calK$ with a critical Klein bottle
$\calK=\{r=0\}$. For some small
$\varepsilon >0$, let $\chi\co[-1,1]\rightarrow
[0,\varepsilon^2]$ be a smooth function with
$\chi\equiv\varepsilon^2$ on the interval
$[-\varepsilon,\varepsilon]$, and $\chi\equiv 0$
on the intervals $[-1,-2\varepsilon]$ and
$[2\varepsilon,1]$. On the intervals
$[-2\varepsilon,\-\varepsilon]$ and
$[\varepsilon,2\varepsilon]$ we may assume
that $|\chi'(r)|<|2r|$.

Now replace the old $f$ by
\[ f([t,r,\theta]):=r^2+\chi(r)\cos\theta.\]
This function is still invariant under the flow
of $R_{\alpha}=\partial_t$, and the differential
\[ \rmd f=(2r+\chi'(r)\cos\theta)\,\rmd r-\chi(r)\sin\theta\,
\rmd\theta\]
vanishes only when $r=0$ and $\theta\in\{0,\pi\}$.
This describes precisely the two singular fibres
of the Seifert fibration $\nu\calK\rightarrow D^2(2,2)$.
The Hessian of $f$ at those critical points is
\[ \begin{pmatrix}
2 & 0\\
0 & -\varepsilon^2\cos\theta
\end{pmatrix}
=\begin{pmatrix}
2 & 0\\
0 & \mp\varepsilon^2
\end{pmatrix},\]
so we have created an elliptic Reeb orbit (along which $f$ is minimal)
and a hyperbolic one.
\begin{ack}
This paper was fostered by conversations with Marcelo
Alves and Otto van Koert, some of them at the Lorentz Center, Leiden,
during the 2022 workshop on Symplectic Dynamics Beyond Periodic Orbits. 
We thank Eva Miranda, Peter Albers, Simon Vialaret and Kai Zehmisch for
listening to and encouraging our developing ideas.

This work is part of a project in the
Son\-der\-for\-schungs\-be\-reich TRR 191
\textit{Symplectic Structures in Geometry, Algebra and Dynamics},
funded by the DFG (Projektnummer 281071066 -- TRR 191).
\end{ack}

\end{document}